\newtheorem{theorem}{Theorem}[section]
\newtheorem{lemma}[theorem]{Lemma}
\newtheorem{definition}[theorem]{Definiton}
\newtheorem{proposition}[theorem]{Proposition}
\newtheorem{corollary}[theorem]{Corollary}
\theoremstyle{definition}
\newenvironment{remark}
  {\pushQED{\qed}\remx}
  {\popQED\endremx}
\newsavebox\myboxA
\newsavebox\myboxB
\newlength\mylenA
\newcommand*\yoverline[2][0.75]{%
    \sbox{\myboxA}{$\m@th#2$}%
    \setbox\myboxB\null
    \ht\myboxB=\ht\myboxA%
    \dp\myboxB=\dp\myboxA%
    \wd\myboxB=#1\wd\myboxA
    \sbox\myboxB{$\m@th\overline{\copy\myboxB}$}
    \setlength\mylenA{\the\wd\myboxA}
    \addtolength\mylenA{-\the\wd\myboxB}%
    \ifdim\wd\myboxB<\wd\myboxA%
       \rlap{\hskip 0.5\mylenA\usebox\myboxB}{\usebox\myboxA}%
    \else
        \hskip -0.5\mylenA\rlap{\usebox\myboxA}{\hskip 0.5\mylenA\usebox\myboxB}%
    \fi}
\numberwithin{equation}{section}
\begin{document}





\newcommand{\diver}{\operatorname{div}}
\newcommand{\lin}{\operatorname{Lin}}
\newcommand{\curl}{\operatorname{curl}}
\newcommand{\ran}{\operatorname{Ran}}
\newcommand{\kernel}{\operatorname{Ker}}
\newcommand{\la}{\langle}
\newcommand{\ra}{\rangle}
\newcommand{\N}{\mathbb{N}}
\newcommand{\R}{\mathbb{R}}
\newcommand{\C}{\mathbb{C}}
\newcommand{\T}{\mathbb{T}}

\newcommand{\ld}{\lambda}
\newcommand{\fai}{\varphi}
\newcommand{\0}{0}
\newcommand{\n}{\mathbf{n}}
\newcommand{\uu}{{\boldsymbol{\mathrm{u}}}}
\newcommand{\UU}{{\boldsymbol{\mathrm{U}}}}
\newcommand{\buu}{\bar{{\boldsymbol{\mathrm{u}}}}}
\newcommand{\ten}{\\[4pt]}
\newcommand{\six}{\\[-3pt]}
\newcommand{\nb}{\nonumber}
\newcommand{\hgamma}{H_{\Gamma}^1(\OO)}
\newcommand{\opert}{O_{\varepsilon,h}}
\newcommand{\barx}{\bar{x}}
\newcommand{\barf}{\bar{f}}
\newcommand{\hatf}{\hat{f}}
\newcommand{\xoneeps}{x_1^{\varepsilon}}
\newcommand{\xh}{x_h}
\newcommand{\scaled}{\nabla_{1,h}}
\newcommand{\scaledb}{\widehat{\nabla}_{1,\gamma}}
\newcommand{\vare}{\varepsilon}
\newcommand{\A}{{\bf{A}}}
\newcommand{\RR}{{\bf{R}}}
\newcommand{\B}{{\bf{B}}}
\newcommand{\CC}{{\bf{C}}}
\newcommand{\D}{{\bf{D}}}
\newcommand{\K}{{\bf{K}}}
\newcommand{\oo}{{\bf{o}}}
\newcommand{\id}{{\bf{Id}}}
\newcommand{\E}{\mathcal{E}}
\newcommand{\ii}{\mathcal{I}}
\newcommand{\sym}{\mathrm{sym}}
\newcommand{\lt}{\left}
\newcommand{\rt}{\right}
\newcommand{\ro}{{\bf{r}}}
\newcommand{\so}{{\bf{s}}}
\newcommand{\e}{{\bf{e}}}
\newcommand{\ww}{{\boldsymbol{\mathrm{w}}}}
\newcommand{\zz}{{\boldsymbol{\mathrm{z}}}}
\newcommand{\U}{{\boldsymbol{\mathrm{U}}}}
\newcommand{\G}{{\boldsymbol{\mathrm{G}}}}
\newcommand{\VV}{{\boldsymbol{\mathrm{V}}}}
\newcommand{\II}{{\boldsymbol{\mathrm{I}}}}
\newcommand{\ZZ}{{\boldsymbol{\mathrm{Z}}}}
\newcommand{\hKK}{{{\bf{K}}}}
\newcommand{\f}{{\bf{f}}}
\newcommand{\g}{{\bf{g}}}
\newcommand{\lkk}{{\bf{k}}}
\newcommand{\tkk}{{\tilde{\bf{k}}}}
\newcommand{\W}{{\boldsymbol{\mathrm{W}}}}
\newcommand{\Y}{{\boldsymbol{\mathrm{Y}}}}
\newcommand{\EE}{{\boldsymbol{\mathrm{E}}}}
\newcommand{\F}{{\bf{F}}}
\newcommand{\spacev}{\mathcal{V}}
\newcommand{\spacevg}{\mathcal{V}^{\gamma}(\Omega\times S)}
\newcommand{\spacevb}{\bar{\mathcal{V}}^{\gamma}(\Omega\times S)}
\newcommand{\spaces}{\mathcal{S}}
\newcommand{\spacesg}{\mathcal{S}^{\gamma}(\Omega\times S)}
\newcommand{\spacesb}{\bar{\mathcal{S}}^{\gamma}(\Omega\times S)}
\newcommand{\skews}{H^1_{\barx,\mathrm{skew}}}
\newcommand{\kk}{\mathcal{K}}
\newcommand{\OO}{O}
\newcommand{\bhe}{{\bf{B}}_{\vare,h}}
\newcommand{\pp}{{\mathbb{P}}}
\newcommand{\ff}{{\mathcal{F}}}
\newcommand{\mWk}{{\mathcal{W}}^{k,2}(\Omega)}
\newcommand{\mWa}{{\mathcal{W}}^{1,2}(\Omega)}
\newcommand{\mWb}{{\mathcal{W}}^{2,2}(\Omega)}
\newcommand{\twos}{\xrightharpoonup{2}}
\newcommand{\twoss}{\xrightarrow{2}}
\newcommand{\bw}{\bar{w}}
\newcommand{\bz}{\bar{{\bf{z}}}}
\newcommand{\tw}{{W}}
\newcommand{\tr}{{{\bf{R}}}}
\newcommand{\tz}{{{\bf{Z}}}}
\newcommand{\lo}{{{\bf{o}}}}
\newcommand{\hoo}{H^1_{00}(0,L)}
\newcommand{\ho}{H^1_{0}(0,L)}
\newcommand{\hotwo}{H^1_{0}(0,L;\R^2)}
\newcommand{\hooo}{H^1_{00}(0,L;\R^2)}
\newcommand{\hhooo}{H^1_{00}(0,1;\R^2)}
\newcommand{\dsp}{d_{S}^{\bot}(\barx)}
\newcommand{\LB}{{\bf{\Lambda}}}
\newcommand{\LL}{\mathbb{L}}
\newcommand{\mL}{\mathcal{L}}
\newcommand{\mhL}{\widehat{\mathcal{L}}}
\newcommand{\loc}{\mathrm{loc}}
\newcommand{\tqq}{\mathcal{Q}^{*}}
\newcommand{\tii}{\mathcal{I}^{*}}
\newcommand{\Mts}{\mathbb{M}}
\newcommand{\pot}{\mathrm{pot}}
\newcommand{\tU}{{\widehat{\bf{U}}}}
\newcommand{\tVV}{{\widehat{\bf{V}}}}
\newcommand{\pt}{\partial}
\newcommand{\bg}{\Big}
\newcommand{\hA}{\widehat{{\bf{A}}}}
\newcommand{\hB}{\widehat{{\bf{B}}}}
\newcommand{\hCC}{\widehat{{\bf{C}}}}
\newcommand{\hD}{\widehat{{\bf{D}}}}
\newcommand{\fder}{\partial^{\mathrm{MD}}}
\newcommand{\Var}{\mathrm{Var}}
\newcommand{\pta}{\partial^{0\bot}}
\newcommand{\ptaj}{(\partial^{0\bot})^*}
\newcommand{\ptb}{\partial^{1\bot}}
\newcommand{\ptbj}{(\partial^{1\bot})^*}
\newcommand{\geg}{\Lambda_\vare}
\newcommand{\tpta}{\tilde{\partial}^{0\bot}}
\newcommand{\tptb}{\tilde{\partial}^{1\bot}}
\newcommand{\ua}{u_\alpha}
\newcommand{\pa}{p\alpha}
\newcommand{\qa}{q(1-\alpha)}
\newcommand{\Qa}{Q_\alpha}
\newcommand{\Qb}{Q_\eta}
\newcommand{\ga}{\gamma_\alpha}
\newcommand{\gb}{\gamma_\eta}
\newcommand{\ta}{\theta_\alpha}
\newcommand{\tb}{\theta_\eta}


\newcommand{\mH}{{E}}
\newcommand{\mN}{{N}}
\newcommand{\mD}{{\mathcal{D}}}
\newcommand{\csob}{\mathcal{S}}
\newcommand{\mA}{{A}}
\newcommand{\mK}{{Q}}
\newcommand{\mS}{{S}}
\newcommand{\mI}{{I}}
\newcommand{\tas}{{2_*}}
\newcommand{\tbs}{{2^*}}
\newcommand{\tm}{{\tilde{m}}}
\newcommand{\tdu}{{\phi}}
\newcommand{\tpsi}{{\tilde{\psi}}}
\newcommand{\Z}{{\mathbb{Z}}}
\newcommand{\tsigma}{{\tilde{\sigma}}}
\newcommand{\tg}{{\tilde{g}}}
\newcommand{\tG}{{\tilde{G}}}
\newcommand{\mM}{{M}}
\newcommand{\mC}{\mathcal{C}}
\newcommand{\wlim}{{\text{w-lim}}\,}
\newcommand{\diag}{L_t^\ba L_x^\br}
\newcommand{\vu}{ u}
\newcommand{\vz}{ z}
\newcommand{\vv}{ v}
\newcommand{\ve}{ e}
\newcommand{\vw}{ w}
\newcommand{\vf}{ f}
\newcommand{\vh}{ h}
\newcommand{\vp}{ \vec P}
\newcommand{\ang}{{\not\negmedspace\nabla}}
\newcommand{\dxy}{\Delta_{x,y}}
\newcommand{\lxy}{L_{x,y}}
\newcommand{\gnsand}{\mathrm{C}_{\mathrm{GN},3d}}
\newcommand{\wmM}{\widehat{{M}}}
\newcommand{\wmH}{\widehat{{E}}}
\newcommand{\wmI}{\widehat{{I}}}
\newcommand{\wmK}{\widehat{{Q}}}
\newcommand{\wmN}{\widehat{{N}}}
\newcommand{\wm}{\widehat{m}}
\newcommand{\ba}{\mathbf{a}}
\newcommand{\bb}{\mathbf{b}}
\newcommand{\br}{\mathbf{r}}
\newcommand{\bq}{\mathbf{q}}
\newcommand{\SSS}{\mathcal{S}}

\title{On long time behavior of the focusing energy-critical NLS on $\R^d\times\T$ via semivirial-vanishing geometry}
\author{Yongming Luo \thanks{Institut f\"{u}r Wissenschaftliches Rechnen, Technische Universit\"at Dresden, Germany} \thanks{\href{mailto:yongming.luo@tu-dresden.de}{Email: yongming.luo@tu-dresden.de}}
}

\date{}
\maketitle

\begin{abstract}
We study the focusing energy-critical NLS
\begin{align}\label{nls_abstract}
i\pt_t u+\Delta_{x,y} u=-|u|^{\frac{4}{d-1}} u\tag{NLS}
\end{align}
on the waveguide manifold $\R_x^d\times\T_y$ with $d\geq 2$. We reveal the somewhat counterintuitive phenomenon that despite the energy-criticality of the nonlinear potential, the long time dynamics of \eqref{nls_abstract} are purely determined by the semivirial-vanishing geometry which possesses an \textit{energy-subcritical} characteristic. As a starting point, we consider a minimization problem $m_c$ defined on the semivirial-vanishing manifold with prescribed mass $c$. We prove that for all sufficiently large mass the variational problem $m_c$ has a unique optimizer $u_c$ satisfying $\pt_y u_c=0$, while for all sufficiently small mass, any optimizer of $m_c$ must have non-trivial $y$-dependence. Afterwards, we prove that $m_c$ characterizes a sharp threshold for the bifurcation of finite time blow-up ($d=2,3$) and globally scattering ($d=3$) solutions of \eqref{nls_abstract} in dependence of the sign of the semivirial. To the author's knowledge, the paper also gives the first large data scattering result for focusing NLS on product spaces in the energy-critical setting.
\end{abstract}


\section{Introduction and main results}\label{sec:Introduction and main results}
We study the focusing energy-critical nonlinear Schr\"odinger equation (NLS)
\begin{align}\label{nls}
i\pt_t u+\Delta_{x,y} u=-|u|^{\frac{4}{d-1}} u
\end{align}
on the waveguide manifold $\R_x^d\times \T_y$ with $d\geq 2$ and $\T=\R/2\pi\Z$. The equation \eqref{nls} serves as a toy model in various physical applications such as nonlinear optics and Bose-Einstein condensation. For a more comprehensive introduction on the physical background of \eqref{nls}, we refer to \cite{waveguide_ref_1,waveguide_ref_2,waveguide_ref_3} and the references therein. From a mathematical point of view, the mixed type nature of the underlying domain also makes the analysis of \eqref{nls} rather challenging and interesting. In a previous paper \cite{Luo_inter} the so-called \textit{semivirial-vanishing geometry} has been introduced by the author to study the intercritical analogue of \eqref{nls}. The purpose of this paper is to reveal the interesting phenomenon that despite its \textit{energy-subcritical} characteristic, the framework of semivirial-vanishing geometry indeed continues to work for the energy-critical model \eqref{nls}.

In recent years, there has been an increasing interest in studying dispersive equations on compact manifolds and product spaces. Among all, we underline that the first result for energy-critical NLS on tori might date back to Herr, Tataru and Tzvetkov \cite{HerrTataruTz1}, where the local well-posedness of the quintic NLS on $\T^3$ was shown. As an application, the framework of $X^s$- and $Y^s$-spaces developed in \cite{HerrTataruTz1} was later invoked to obtain local well-posedness results for energy-critical NLS on other different manifolds such as the 4D product space $\R^d\times\T^{4-d}$ \cite{HerrTataruTz2} and Zoll manifolds \cite{HerrZoll}. By appealing to the concentration compactness arguments initiated by Kenig and Merle \cite{KenigMerle2006} and using the global well-posedness results for the defocusing energy-critical NLS on $\R^3$ and $\R^4$ \cite{defocusing3d,defocusing4d} as Black-Box-Theories, Ionescu, Pausader and Staffilani showed that the defocusing energy-critical NLS is alwalys globally well-posed on $\T^3$, $\R\times\T^3$ and on the three-dimensional hyperbolic space $\mathbb{H}^3$ \cite{Ionescu1,Ionescu2,hyperbolic}. Following the same ideas in \cite{Ionescu1,Ionescu2,hyperbolic}, a corresponding large data well-posedness result for focusing energy-critical NLS on $\T^4$ and $\R\times\T^3$ has been recently established by Yu, Yue and Zhao \cite{Haitian,YuYueZhao2021}. We also refer to \cite{BrezisGallouet1980,Bourgain1,Bourgain2,Burq1,Burq2,Burq3,ZhaoZheng2021} for further interesting results in this direction.

Although the large data well-posedness results are already satisfactory to certain extent, we are more interested in results concerning scattering or finite time blow-up phenomena which give a more accurate description on the long time dynamics of a solution. In general, however, the scattering results are more difficult to prove, as they demand a global control on the decay of the solutions. Moreover, while a solution of an NLS on $\R^d$ with nonlinearity of intercritical growth shall be scattering in time in the energy space under certain circumstances (e.g. small initial data), a solution on $\T^n$ does not scatter principally. The situation thus becomes more interesting when considering an NLS on the product space $\R^d\times\T^n$. We naturally ask whether the strong dispersion coming from the $\R^d$-side could ultimately guarantee the scattering of a solution of NLS on $\R^d\times\T^n$. Motivated by the scattering results of NLS on $\R^d$, we expect a solution to be scattering as long as
\begin{itemize}
\item[(i)] The nonlinearity is at most energy-critical w.r.t. the space dimension $d+n$.
\item[(ii)]The nonlinearity is at least mass-critical w.r.t. the space dimension $d$.
\end{itemize}
This particularly requires $n\in\{1,2\}$ (the case $n=0$ reduces to the well-known $\R^d$-case). The case $n=2$ is significantly more interesting and difficult than the case $n=1$, as in the case $n=2$ the nonlinearity is both mass- and energy-critical. The first breakthrough in this direction was made by Hani and Pausader \cite{HaniPausader}, where the authors studied the defocusing quintic NLS on the waveguide manifold $\R\times\T^2$. In particular, the authors proved suitable Strichartz estimates on $\R\times\T^2$ which are given in terms of the $X^s$- and $Y^s$-spaces introduced in \cite{HerrTataruTz1,HerrTataruTz2} and particularly provide sufficiently strong dispersion that guarantee the scattering of a solution in the energy space with small initial data. Indeed, by making use of the Black-Box-Theory initiated in \cite{Ionescu1,Ionescu2,hyperbolic} the authors were also able to prove that a solution of the defocusing quintic NLS on $\R\times\T^2$ is always global and scattering. We shall also point out that the large data scattering result in \cite{HaniPausader} was originally conditional and based on a conjecture concerning the large data scattering result of the corresponding large scale resonant system, which was later confirmed by \cite{R1T1Scattering}. By making use of the idea from \cite{HaniPausader} the large data scattering problems for defocusing NLS on waveguide manifolds with algebraic nonlinearities have been completely resolved, see \cite{HaniPausader,Yang_Zhao_2018,R1T1Scattering,CubicR2T1Scattering,RmT1,R2T2}. We shall also underline the interesting paper by Tzvetkov and Visciglia \cite{TzvetkovVisciglia2016}, where the defocusing intercritical analogue of \eqref{nls} on $\R^d\times\T$ was investigated. Instead of using the concentration compactness principle, the large data scattering result given in \cite{TzvetkovVisciglia2016} was proved by using the interaction Morawetz inequality originated in \cite{defocusing3d}. In particular, the scattering result given in \cite{TzvetkovVisciglia2016} is available for all $d\geq 1$ and all intercritical nonlinearities which are not necessarily algebraic. We also refer to \cite{TNCommPDE,FoprcellaHari2020,SystemProdSpace,ModifiedScattering,Barron,BarronChristPausader2021,Cheng_JMAA,ZhaoZheng2021,YuYueZhao2021,4NLS} for further interesting works on scattering problems of dispersive equations on product spaces.

Despite the abundant results on the defocusing NLS on product spaces, similar large data scattering results for the focusing model are less well-known. The first result\footnote{We shall point out that the large data scattering result of the corresponding large scale resonant system proved in \cite{Luo_Waveguide_MassCritical} was independently shown in \cite{similar_cubic}.} on large data scattering for focusing NLS on product space was recently given by the author \cite{Luo_Waveguide_MassCritical}, where the focusing cubic NLS on $\R^2\times\T$ was studied. A key new ingredient for showing the large data scattering result in \cite{Luo_Waveguide_MassCritical} was a specialized Gagliardo-Nirenberg inequality on $\R^2\times\T$ which is particularly scale-invariant w.r.t. the $x$-variable and differs from the standard inhomogeneous ones. Nonetheless, the scattering threshold formulated in \cite{Luo_Waveguide_MassCritical} might possibly be non-sharp, since we were unable to prove ground state solutions lying on the threshold and a criterium for a solution being blowing-up was also missing. The situation becomes much better when the nonlinearity is at least mass-supercritical. By introducing the so-called \textit{semivirial-vanishing geometry} \cite{Luo_inter} the author was able to formulate a threshold which determines the bifurcation of finite time blow-up and globally scattering solutions of the focusing intercritical NLS on $\R^d\times\T$ in dependence of the sign of the semivirial functional. Moreover, the given threshold is sharp due to the existence of ground state solutions lying on the threshold. Using a subtle scaling argument according to Terracini-Tzvetkov-Visciglia \cite{TTVproduct2014} we were also able to prove the existence of a critical value $c_*\in(0,\infty)$ which sharply determines the $y$-dependence of the ground state solutions in dependence of the size of the mass $c$.

In this paper we continue our study on the focusing energy-critical NLS \eqref{nls} by appealing to the framework of semivirial-vanishing geometry. As a starting point, we shall firstly introduce some basic concepts of the underlying theory.

\subsubsection*{Semivirial-vanishing geometry}
When considering an NLS on $\R^d$, a very important tool to study the long time dynamics of a solution is the celebrated Glassey's virial identity. Consider for instance a solution $u$ of the NLS
\begin{align}\label{example}
i\pt_t u+\Delta_x u=-|u|^\alpha u\quad\text{on $\R^d$}.
\end{align}
We then define the virial action functional $V_{\R^d}(t)$ by
$$V_{\R^d}(t):=\int_{\R^d}|x|^2|u(t,x)|^2\,dx.$$
In \cite{Glassey1977}, Glassey gave the following celebrated virial identity
$$ \frac{d^2}{dt^2}V_{\R^d}(t)=8\wmK(u(t)):=8\bg(\|\nabla_x u(t)\|_{L^2(\R^d)}^2-\frac{\alpha d}{2(\alpha+2)}\|u(t)\|_{L^{\alpha+2}(\R^d)}^{\alpha+2}\bg).$$
The quantity $\wmK(u)$ is usually referred to as the virial of the solution $u$. Hence a virial bounded above by some negative number shall ultimately lead to a finite time blow-up. On the contrary, a positive virial might be a sign indicating global well-posedness or even scattering of a solution. In fact, this can be rigorously proved under certain circumstances, see for instance \cite{weinstein,KenigMerle2006}. Motivated by these heuristics, when considering an NLS on $\R^d\times\T$ it is therefore natural to define the similar quantity $\int_{\R^d\times\T}|(x,y)|^2|u(t,x,y)|^2\,dxdy$. However, due to the boundedness of $\T$ the previously defined quantity is in fact less helpful for obtaining results concerning the long time dynamics of the NLS. Indeed, by calculating the second time derivative explicitly we shall see that there are some boundary integral terms remaining which can not be eliminated even by invoking the periodicity of the solution. Alternatively, we shall consider the quantity
$$V_{\R^d\times\T}(t):=\int_{\R^d\times\T}|x|^2|u(t,x,y)|^2\,dxdy.$$
By formally taking the second time derivative we arrive at
$$ \frac{d^2}{dt^2}V_{\R^d\times\T}(t)=8\mK(u(t)):=8\bg(\|\nabla_x u(t)\|_{L^2(\R^d\times\T)}^2-\frac{\alpha d}{2(\alpha+2)}\|u(t)\|_{L^{\alpha+2}(\R^d\times\T)}^{\alpha+2}\bg).$$
We shall simply refer $\mK(u)$ to as the \textit{semivirial} functional. At the first glance, the way we define $\mK(u)$ is purely due to the issue that $\T$ is bounded. However, since generally we do not expect a nonlinear Schr\"odiger wave to be scattering along the torus side, it seems reasonable to consider the dispersive effects that are purely provided by the $\R^d$-side. Nevertheless, from Theorem \ref{thm threshold mass} given below we shall see that albeit such heuristics might be true for solutions with large mass, in the small mass case the impact from the torus side must be taken into account in a non-trivial way. From now on we focus on the problem \eqref{nls} and set
$$ \mK(u)=\|\nabla_x u\|_{L^2(\R^d\times\T)}^2-\frac{d}{d+1}\|u\|_{L^{2+4/(d-1)}(\R^d\times\T)}^{2+4/(d-1)}.$$
Motivated by Jeanjean's seminal work \cite{Jeanjean1997} we define the variational problem $m_c$ on the semivirial-vanishing manifold with prescribed mass $c$ by
\begin{align}
m_c:=\inf_{u\in H^1(\R^d\times\T)}\{\mH(u):\mM(u)=c,\,\mK(u)=0\},\label{mc first def}
\end{align}
where $\mM(u)$ and $\mH(u)$ denote the usual mass and energy of a function $u$:
\begin{align*}
\mM(u):=\|u\|_{L^2(\R^d\times\T)}^2,\quad
\mH(u):=\frac{1}{2}\|\nabla_{x,y}u\|_{L^2(\R^d\times\T)}^2
-\frac{d-1}{2(d+1)}\|u\|_{L^{2+4/(d-1)}(\R^d\times\T)}^{2+4/(d-1)}.
\end{align*}

\subsubsection*{Main results}
In order to formulate our main results, we firstly fix some notation. For a function $u\in H^1(\R^d)$, let the quantities $\wmM(u)$, $\wmH(u)$ and $\wmK(u)$ be the mass, energy and virial of $u$ defined by \eqref{def of wmM}, \eqref{def of wmH} and \eqref{def of wmK} below respectively. For $c\in(0,\infty)$ define also the variational problem $\wm_c$ by
\begin{align}
\wm_c:=\inf_{u\in H^1(\R^d)}\{\wmH(u):\wmM(u)=c,\,\wmK(u)=0\}.\label{wmc first def}
\end{align}
Our first main result reveals the relation between the quantities $m_c$ and $\wm_c$.
\begin{theorem}[$y$-dependence of the ground states]\label{thm threshold mass}
Let $m_c$ and $\wm_c$ be the quantities defined by \eqref{mc first def} and \eqref{wmc first def} respectively. Then there exists some $c_*\in(0,\infty)$ such that
\begin{itemize}
\item[(i)] For all $c\in(0,c_*)$ we have $m_{c}<2\pi \wm_{(2\pi)^{-1}c}$.
\item[(ii)] For all $c\in[c_*,\infty)$ we have $m_{c}=2\pi \wm_{(2\pi)^{-1}c}$. Moreover, if $c\in(c_*,\infty)$, then any minimizer $u_c$ of $m_{c}$ must satisfy $\pt_y u_c=0$.
\end{itemize}
\end{theorem}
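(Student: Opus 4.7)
The bound $m_c \le 2\pi\wm_{c/(2\pi)}$ is immediate: for any $v \in H^1(\R^d)$ with $\wmM(v) = c/(2\pi)$ and $\wmK(v) = 0$, the $y$-independent lift $u(x,y) := v(x)$ is admissible for $m_c$ with $\mH(u) = 2\pi\wmH(v)$. Minimizing over such $v$ gives the inequality. I would then introduce
\[
c_* := \inf\bigl\{c > 0 : m_c = 2\pi\wm_{c/(2\pi)}\bigr\}
\]
(with the convention $\inf\emptyset = +\infty$) and aim to prove $c_* \in (0,\infty)$ together with the $y$-independence claim.

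\textbf{Key scaling identity and upward monotonicity.} The workhorse is the two-parameter family $u_{\mu,\lambda}(x,y) := \mu\,u(\lambda x,y)$; the distinguished choice $\mu = \lambda^{(d-1)/2}$ enforces $\mK(u_{\mu,\lambda}) = \lambda\mK(u)$, hence preserves the semivirial-vanishing manifold. A direct computation yields the mass transformation $\mM(u_{\mu,\lambda}) = \lambda^{-1}\mM(u)$ and the energy identity
\[
\mH(u_{\mu,\lambda}) = \lambda\,\mH(u) + \tfrac{1}{2}\bigl(\lambda^{-1}-\lambda\bigr)\|\pt_y u\|_{L^2(\R^d\times\T)}^2.
\]
A parallel scaling on $\R^d$ gives $\wm_c = K/c$ for an explicit constant $K$, so $2\pi\wm_{c_1/(2\pi)} = \lambda\cdot 2\pi\wm_{c_0/(2\pi)}$ whenever $c_1 = \lambda^{-1}c_0$. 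If now $m_{c_0} < 2\pi\wm_{c_0/(2\pi)}$, then every minimizer $u_0$ must be genuinely $y$-dependent (otherwise it would already saturate the trivial bound), so $\|\pt_y u_0\|_{L^2} > 0$. Taking $\lambda > 1$ gives $m_{c_1} \leq \mH((u_0)_{\mu,\lambda}) < \lambda\mH(u_0) = 2\pi\wm_{c_1/(2\pi)}$, i.e.\ strict inequality propagates to smaller $c$. Equivalently, the equality set is upward-closed. The same identity, applied to any putative $y$-dependent minimizer at a mass $c_0 > c_*$ and rescaled down to some $c_1 \in (c_*,c_0)$ where equality holds, furnishes the contradiction that delivers the $y$-independence statement for $c > c_*$.

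\textbf{Lower bound $c_* > 0$ via Aubin--Talenti.} For small $c$ I would construct a genuinely $y$-dependent competitor from the Aubin--Talenti extremizer $W$ on $\R^{d+1}$. After an amplitude rescaling that enforces $\mK(W) = 0$, set $W_\Lambda(x,y) := \Lambda^{(d-1)/2}W(\Lambda x,\Lambda y)$, with a cutoff when $d \in \{2,3\}$ (since $W \notin L^2$ there). The critical $(x,y)$-scaling keeps $\mH(W_\Lambda)$ uniformly bounded in $\Lambda$, whereas $\mM(W_\Lambda) = O(\Lambda^{-2})$ (up to logarithmic corrections when $d \in \{2,3\}$). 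Combined with $2\pi\wm_{c/(2\pi)} \sim c^{-1}\to\infty$ as $c\to 0^+$, this yields $m_c < 2\pi\wm_{c/(2\pi)}$ for all sufficiently small $c$, hence $c_* > 0$.

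\textbf{Upper bound $c_* < \infty$ --- the main obstacle.} This is the hardest step, and I would argue it by contradiction: suppose $m_c < 2\pi\wm_{c/(2\pi)}$ for every $c$, so that each $m_c$ admits a genuinely $y$-dependent minimizer $u_c$, and study the family $\{u_c\}$ as $c\to\infty$. Following the Terracini--Tzvetkov--Visciglia strategy \cite{TTVproduct2014} adapted to the energy-critical setting as in \cite{Luo_inter}, one applies a profile decomposition / concentration-compactness analysis: mass-supercriticality of the $\R^d$-slice should force $x$-concentration of the profiles, while the energy-critical character of the nonlinearity allows Aubin--Talenti-type bubbles that must be excluded by means of the trivial upper bound. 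The crux is ruling out all non-compact scenarios and showing that the $y$-dependence of $u_c$ vanishes in the limit, contradicting the standing hypothesis. Once $c_* < \infty$ is in hand, the rescaling argument of step 2 simultaneously delivers the equality $m_c = 2\pi\wm_{c/(2\pi)}$ for all $c \in [c_*,\infty)$ and the $y$-independence of every minimizer for $c > c_*$.
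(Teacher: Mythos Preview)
Your scaling framework (steps 1--2) is essentially the paper's approach in disguise: the paper passes through the auxiliary problem $m_{1,\lambda}$ via the identity $m_c = c^{-1}m_{1,c^2}$, and your operator $u_{\mu,\lambda}$ with $\mu = \lambda^{(d-1)/2}$ is exactly the paper's $T_\lambda$. One caveat: your monotonicity argument invokes a \emph{minimizer} $u_0$ at mass $c_0$, but existence of minimizers is only established in the paper for large mass. This is easily repaired by working with near-minimizers (any admissible $u_0$ with $\mH(u_0)<2\pi\wm_{c_0/(2\pi)}$ is automatically $y$-dependent, and the same scaling inequality goes through), so the propagation of strict inequality to smaller masses is fine.

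For $c_*>0$ you use Aubin--Talenti bubbles, whereas the paper takes a more elementary route: it builds a product test function $\psi(x,y)=\rho(y)P(x)$ with a piecewise-linear $\rho$ on $\T$ chosen so that $\|\rho\|_{L^2_y}^2=\|\rho\|_{L^{2+4/(d-1)}_y}^{2+4/(d-1)}<2\pi$, and $P$ a suitable $\R^d$-ground state; this keeps $\mK(\psi)=0$ exactly and avoids all cutoff/error analysis. Your bubble construction should also work (and indeed the paper uses it later for a different purpose, Lemma~4.4), but it is technically heavier.

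The genuine gap is in step 4. Saying the $y$-dependence ``vanishes in the limit'' is not enough to produce a contradiction, and profile decomposition plus bubble exclusion is not what closes the argument. The paper's mechanism is sharper and PDE-based: (i) prove that for large $\lambda$ the problem $m_{1,\lambda}$ admits a positive minimizer $u_\lambda$ solving $-\Delta_x u_\lambda-\lambda\partial_y^2 u_\lambda+\beta_\lambda u_\lambda=|u_\lambda|^{4/(d-1)}u_\lambda$; (ii) show $u_\lambda\to u$ strongly in $H^1_{x,y}$ with $u$ the $\R^d$-ground state; (iii) differentiate the Euler--Lagrange equation in $y$, test against $w_\lambda:=\partial_y u_\lambda$, and exploit the zero-mean Poincar\'e inequality $\|w_\lambda\|_2\le\|\partial_y w_\lambda\|_2$ together with strong convergence to obtain a coercive lower bound
\[
0 \;\ge\; (\lambda - C)\|\partial_y w_\lambda\|_2^2 \;+\; (1-o_\lambda(1))\|w_\lambda\|_{H^1_{x,y}}^2,
\]
which forces $w_\lambda\equiv 0$ \emph{exactly} for all sufficiently large $\lambda$. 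Once minimizers are exactly $y$-independent, equality $m_{1,\lambda}=2\pi\wm_{(2\pi)^{-1}}$ is immediate, and $c_*<\infty$ follows. Your outline should be replaced by this concrete coercivity step.
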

Here follow several comments on Theorem \ref{thm threshold mass}:
\begin{itemize}
\item[(i)]Notice by assuming a function $u\in H^1(\R^d\times\T)$ with $\mK(u)=0$ is independent of $y$ we see that $u\in H^1(\R^d)$ and $\wmK(u)=0$. Consequently we infer that $m_c\leq 2\pi\wm_{(2\pi)^{-1}c}$. In particular, from Theorem \ref{thm threshold mass} (ii) it is immediate that for $c\in(c_*,\infty)$ the optimizers of $m_c$ coincide with the ones of $\wm_{(2\pi)^{-1}c}$. By a classical result from \cite{Hajaiej_Stuart_2004}, the set of optimizers of $\wm_{c}$ is given by
    $$ \mathcal{Z}_c=\{e^{i\theta}P_c(\cdot+y):\theta\in\R,\,y\in\R^d\},$$
where $P_c\in H^1(\R^d)$ is the unique radially symmetric and positive solution of
\begin{align*}
-\Delta_x P_c+\omega_c P_c=P_c^{\frac{d+3}{d-1}}\quad\text{on $\R^d$}
\end{align*}
with $\wmM(P_c)=c$ and some positive $\omega_c>0$ which is uniquely determined by the mass $c$. Hence up to symmetries, the ground states of $m_c$ for $c\in(c_*,\infty)$ are unique and independent of $y$.

\item[(ii)] Later we shall use the quantity $m_c$ to formulate a threshold for determining scattering and finite time blow-up solutions (Theorem \ref{main thm} and \ref{thm blow up}). While Theorem \ref{thm threshold mass} (ii) tells us that in the large mass case the balance point of linear dispersion and nonlinear effect is attained at the $\R^d$-ground states, Theorem \ref{thm threshold mass} (i) indicates the interesting fact that in the small mass case, the threshold must display certain non-trivial $y$-dependence.
\end{itemize}
The proof of Theorem \ref{thm threshold mass} follows the same strategy given in \cite{TTVproduct2014,Luo_inter}: For $\ld>0$ we introduce the rescaled energy $\mH_{\ld}(u)$ defined by \eqref{def modified energy} below and consider the minimization problem
\begin{align*}
m_{1,\ld}:=\inf_{u\in H^1(\R^d\times\T)}\{\mH_{\ld}(u):\mM(u)=1,\,\mK(u)=0\}.
\end{align*}
By appealing to a simple rescaling argument, proving Theorem \ref{thm threshold mass} is essentially equivalent to showing the statements
\begin{align*}
\lim_{\ld\to \infty}m_{1,\ld}=2\pi\wm_{(2\pi)^{-1}},\quad\lim_{\ld\to 0}m_{1,\ld}<2\pi\wm_{(2\pi)^{-1}}.
\end{align*}
While the statement $\lim_{\ld\to 0}m_{1,\ld}<2\pi\wm_{(2\pi)^{-1}}$ can be proved by constructing some special test functions, the proof of the statement $\lim_{\ld\to \infty}m_{1,\ld}=2\pi\wm_{(2\pi)^{-1}}$ relies on some subtle coercivity arguments given by \cite{TTVproduct2014}. We shall also point out that despite the proof of Theorem \ref{thm threshold mass} is almost identical to the one given for \cite[Thm 1.2]{Luo_inter}, we encounter the new difficulty that the concentration compactness arguments given by \cite{TTVproduct2014} for identifying a
non-vanishing weak limit of a minimizing sequence of $m_c$ (resp. $m_{1,\ld}$) are no longer valid in the energy-critical setting. The key observation here is that for any minimizing sequence $(u_n)_n$ with sufficiently large mass (resp. large $\ld$), we are able to prove the fact that the zero Fourier coefficients $m(u_n):=(2\pi)^{-1}\int_{\T}u_n \,dy$ of $u_n$ w.r.t $y$ will be concentrating as $n\to\infty$. Since $m(u_n)$ is independent of $y$, we are then able to invoke the classical concentration compactness arguments applied on $\R^d$ to infer a non-vanishing weak limit of $u_n$, as desired. Again, the proof of the concentration effect of $m(u_n)$ relies on a subtle scale-invariant Gagliardo-Nirenberg inequality on $\R^d\times\T$ which follows the same fashion as the ones given in \cite{Luo_Waveguide_MassCritical,Luo_inter}. It remains an open question whether ground states optimizers of $m_c$ exist for $c\in(0,c_*)$. We underline that the proof of showing the existence of ground states with large mass (Proposition \ref{thm existence of ground state 1}) in fact works equally well for $m_c$ with arbitrary $c\in(0,\infty)$ as long as we know that a minimizing sequence has a non-vanishing weak limit. In general we conjecture that $m_c$ has no ground state optimizers when at least $c$ is sufficiently small.

Finally, we prove that the quantity $m_c$ characterizes a sharp threshold for the bifurcation of scattering and finite time blow-up solutions in dependence of the sign of the semivirial.

\begin{theorem}[Scattering below threshold]\label{main thm}
Let $d=3$ and let $u\in X_{c,\mathrm{loc}}(I)$ be a solution of \eqref{nls} with maximal lifespan $I\subset\R$ containing zero, where the space $X_{c,\mathrm{loc}}(I)$ is defined by \eqref{def of Xcloc}. Assume that
$$\mH(u)<m_{\mM(u)}\quad\text{and}\quad\mK(u(0))>0.$$
Then $u$ is a global and scattering in time solution of \eqref{nls} in the sense that there exist $\phi^\pm\in H^1(\R^3\times\T)$ such that
\begin{align}
\lim_{t\to\pm\infty}\|u(t)-e^{it\Delta_{x,y}}\phi^{\pm}\|_{H^1(\R^3\times\T)}=0.
\end{align}
\end{theorem}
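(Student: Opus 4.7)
The plan is to implement the Kenig-Merle concentration-compactness and rigidity roadmap adapted to the waveguide geometry, in the spirit of \cite{Ionescu1,HaniPausader,Haitian,YuYueZhao2021}. The first step is variational: arguing as in \cite{Luo_inter}, I would show that the open set
\begin{align*}
\mathcal{A}:=\{v\in H^1(\R^3\times\T):\mH(v)<m_{\mM(v)},\ \mK(v)>0\}
\end{align*}
is invariant under the flow of \eqref{nls}, gives a uniform a priori bound on $\|u(t)\|_{H^1}$, and carries a quantitative coercivity estimate $\mK(u(t))\geq \delta>0$ depending only on the deficit $m_{\mM(u)}-\mH(u)$. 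Together with a local well-posedness and stability theory in the critical Herr-Tataru-Tzvetkov spaces $X^1$, $Y^1$ adapted to $\R^3\times\T$, this extends the maximal interval $I$ to all of $\R$ and reduces the scattering statement to a finiteness bound on the critical Strichartz norm.

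Assume by contradiction the conclusion fails and set $E_c:=\sup\{E_0:\text{every }u\in\mathcal{A}\text{ with }\mH(u)\leq E_0\text{ scatters}\}$. A standard Kenig-Merle bootstrap combined with a linear profile decomposition on $\R^3\times\T$ produces a minimal non-scattering (critical) element $u_*\in\mathcal{A}$ with $\mH(u_*)=E_c$, whose forward orbit is pre-compact in $H^1$ modulo phase rotations and spatial translations. The decisive technical ingredient is the profile decomposition itself, following the template of \cite{HaniPausader,Ionescu2,HerrTataruTz2}: up to subsequences and small remainders, any bounded sequence in $H^1(\R^3\times\T)$ splits into (a) small-scale Euclidean profiles $\lambda_n\phi(\lambda_n(x-x_n),\lambda_n(y-y_n))$ with $\lambda_n\to\infty$, which at leading order solve the focusing energy-critical cubic NLS on $\R^4$; (b) large-scale $y$-averaged profiles that at leading order solve the focusing intercritical cubic NLS on $\R^3$; and (c) unit-scale waveguide profiles on $\R^3\times\T$.

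Profiles of types (a) and (b) are handled by Euclidean black-box theories. For (a) I would invoke the Kenig-Merle scattering theorem for focusing cubic NLS on $\R^4$ below the Aubin-Talenti ground state $W$; the required variational gap $m_{\mM(u)}<\tfrac14\|\nabla W\|_{L^2(\R^4)}^2$ follows because $W$ has infinite $L^2$-mass and hence cannot be approached along any mass-constrained minimizing sequence on the semivirial-vanishing manifold. For (b) I would invoke the Holmer-Roudenko type scattering theorem for focusing cubic NLS on $\R^3$ below its ground-state threshold $\wm_{(2\pi)^{-1}\mM(u)}$; the needed inequality $\mH(u)<2\pi\,\wm_{(2\pi)^{-1}\mM(u)}$ together with $\mK(u)>0$ is precisely the dynamical lift of Theorem \ref{thm threshold mass}. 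Each such profile therefore scatters on its own, and an orthogonality/stability argument reduces the critical element to a single unit-scale profile. The rigidity step then rules out $u_*$: from the pre-compactness of the orbit and conservation of momentum one obtains $|x(t)|=o(t)$ for the spatial center $x(t)$, and the truncated virial identity
\begin{align*}
\frac{d^2}{dt^2}\int_{\R^3\times\T}\chi_R(x-x(t))\,|x|^2\,|u_*(t,x,y)|^2\,dx\,dy = 8\mK(u_*(t))+o_R(1),
\end{align*}
together with the uniform coercivity $\mK(u_*(t))\geq\delta>0$, forces the truncated virial to grow like $\delta\, t^2$, contradicting its uniform bound of order $R^2\mM(u_*)$ for $R$ fixed but large.

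The main obstacle is step (a): because the nonlinearity is genuinely $H^1$-critical for the ambient $4$D dimension, bubbling at scale $\lambda_n\to\infty$ cannot be discarded by soft variational arguments and must be neutralized by combining the $\R^4$ black box with both a sharp quantitative gap between $m_c$ and the Aubin-Talenti energy and a delicate nonlinear approximation lemma embedding $\R^4$-solutions into $X^1(\R^3\times\T)$ in the style of \cite{HaniPausader}. Controlling the error of this embedding, and showing that the profile decomposition closes with the correct Pythagorean-type identities for mass, energy and semivirial so that the critical element can indeed be extracted, is where the main technical effort of the proof must concentrate.
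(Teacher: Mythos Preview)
Your roadmap—Kenig--Merle concentration-compactness, profile decomposition, Euclidean black box, localized virial rigidity—is the same as the paper's. Two structural corrections: your type (b) ``large-scale $y$-averaged profiles'' do not occur in this energy-critical setting; the linear profile decomposition on $\R^3\times\T$ (Lemma \ref{linear profile}) produces only Euclidean frames with $N_n\to\infty$ (governed by cubic NLS on $\R^4$) and scale-one frames with $N_n\equiv 1$, so the Holmer--Roudenko $\R^3$ black box is never invoked. Also, the paper sets up the induction via a two-parameter mass-energy indicator $\mD(\mM(u),\mH(u))$ rather than your energy-only threshold $E_c$, because the constraint $\mH<m_{\mM}$ couples mass and energy and the Step 1 argument showing $\mK(T_n^j\phi^j)>0$ for each profile genuinely uses the mass decomposition together with the monotonicity of $c\mapsto m_c$.

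The substantive gap is in your handling of the Euclidean profiles. The condition $\mK(T_n^j\phi^j)>0$ inherited from $u\in\mathcal A$ controls only the \emph{partial} kinetic energy $\|\nabla_x T_n^j\phi^j\|_2^2$, whereas Dodson's theorem on $\R^4$ requires $\|\nabla_{\R^4}\phi^j\|_{L^2(\R^4)}^2<\SSS^2$, i.e.\ positivity of the \emph{full} virial $\mK^*$. Your heuristic that $W\notin L^2$ yields only the non-strict bound $m_c\leq\SSS^2/4$ (Lemma \ref{lemma refined uniform bound}, proved via truncated Aubin--Talenti bubbles), which is necessary but not sufficient for the upgrade. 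The paper's actual argument (Step 2 of Lemma \ref{Palais Smale}) is a contradiction: assuming $\mK^{**}(T_n^j\phi^j)\leq 0$, one uses the anisotropic rescaling $T_\ld u(x,y)=\ld u(\ld x,y)$ and a monotonicity analysis of $\ld\mapsto\mH(T_\ld T_n^j\phi^j)$ to produce a rescaled function with $\mK^{**}=0$ and energy at most $\mH_0$, then applies the sharp Sobolev inequality on $\R^3\times\T$ (Corollary \ref{r1t3 sobolev lemma cor}) together with the vanishing mass $\|T_n^j\phi^j\|_2\to 0$ to force $\mH_0\geq\SSS^2/4$, contradicting $\mH_0<m_{\mM_0}\leq\SSS^2/4$. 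A second subtlety you do not address: when $|t_n^j(N_n^j)^2|\to\infty$, the group $e^{it_n^j\Delta_{x,y}}$ does not preserve the $L^4$-norm, and one must separately establish $\|T_n^j\phi^j\|_4\to 0$ by combining the dispersive estimate on $\R^3$ with the Sobolev inequality on $\T$ to recover a missing $(N_n^j)^{-1/4}$ decay. These two mechanisms are the technical heart of the proof and are not captured by ``a delicate nonlinear approximation lemma.''
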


\begin{theorem}[Finite time blow-up below threshold]\label{thm blow up}
Let $d\in\{2,3\}$ and let $u\in X_{c,\mathrm{loc}}(I)$ be a solution of \eqref{nls} with maximal lifespan $I\subset\R$ containing zero. Assume that
$$|x|u(0)\in L^2(\R^d\times\T)\,\wedge\,\mH(u)<m_{\mM(u)}\,\wedge\,\mK(u(0))<0.$$
Then $u$ blows-up in finite time.
\end{theorem}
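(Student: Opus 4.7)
The strategy is a Glassey-style variance argument restricted to the Euclidean direction, which is the direction in which the semivirial $\mK$ is the second variance derivative. Setting
$$V(t) := \int_{\R^d\times\T} |x|^2 |u(t,x,y)|^2 \, dx\, dy,$$
the assumption $|x|u(0)\in L^2$ should give, by a standard truncation/mollification argument, the propagation $|x|u(t)\in L^2$ on $I$ together with the identity $V''(t)=8\mK(u(t))$. Finite-time blow-up will then follow once a uniform upper bound $\mK(u(t))\leq -\delta<0$ is established, because integrating $V''\leq -8\delta$ twice contradicts $V\geq 0$ if $I$ is unbounded.

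The first step is a sign-invariance: the set $\{v\in H^1(\R^d\times\T):\mH(v)<m_{\mM(v)},\ \mK(v)<0\}$ is invariant under the flow. Indeed, if $\mK(u(t_*))=0$ at some first $t_*\in I$, then $u(t_*)$ becomes admissible in \eqref{mc first def}, forcing $\mH(u_0)=\mH(u(t_*))\geq m_{\mM(u_0)}$ and contradicting the hypothesis. The second step upgrades this to the quantitative estimate
$$\mK(u(t))\leq \tfrac{2d}{d-1}\bigl(\mH(u_0)-m_{\mM(u_0)}\bigr) =: -\delta < 0.$$
For this I would introduce the $x$-scaling $u^\lambda(x,y):=\lambda^{d/2}u(\lambda x,y)$, which preserves $\mM$, together with the combined functional
$$J(u) := \mH(u)-\tfrac{d-1}{2d}\mK(u) = \tfrac{1}{2d}\|\nabla_x u\|_{L^2}^2+\tfrac{1}{2}\|\pt_y u\|_{L^2}^2,$$
the second equality arising from the exact cancellation of the two $L^{(2d+2)/(d-1)}$-norm contributions. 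Since $\mK(u(t))<0$, the continuous map $\lambda\mapsto\mK(u(t)^\lambda)$, which is positive near $\lambda=0$ and equals $\mK(u(t))<0$ at $\lambda=1$, crosses zero at a unique $\lambda_0(t)\in(0,1)$, so $u(t)^{\lambda_0(t)}$ is admissible in \eqref{mc first def}. The strict monotonicity of $\lambda\mapsto J(u(t)^\lambda)$ then yields
$$m_{\mM(u_0)}\leq \mH(u(t)^{\lambda_0(t)}) = J(u(t)^{\lambda_0(t)}) < J(u(t)) = \mH(u_0)-\tfrac{d-1}{2d}\mK(u(t)),$$
which rearranges to the claimed bound.

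I expect the main obstacle to lie in the rigorous verification of the variance identity and the persistence of the weighted norm $\||x|u(t)\|_{L^2}$ on the low-regularity solution class $X_{c,\mathrm{loc}}(I)$ adapted to the energy-critical nonlinearity on $\R^d\times\T$; the usual truncation procedure must be compatible with the $U^p$-/$V^p$-type Strichartz calculus underlying that space. The trapping/variational portion, by contrast, is a clean adaptation of the scheme already employed by the author in the intercritical regime \cite{Luo_inter} and transfers essentially verbatim to the energy-critical setting, precisely because the semivirial $\mK$ is tied to the $x$-direction $H^1$-scaling, which is $L^2$-supercritical but $H^1$-subcritical for the $L^{(2d+2)/(d-1)}$ nonlinearity.
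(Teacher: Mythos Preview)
Your proposal is correct and follows essentially the same route as the paper: Glassey's $x$-variance $V(t)$, the identity $V''=8\mK$, sign-invariance of $\{\mK<0\}$ under the flow via the constraint $\mK=0$ in the definition of $m_c$, and a quantitative trapping bound obtained by rescaling to $\mK=0$. The only minor difference is that the paper derives the trapping via the integral inequality $\mH(\phi)\geq \mH(\phi^{t^*})+(1-t^*)\mK(\phi)$ (yielding $\mK(\phi)\leq \mH(\phi)-m_{\mM(\phi)}$), whereas your use of the monotonicity of $J(u^\lambda)=\mH(u^\lambda)-\tfrac{d-1}{2d}\mK(u^\lambda)$ gives the sharper constant $\tfrac{2d}{d-1}$; both are standard and suffice, and your identification of the weighted-norm persistence as the only non-variational technicality matches the paper's deferral to \cite[Prop.~6.5.1]{Cazenave2003}.
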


The proofs of Theorem \ref{main thm} and \ref{thm blow up} rely on the classical concentration compactness arguments initiated by Kenig and Merle \cite{KenigMerle2006} and the virial arguments by Glassey \cite{Glassey1977} respectively. The restriction $d<4$ is due to the fact that at the moment the local well-posedness results and Strichartz estimates for \eqref{nls} in higher dimensions ($d\geq 4$) are still open problems (we encounter the difficulty that the nonlinearity is no longer algebraic in higher dimensions). Moreover, the further restriction $d=3$ given in Theorem \ref{main thm} is attributed to the fact that the celebrated Kenig-Merle large data scattering result for focusing quintic NLS on $\R^3$ \cite{KenigMerle2006} is only known to hold for radial initial data. Nevertheless, Theorem \ref{main thm} extends straightforwardly to the case $d=2$ when the corresponding Black-Box-Theory on $\R^3$ is also available for non-radial initial data, which is widely believed to be true.

We shall also emphasize that the main difficulty for proving Theorem \ref{main thm} is to embed a Euclidean profile $T_n^j\phi^j$ appearing in the linear profile decomposition (Lemma \ref{linear profile}) into the Black-Box of the large data scattering result for focusing cubic NLS on $\R^4$ given by Dodson \cite{Dodson4dfocusing}. To be more precise, we mainly need to solve the following two issues:
\begin{itemize}
\item[(i)] While the semivirial $\mK(T_n^j\phi^j)$  of a Euclidean profile $T_n^j\phi^j$ contains only the information of the partial kinetic energy $\|\nabla_{x}T_n^j\phi^j\|^2_{L^2(\R^3\times\T)}$, in order to apply the Black-Box-Theory on $\R^4$ we need to consider the full virial which contains the complete kinetic energy $\|\nabla_{x,y}\phi^j\|_{L^2(\R^4)}^2$. At the first glance, this seems to be impossible since we are attempting to upgrade some degenerate information into a complete one in the absence of further useful conditions. We shall however see that the missing information concerning the energy $\|\pt_y \phi^j\|_{L^2(\R^4)}^2$ is indeed deeply hidden in the seemingly unrelated condition $\mH(u)<m_{\mM(u)}$, where the proof relies on some highly non-trivial variational estimates.

\item[(ii)] Another issue here is that the Schr\"odinger flow $e^{it_n^j\Delta_{x,y}}$ does not necessarily preserve the Lebesgue norm in the case $t^j_n (N_n^j)^2\to\pm\infty$ (where $t_n^j$ and $N_n^j$ are the time translation and scaling parameters corresponding to the profile $T_n^j\phi^j$). In the defocusing case, this issue can be easily solved by invoking the Sobolev's inequality $\|T_n^j \phi^j\|_{L^4(\R^3\times\T)}\lesssim \|\phi^j\|_{\dot{H}^1(\R^4)}$, as we demand no restriction on the initial data in order to apply the corresponding Black-Box-Theory on $\R^4$. In the focusing case, however, any attempt involving a naive application of the Sobolev's inequality might immediately violate the underlying variational structure and lead to a failure of embedding the Euclidean profile into the Black-Box of the scattering result on $\R^4$. We shall prove that in this case the statement $\|T_n^j \phi^j\|_{L^4(\R^3\times\T)}=o_n(1)$ holds true and leads to the desired claim. Notice when replacing $\R^3\times\T$ to $\R^4$, the claim follows directly from the dispersive estimate on $\R^4$. In our case there will still be an $(N_n^j)^{-\frac14}$-decay missing after applying the dispersive estimate on $\R^3$. We shall prove that the missing decay can be compensated by the torus side by appealing to the Sobolev's inequality on $\T$.
\end{itemize}

At the end of the introductory section, we remark that in the previous works \cite{Luo_Waveguide_MassCritical,Luo_inter} and also in the present paper we have mainly dealt with the case $n=1$. We expect that the framework of semivirial-vanishing geometry should also work in the mass-critical case and improve the results given in \cite{Luo_Waveguide_MassCritical}. Consequently, we expect that the so far developed theory should work equally well in the much harder double critical case $n=2$. These open problems shall provide some interesting topics for future research.

\subsubsection*{Outline of the paper}
The rest of the paper is organized as follows: In Section \ref{sec notation} we list some notation and definitions that will be used throughout the paper. In Section \ref{sec: Dependence} we prove Theorem \ref{thm threshold mass}. Finally, Theorem \ref{main thm} and \ref{thm blow up} are shown in Section \ref{sec: Scattering}.

\subsection{Notation and definitions}\label{sec notation}
We use the notation $A\lesssim B$ whenever there exists some positive constant $C$ such that $A\leq CB$. Similarly we define $A\gtrsim B$ and we use $A\sim B$ when $A\lesssim B\lesssim A$.

For simplicity, we ignore in most cases the dependence of the function spaces on their spatial domains and hide this dependence in their indices. For example $L_x^2=L^2(\R^d)$, $H_{x,y}^1= H^1(\R^d\times \T)$
and so on. However, when the space is involved with time, we still display the underlying temporal interval such as $L_t^pL_x^q(I)$, $L_t^\infty L_{x,y}^2(\R)$ etc. The norm $\|\cdot\|_p$ is defined by $\|\cdot\|_p:=\|\cdot\|_{L_{x,y}^p}$. We shall also consider functions defined on $\R^{d+1}$. In this case, the $\R^{d+1}$-gradient is denoted by $\nabla_{\R^{d+1}}$.

The following quantities will be used throughout the paper: For $u\in H_{x,y}^1$, define
\begin{align}
\mM(u)&:=\|u\|^2_{2},\label{def of mass}\\
\mH(u)&:=\frac{1}{2}\|\nabla_{x,y} u\|^2_{2}-\frac{d-1}{2(d+1)}\|u\|^{2+4/(d-1)}_{2+4/(d-1)},\label{def of mhu}\\
\mK(u)&:=\|\nabla_{x} u\|^2_{2}-\frac{d}{d+1}\|u\|^{2+4/(d-1)}_{2+4/(d-1)},\\
\mI(u)&:=\frac{1}{2}\|\pt_y u\|_{2}^2+\frac{1}{2(d+1)}\|u\|^{2+4/(d-1)}_{2+4/(d-1)}=\mH(u)-\frac{1}{2}
\mK(u).\label{def of mI}
\end{align}
For $\ld\in(0,\infty)$, define
\begin{align}\label{def modified energy}
\mH_{\ld}(u)&:=\frac{\ld}{2}\|\nabla_{y} u\|^2_{2}+\frac{1}{2}\|\nabla_{x} u\|^2_{2}
-\frac{d-1}{2(d+1)}\|u\|^{2+4/(d-1)}_{2+4/(d-1)},\\
\mI_{\ld}(u)&:=\frac{\ld}{2}\|\pt_y u\|_{2}^2+\frac{1}{2(d+1)}\|u\|^{2+4/(d-1)}_{2+4/(d-1)}.\label{def of I ld}
\end{align}
For $u\in H_x^1$, define
\begin{align}
\wmM(u)&:=\|u\|^2_{L_x^2},\label{def of wmM}\\
\wmH(u)&:=\frac{1}{2}\|\nabla_{x} u\|^2_{L_x^2}-\frac{d-1}{2(d+1)}\|u\|^{2+4/(d-1)}_{L_x^{2+4/(d-1)}},\label{def of wmH}\\
\wmI(u)&:=\frac{1}{2(d+1)}\|u\|^{2+4/(d-1)}_{L_x^{2+4/(d-1)}},\label{def of wmI}\\
\wmK(u)&:=\|\nabla_{x} u\|^2_{L_x^2}-\frac{d}{d+1}\|u\|^{2+4/(d-1)}_{L_x^{2+4/(d-1)}}\label{def of wmK}.
\end{align}
For $u\in H^1(\R^{d+1})$, define
\begin{align}
\mH^*(u)&:=\frac{1}{2}\|\nabla_{\R^{d+1}}u\|^2_{L^2(\R^{d+1})}-\frac{d-1}{2(d+1)}\|u\|^{2+4/(d-1)}_{L^{2+4/(d-1)}(\R^{d+1})},\label{def of h star}\\
\mK^*(u)&:=\|\nabla_{\R^{d+1}}u\|^2_{L^2(\R^{d+1})}-\|u\|^{2+4/(d-1)}_{L^{2+4/(d-1)}(\R^{d+1})}.\label{def of k star}
\end{align}
We also define the sets
\begin{align}
S(c)&:=\{u\in H_{x,y}^1:\mM(u)=c\},\\
V(c)&:=\{u\in S(c):\mK(u)=0\},\\
\widehat{S}(c)&:=\{u\in H_x^1:\wmM(u)=c\},\\
\widehat{V}(c)&:=\{u\in \widehat{S}(c):\wmK(u)=0\}
\end{align}
and the variational problems
\begin{align}
m_c&:=\inf\{\mH(u):u\in V(c)\},\label{def of mc}\\
m_{1,\ld}&:=\inf\{\mH_\ld(u):u\in V(1)\},\label{def of auxiliary problem}\\
\wm_c&:=\inf\{\wmH(u):u\in \widehat{V}(c)\}\label{def of wmc}.
\end{align}
Finally, for a function $u\in H_{x,y}^1$, the scaling operator $u\mapsto u^t$ for $t\in(0,\infty)$ is defined by
\begin{align}\label{def of scaling op}
u^t(x,y):=t^{\frac d2}u(tx,y).
\end{align}
The following well-known results concerning the variational problem $\wm_c$ will also be frequently invoked. We refer for instance to \cite{Cazenave2003,Jeanjean1997,Bellazzini2013,BellazziniJeanjean2016} for details of the corresponding proofs.

\begin{lemma}\label{lem wmc property}
The following statements hold true:
\begin{itemize}
\item[(i)]For any $c>0$ the variational problem $\wm_c$ has an optimizer $P_c\in \widehat{S}(c)$. Moreover, $P_c$ satisfies the standing wave equation
\begin{align}\label{standing wave on rd}
-\Delta_x P_c+\omega_c P=|P_c|^{\frac{4}{d-1}} P_c
\end{align}
with some $\omega_c>0$.
\item[(ii)] Any solution $P_c\in H^1(\R^d)$ of \eqref{standing wave on rd} with $\omega_c>0$ is of class $W^{3,p}(\R^d)$ for all $p\in[2,\infty)$.
\item[(iii)] Any solution $P_c\in H^1(\R^d)$ of \eqref{standing wave on rd} satisfies $\wmK(P_c)=0$.
\item[(iv)] The mapping $c\mapsto \wm_c$ is strictly monotone decreasing and continuous on $(0,\infty)$. Moreover, we have $$\lim_{c\to0}\wm_c=\infty\quad\text{and}\quad\lim_{c\to\infty}\wm_c=0.$$
\end{itemize}
\end{lemma}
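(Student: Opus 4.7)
The proof proceeds by standard variational and elliptic-regularity arguments, drawing on the methods of \cite{Jeanjean1997,Cazenave2003,Bellazzini2013,BellazziniJeanjean2016}; I take the four parts in turn. For (i), the plan is to set up a constrained minimizing sequence and extract a non-vanishing limit. The Gagliardo-Nirenberg inequality on $\R^d$ combined with the constraint $\wmK(u)=0$ yields the coercive lower bound $\|\nabla_x u\|_{L_x^2}^2\gtrsim c^{-1}$ on $\widehat{V}(c)$, and the identity $\wmH(u)|_{\wmK=0}=\frac{1}{2d}\|\nabla_x u\|_{L_x^2}^2$ shows that $\wm_c>0$ and that any minimizing sequence $(u_n)$ is $H^1$-bounded. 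Replacing $u_n$ by its Schwarz symmetrization and rescaling via $u\mapsto u^t$ to restore $\wmK=0$ produces a radial minimizing sequence, and the compact embedding $H^1_{\mathrm{rad}}(\R^d)\hookrightarrow L_x^{2+4/(d-1)}$ valid for $d\geq 2$ supplies a non-trivial weak limit $P_c$. The main delicate point is ruling out mass loss in this weak convergence, for which I would invoke the strict subadditivity $\wm_c<\wm_{c_1}+\wm_{c-c_1}$ for $c_1\in(0,c)$, itself a consequence of the scaling relation proved in (iv).

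Once the optimizer $P_c\in\widehat{V}(c)$ is in hand, the Lagrange multiplier theorem produces $\lambda,\mu\in\R$ such that
\begin{equation*}
-\Delta_x P_c-|P_c|^{4/(d-1)}P_c=2\lambda P_c+\mu\bigl(-2\Delta_x P_c-\tfrac{2d}{d-1}|P_c|^{4/(d-1)}P_c\bigr).
\end{equation*}
Applying the Pohozaev identity to this equation, eliminating the mass term via the associated Nehari identity, and invoking $\wmK(P_c)=0$ forces $\mu=0$, so that $P_c$ satisfies \eqref{standing wave on rd} with $\omega_c:=-2\lambda$; testing \eqref{standing wave on rd} against $P_c$ and using $\wmK(P_c)=0$ then yields $\omega_c\|P_c\|_{L_x^2}^2=\frac{1}{d+1}\|P_c\|_{L_x^{2+4/(d-1)}}^{2+4/(d-1)}>0$. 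Part (iii) is exactly the same Pohozaev-Nehari calculation carried out for an arbitrary $H^1$-solution of \eqref{standing wave on rd}, and part (ii) is a routine elliptic bootstrap applied to $(-\Delta_x+\omega_c)P_c=|P_c|^{4/(d-1)}P_c$: Sobolev embedding, the $L^p$-theory for $-\Delta_x+\omega_c$, and iteration upgrade $P_c$ from $H^1$ to $W^{3,p}(\R^d)$ for every $p\in[2,\infty)$.

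The engine for (iv) is a one-parameter scaling computation: for any $\sigma>0$, the function $(\sigma P_c)^{\sigma^{-2}}$ (with $u^t(x):=t^{d/2}u(tx)$) lies in $\widehat{V}(\sigma^2 c)$, and using $\wmK(P_c)=0$ explicitly one checks $\wmH((\sigma P_c)^{\sigma^{-2}})=\sigma^{-2}\wm_c$. Hence
$$\wm_{\sigma^2 c}\leq \sigma^{-2}\wm_c\qquad\text{for every }\sigma>0,$$
which yields strict monotonicity and the limit $\lim_{c\to\infty}\wm_c=0$, while the coercive lower bound $\wm_c\gtrsim c^{-1}$ from (i) gives $\lim_{c\to 0}\wm_c=\infty$. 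Finally, running the scaling inequality in both directions (applied to optimizers at $c$ and at $\sigma^2 c$) forces $c\wm_c\equiv\mathrm{const}$, from which continuity is immediate.
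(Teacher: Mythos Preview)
The paper does not provide its own proof of this lemma; it simply cites \cite{Cazenave2003,Jeanjean1997,Bellazzini2013,BellazziniJeanjean2016} for the details. Your sketch follows the standard approach from those references and is essentially correct.

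One small point worth flagging: as written there is a mild circularity, since in part (i) you invoke strict subadditivity ``itself a consequence of the scaling relation proved in (iv)'', while your argument for (iv) uses the optimizer $P_c$ whose existence is established only in (i). This is easily repaired: the scaling inequality $\wm_{\sigma^2 c}\leq\sigma^{-2}\wm_c$ (and hence the identity $c\,\wm_c=\mathrm{const}$, by running it in both directions) follows directly from applying the map $u\mapsto(\sigma u)^{\sigma^{-2}}$ to an arbitrary element of $\widehat V(c)$, not just to an optimizer. Thus (iv) can be established first without any appeal to existence, and the strict monotonicity it yields then suffices to rule out mass loss in (i) (indeed strict subadditivity is not even needed: if the weak limit had mass $c_1<c$ one would obtain $\wm_{c_1}\leq\wm_c$, contradicting strict monotonicity).
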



\section{$y$-dependence of the ground states}\label{sec: Dependence}
In this section we give the proof of Theorem \ref{thm threshold mass}. Similarly as in \cite{TTVproduct2014,Luo_inter} we shall firstly consider the auxiliary problem $m_{1,\ld}$ defined by \eqref{def of auxiliary problem}. Following the same line as in \cite{TTVproduct2014,Luo_inter} we prove the following characterization of $m_{1,\ld}$ for varying $\ld$.

\begin{lemma}\label{lemma auxiliary}
Let $\wm_c$ be the quantity defined through \eqref{def of wmc}. Then there exists some $\ld_*\in(0,\infty)$ such that
\begin{itemize}
\item For all $\ld\in(0,\ld_*)$ we have $m_{1,\ld}<2\pi \wm_{(2\pi)^{-1}}$.
\item For all $\ld\in(\ld_*,\infty)$ we have $m_{1,\ld}=2\pi \wm_{(2\pi)^{-1}}$. Moreover, any minimizer $u_\ld$ of $m_{1,\ld}$ must satisfy $\pt_y u_\ld=0$.
\end{itemize}
\end{lemma}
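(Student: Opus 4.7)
For each fixed $u \in H^1(\R^d\times\T)$, the map $\ld \mapsto \mH_\ld(u)$ is affine non-decreasing, so $\ld \mapsto m_{1,\ld}$ is non-decreasing. The $\R^d$-ground state $P_{(2\pi)^{-1}}$ supplied by Lemma~\ref{lem wmc property}(i), viewed as a $y$-independent element of $H^1(\R^d\times\T)$, lies in $V(1)$ (because $\mM = 2\pi\wmM = 1$ and $\mK = 2\pi\wmK = 0$) and has energy $\mH_\ld(P_{(2\pi)^{-1}}) = 2\pi\wm_{(2\pi)^{-1}}$ independently of $\ld$ since $\pt_y P_{(2\pi)^{-1}} = 0$. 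Hence $m_{1,\ld} \leq 2\pi\wm_{(2\pi)^{-1}}$ for every $\ld>0$, and setting
\begin{align*}
\ld_* := \sup\{\ld > 0 : m_{1,\ld} < 2\pi\wm_{(2\pi)^{-1}}\}, \qquad \sup \emptyset := 0,
\end{align*}
we automatically get $m_{1,\ld} = 2\pi\wm_{(2\pi)^{-1}}$ for $\ld > \ld_*$ and $m_{1,\ld} < 2\pi\wm_{(2\pi)^{-1}}$ for $\ld < \ld_*$. The lemma thus reduces to proving $0<\ld_*<\infty$ and that minimizers for $\ld>\ld_*$ are $y$-independent.

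\textbf{Small $\ld$: $\ld_* > 0$ via a tensor test function.} The scaling $u\mapsto u^t$ from (\ref{def of scaling op}) preserves $\mM$ and $\|\pt_y u\|_2$, and for each $u$ with $\mM(u)=1$ there is a unique $t_u>0$ with $u^{t_u}\in V(1)$. Inserting the constraint $\mK(u^{t_u})=0$ into $\mH_0$ yields the scale-invariant identity
\begin{align*}
\mH_0(u^{t_u}) = \frac{1}{2(d+1)}\bg(\frac{d+1}{d}\bg)^{d}\frac{\|\nabla_x u\|_2^{2d}}{\|u\|_{\sigma}^{2d+2}}, \qquad \sigma := 2+\tfrac{4}{d-1}.
\end{align*}
Minimizing $\mH_0$ on $V(1)$ therefore amounts to maximizing the ratio $\|u\|_\sigma^{2d+2}/\|\nabla_x u\|_2^{2d}$ over mass-one functions, and restriction to $y$-independent competitors reproduces exactly $2\pi\wm_{(2\pi)^{-1}}$. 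For the tensor ansatz $u_0(x,y)=\phi(y)v(x)$ with $\|\phi\|_{L^2(\T)}=\|v\|_{L^2(\R^d)}=1$, this ratio factors as $\|\phi\|_{L_y^\sigma}^{2d+2}\|v\|_{L_x^\sigma}^{2d+2}/\|\nabla_x v\|_{L_x^2}^{2d}$, and H\"older on the torus gives $\|\phi\|_{L_y^\sigma}\geq (2\pi)^{1/\sigma-1/2}$ with \emph{strict} inequality whenever $\phi$ is non-constant (equivalently, $\|\phi\|_\sigma^{2d+2}>(2\pi)^{-2}$). Taking such a non-constant $\phi$ and $v$ maximizing the $\R^d$-ratio on $\{\wmM=1\}$ produces $u_0\in V(1)$ with $\mH_0(u_0^{t_{u_0}})<2\pi\wm_{(2\pi)^{-1}}$. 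Continuity of $\mH_\ld(u_0^{t_{u_0}})=\mH_0(u_0^{t_{u_0}})+\tfrac{\ld}{2}\|\pt_y u_0\|_2^2$ in $\ld$ preserves the strict inequality for all sufficiently small $\ld$, proving $\ld_*>0$.

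\textbf{Large $\ld$: $\ld_*<\infty$ and rigidity of minimizers.} For the coercivity direction I would decompose $u \in V(1)$ via the zero $y$-Fourier mode $m(u)(x) := (2\pi)^{-1}\int_\T u(x,y)\,dy$, writing $u = m(u) + \tilde u$ with $\int_\T\tilde u\,dy=0$. Plancherel in $y$ splits the $L^2$ and $\|\nabla_{x,y}\cdot\|_2^2$ quantities orthogonally, and the Poincar\'e inequality on $\T$ gives $\|\tilde u\|_2 \leq \|\pt_y u\|_2$. A minimizing sequence $(u_n)$ for $m_{1,\ld}$ enjoys a uniform $H^1$-bound (from $\mH_\ld(u_n) \leq 2\pi\wm_{(2\pi)^{-1}}$ combined with $\mK(u_n)=0$, which forces $\mH_\ld(u_n)=\mI_\ld(u_n)$), while the $\ld$-penalty yields $\|\pt_y u_n\|_2^2 = O(\ld^{-1})$. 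The main technical obstacle, signaled in the introduction and to be handled by a scale-invariant Gagliardo-Nirenberg estimate on $\R^d\times\T$ in the spirit of \cite{Luo_Waveguide_MassCritical,Luo_inter,TTVproduct2014}, is the control of the cross-terms in the nonlinearity:
\begin{align*}
\bg|\|u_n\|_\sigma^\sigma-2\pi\|m(u_n)\|_{L_x^\sigma}^\sigma\bg| = o_\ld(1)\quad\text{uniformly in }n.
\end{align*}
Together with the orthogonal splitting of mass and $\|\nabla_x\cdot\|_2^2$, this makes $m(u_n)$ an almost-admissible competitor for $\wm_{(2\pi)^{-1}}$; projecting onto $\widehat V((2\pi)^{-1})$ by a small rescaling we get $2\pi\wmH(m(u_n))\geq 2\pi\wm_{(2\pi)^{-1}}-o_\ld(1)$, hence $\mH_\ld(u_n)\geq 2\pi\wm_{(2\pi)^{-1}}-o_\ld(1)$. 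Combined with the upper bound, $m_{1,\ld}=2\pi\wm_{(2\pi)^{-1}}$ for $\ld$ large, so $\ld_*<\infty$. The minimizer rigidity is then immediate from strict monotonicity: if $u_\ld$ minimizes $m_{1,\ld}$ for some $\ld>\ld_*$ with $\pt_y u_\ld\not\equiv 0$, pick $\ld'\in(\ld_*,\ld)$ to obtain $m_{1,\ld'}\leq \mH_{\ld'}(u_\ld)=\mH_\ld(u_\ld)-\tfrac{\ld-\ld'}{2}\|\pt_y u_\ld\|_2^2<2\pi\wm_{(2\pi)^{-1}}$, contradicting $m_{1,\ld'}=2\pi\wm_{(2\pi)^{-1}}$.
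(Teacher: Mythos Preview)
Your setup (monotonicity of $\ld\mapsto m_{1,\ld}$, the upper bound from $y$-independent competitors, and the definition of $\ld_*$) and your rigidity argument for minimizers when $\ld>\ld_*$ are correct and match the paper. Your small-$\ld$ argument is also correct, and in fact a bit cleaner than the paper's: rather than building an explicit piecewise-linear profile $\rho$ with $\|\rho\|_2^2=\|\rho\|_\sigma^\sigma<2\pi$ and then invoking the strict monotonicity of $c\mapsto\wm_c$, you observe directly that the scale-invariant quotient $\|u\|_\sigma^{2d+2}/\|\nabla_x u\|_2^{2d}$ factorises on tensors and that the torus factor is strictly improved by any non-constant $\phi$ via strict H\"older. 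Both routes work.

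The large-$\ld$ direction, however, has a genuine gap beyond the deferred cross-term estimate. Even granting your claim
\[
\big|\|u_n\|_\sigma^\sigma-2\pi\|m(u_n)\|_{L_x^\sigma}^\sigma\big|=o_\ld(1),
\]
your argument only yields $m_{1,\ld}\geq 2\pi\wm_{(2\pi)^{-1}}-o_\ld(1)$, hence $\lim_{\ld\to\infty}m_{1,\ld}=2\pi\wm_{(2\pi)^{-1}}$. This does \emph{not} give $\ld_*<\infty$: a non-decreasing function can approach its upper bound without ever attaining it. You need $m_{1,\ld}=2\pi\wm_{(2\pi)^{-1}}$ for some finite $\ld$, and nothing in your sketch forces equality.

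The paper closes this gap by a completely different mechanism (Lemmas~\ref{additional exitence lemma}--\ref{lemma no dependence}). It first establishes \emph{existence of minimizers} $u_\ld$ for all large $\ld$ (this is where the scale-invariant Gagliardo--Nirenberg inequality actually enters, to produce a non-vanishing weak limit). Then, rather than estimating $m_{1,\ld}$ from below, it works at the level of the Euler--Lagrange equation: differentiating in $y$, testing with $w_\ld:=\pt_y u_\ld$, and using (a) Poincar\'e on $\T$ for the mean-zero function $w_\ld$, (b) the strong $H_{x,y}^1$-convergence $u_\ld\to u\in H_x^1$ with $u\in L^\infty$, and (c) $\beta_\ld\to\beta>0$, one obtains $\|w_\ld\|_{H_{x,y}^1}=0$ once $\ld$ exceeds a finite threshold. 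This gives $\pt_y u_\ld\equiv 0$ directly, from which $m_{1,\ld}=2\pi\wm_{(2\pi)^{-1}}$ for all large $\ld$ follows immediately. Your variational lower-bound approach would need to be upgraded to an \emph{exact} inequality (no $o_\ld(1)$ loss), which seems hard without passing through the minimizer and its equation.
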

The proof of Theorem \ref{thm threshold mass} follows then from Lemma \ref{lemma auxiliary} by simple rescaling arguments. We underline that in comparison to the models studied in \cite{TTVproduct2014,Luo_inter}, the new challenge here is to identify a non-vanishing weak limit of a minimizing sequence for the variational problem $m_c$ (respectively $m_{1,\ld}$), where the concentration compactness arguments in \cite{TTVproduct2014} fail in the energy-critical setting. The key observation is that for large mass $c$ (respectively large $\ld$) we are able to prove that the $L_{x,y}^{2+4/(d-1)}$-norm of a minimizing sequence $(u_n)_n$ will concentrate to the zero Fourier coefficient $m(u_n):=(2\pi)^{-1}\int_{\T}u_n(x,y)\,dy$ of $u_n$ w.r.t. the $y$-direction. In this case we may appeal to the classical concentration compactness arguments on $\R^d$ to identify a non-vanishing weak limit.

\subsection{Some auxiliary preliminaries}\label{sec: Existence}
As a starting point, we collect in this subsection some usefully auxiliary results from \cite{Luo_inter}. We shall simply omit the proofs and refer to \cite{Luo_inter} for further details.

\begin{lemma}[Scale-invariant Gagliardo-Nirenberg inequality on $\R^d\times\T$]\label{lemma gn additive}
There exists some $C>0$ such that for all $u\in H_{x,y}^1$ we have
\begin{align*}
\|u\|_{\frac{2(d+1)}{d-1}}^{\frac{2(d+1)}{d-1}}\leq C\|\nabla_x u\|_2^{\frac{2d}{d-1}}
(\| u\|_{2}^{\frac{2}{d-1}}+\|\pt_y u\|_{2}^{\frac{2}{d-1}}).
\end{align*}
\end{lemma}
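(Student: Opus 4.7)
The plan is to exploit the scale invariance of the inequality: under the $x$-only dilation $u^t(x,y):=t^{d/2}u(tx,y)$ the $L^2_{x,y}$- and $\pt_y$-norms are preserved, $\|\nabla_x u\|_2$ scales like $t$, and $\|u\|_p$ (with $p:=2(d+1)/(d-1)$) scales like $t^{d/(d+1)}$, so both sides scale identically like $t^{2d/(d-1)}$. The standard recipe for producing such scale-invariant inequalities is to start from a non-scale-invariant Sobolev embedding, apply it to $u^t$, and optimize over $t$.

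To implement this, I would first establish the critical Sobolev embedding on the $(d+1)$-dimensional product manifold,
\begin{align*}
\|u\|_p \leq C\bigl(\|\nabla_x u\|_2 + \|\pt_y u\|_2 + \|u\|_2\bigr),
\end{align*}
which is the natural analogue of $H^1(\R^{d+1})\hookrightarrow L^p(\R^{d+1})$ and can be reduced to the Euclidean embedding by multiplying the $y$-periodic extension of $u$ by a smooth compactly supported cutoff in $y$ and restricting back to one fundamental domain. Plugging $u^t$ into this estimate and using the scalings above yields
\begin{align*}
\|u\|_p \leq Ct^{-d/(d+1)}\bigl(t\|\nabla_x u\|_2 + \|\pt_y u\|_2 + \|u\|_2\bigr).
\end{align*}
Choosing $t:=(\|u\|_2+\|\pt_y u\|_2)/\|\nabla_x u\|_2$ to balance the two contributions in the parenthesis (the degenerate case $\|\nabla_x u\|_2=0$ forces $u\equiv 0$ and is trivial), and then raising the resulting bound to the $p$-th power, the exponent identities $pd/(d+1)=2d/(d-1)$ and $p/(d+1)=2/(d-1)$ convert it into
\begin{align*}
\|u\|_p^p \leq C'\|\nabla_x u\|_2^{2d/(d-1)}\bigl(\|u\|_2+\|\pt_y u\|_2\bigr)^{2/(d-1)}.
\end{align*}

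Finally, separating $\|u\|_2$ from $\|\pt_y u\|_2$ amounts to bounding $(a+b)^{2/(d-1)}$ by $C\bigl(a^{2/(d-1)}+b^{2/(d-1)}\bigr)$: for $d\geq 3$ the exponent $2/(d-1)\leq 1$ and this is the usual subadditivity of concave powers, while for $d=2$ it reads $(a+b)^2\leq 2(a^2+b^2)$. The principal obstacle is therefore Step~1, the critical Sobolev embedding on $\R^d\times\T$; once it is in hand, the rest is pure scaling combined with an elementary convexity estimate. An alternative route, closer in spirit to \cite{Luo_Waveguide_MassCritical,Luo_inter}, would be to decompose $u=m(u)+\bar u$ with $m(u)(x):=(2\pi)^{-1}\int_\T u(x,y)\,dy$ and treat the zero-mode $m(u)$ by the classical Gagliardo--Nirenberg inequality on $\R^d$ at exponent $p$ (which yields the $\|u\|_2^{2/(d-1)}$-branch) while exploiting the Poincar\'e inequality $\|\bar u\|_{L^2_{x,y}}\lesssim\|\pt_y u\|_{L^2_{x,y}}$ on the zero-mean remainder $\bar u$ and repeating the scaling argument (which yields the $\|\pt_y u\|_2^{2/(d-1)}$-branch), thereby bypassing the direct embedding on the product manifold.
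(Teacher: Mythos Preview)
Your proposal is correct. The paper itself does not prove this lemma; it merely states it in Section~2.1 and writes ``We shall simply omit the proofs and refer to \cite{Luo_inter} for further details.'' So there is no in-paper proof to compare against directly.

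That said, your two approaches are both sound, and they are genuinely different. Your primary route---critical Sobolev embedding $H^1(\R^d\times\T)\hookrightarrow L^{2(d+1)/(d-1)}$ via a $y$-cutoff and the Euclidean embedding, followed by the one-parameter scaling optimization in $t$---is clean and conceptually transparent: it makes the scale invariance do all the work and requires no Fourier analysis in $y$. The alternative you sketch at the end (split $u=m(u)+(u-m(u))$, treat the zero mode by Gagliardo--Nirenberg on $\R^d$ and the mean-zero part via a Sobolev inequality on $\T$ plus Poincar\'e) is precisely the style of argument the paper deploys elsewhere, e.g.\ in Step~1 of the proof of Proposition~\ref{thm existence of ground state 1}, where the estimate $\|u-m(u)\|_{2(d+1)/(d-1)}\lesssim \|\pt_y u\|_2^{1/(d+1)}\|\nabla_x u\|_2^{d/(d+1)}$ is derived by writing $u$ as a Fourier series in $y$, applying Minkowski, and using Gagliardo--Nirenberg on $\R^d$ modewise. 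That computation, together with the standard $\R^d$-GN bound for $m(u)$, assembles directly into the lemma, and this is almost certainly the proof in \cite{Luo_inter}. The Fourier/mode approach has the advantage of yielding slightly sharper intermediate estimates (it separates the two branches $\|u\|_2^{2/(d-1)}$ and $\|\pt_y u\|_2^{2/(d-1)}$ from the outset rather than via a final convexity step), while your scaling approach is shorter and avoids any fiddling with Fourier coefficients.
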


\begin{lemma}[Lower and upper bound of $m_c$]\label{cor lower bound 1}
For any $c\in(0,\infty)$ we have $m_c\in(0,\infty)$, where $m_c$ is defined by \eqref{def of mc}.
\end{lemma}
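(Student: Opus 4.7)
The plan naturally splits into showing $m_c < \infty$ and $m_c > 0$ separately, and the main input for both is the $x$-scaling $u \mapsto u^t$ from \eqref{def of scaling op} together with the scale-invariant Gagliardo-Nirenberg inequality from the preceding lemma.

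\textbf{Upper bound.} I would first exhibit an element of $V(c)$. Fix any nonzero $u \in S(c)$ (for instance a rescaled Gaussian) and apply $u^t(x,y) = t^{d/2} u(tx,y)$. Elementary changes of variables give $\mM(u^t) = \mM(u) = c$, $\|\nabla_x u^t\|_2^2 = t^2\|\nabla_x u\|_2^2$, and $\|u^t\|_p^p = t^{2d/(d-1)}\|u\|_p^p$ for $p = 2(d+1)/(d-1)$. Consequently
$$\mK(u^t) = t^2\|\nabla_x u\|_2^2 - \tfrac{d}{d+1}\,t^{2d/(d-1)}\,\|u\|_p^p.$$
Since $2d/(d-1) > 2$ for all $d \geq 2$, the map $t \mapsto \mK(u^t)$ is continuous, strictly positive near $t = 0^+$, and tends to $-\infty$ as $t \to \infty$, so the intermediate value theorem produces $t_0 > 0$ with $u^{t_0} \in V(c)$. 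Then $m_c \leq \mH(u^{t_0}) < \infty$.

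\textbf{Lower bound.} Take any $u \in V(c)$. Substituting the constraint $\mK(u) = 0$, equivalently $\|u\|_p^p = \tfrac{d+1}{d}\|\nabla_x u\|_2^2$, into the definition of $\mH(u)$ gives the clean representation
$$\mH(u) = \frac{1}{2d}\|\nabla_x u\|_2^2 + \frac{1}{2}\|\pt_y u\|_2^2,$$
which is already non-negative. Note that $\mM(u) = c > 0$ forces $\|\nabla_x u\|_2 > 0$: otherwise $u$ would be $x$-independent and, as an $L^2(\R^d \times \T)$-function, identically zero. I would now insert the constraint $\mK(u) = 0$ into Lemma \ref{lemma gn additive} to deduce
$$\frac{d+1}{d}\,\|\nabla_x u\|_2^2 \leq C\,\|\nabla_x u\|_2^{2d/(d-1)}\bigl(c^{1/(d-1)} + \|\pt_y u\|_2^{2/(d-1)}\bigr).$$
Dividing by the strictly positive $\|\nabla_x u\|_2^2$ and applying $a+b \leq 2\max(a,b)$ on the right, a two-case analysis follows: in one case $\|\nabla_x u\|_2^{2}$ is bounded below by a constant times $c^{-1}$, in the other $\|\pt_y u\|_2^2 \geq c$. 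Either alternative feeds into the representation above to produce a strictly positive lower bound on $\mH(u)$ that depends only on $c$. Taking the infimum over $u \in V(c)$ gives $m_c > 0$.

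\textbf{Main obstacle.} The upper bound is essentially a one-line scaling argument once the exponent inequality $2d/(d-1) > 2$ is observed. The only nontrivial input is the lower bound, where the scale-invariant character of Lemma \ref{lemma gn additive} is crucial: its inhomogeneous counterpart on $\R^d \times \T$ would not reproduce the $x$-anisotropic pairing that matches the dichotomy built into $\mH(u)$, and one would not obtain a quantitative positivity statement uniform over $V(c)$. Beyond invoking this inequality, the remaining work is routine bookkeeping.
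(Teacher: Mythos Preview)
Your proof is correct and follows precisely the approach one expects here: the paper omits the argument and refers to \cite{Luo_inter}, but the tools you invoke---the $x$-scaling $u\mapsto u^t$ (equivalently Lemma \ref{monotoneproperty} (ii)) for the upper bound, and Lemma \ref{lemma gn additive} combined with the representation $\mH(u)=\tfrac{1}{2d}\|\nabla_x u\|_2^2+\tfrac12\|\pt_y u\|_2^2$ on $V(c)$ for the lower bound---are exactly the intended ones. The two-case dichotomy you describe is the right way to extract a uniform positive lower bound from the scale-invariant Gagliardo--Nirenberg inequality.
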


\begin{lemma}[Property of the mapping $t\mapsto \mK(u^t)$]\label{monotoneproperty}
Let $c>0$ and $u\in S(c)$. Then the following statements hold true:
\begin{enumerate}
\item[(i)] $\frac{\partial}{\partial t}\mH(u^t)=t^{-1} Q(u^t)$ for all $t>0$.
\item[(ii)] There exists some $t^*=t^*(u)>0$ such that $u^{t^*}\in V(c)$.
\item[(iii)] We have $t^*<1$ if and only if $\mK(u)<0$. Moreover, $t^*=1$ if and only if $\mK(u)=0$.
\item[(iv)] Following inequalities hold:
\begin{equation*}
Q(u^t) \left\{
\begin{array}{lr}
             >0, &t\in(0,t^*) ,\\
             <0, &t\in(t^*,\infty).
             \end{array}
\right.
\end{equation*}
\item[(v)] $\mH(u^t)<\mH(u^{t^*})$ for all $t>0$ with $t\neq t^*$.
\end{enumerate}
\end{lemma}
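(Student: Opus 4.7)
The plan is to reduce everything to explicit one-variable calculus in $t$. Using the change of variable $z = tx$, the definition $u^t(x,y) = t^{d/2}u(tx,y)$ yields the scaling identities
\[ \mM(u^t) = \mM(u), \quad \|\pt_y u^t\|_2^2 = \|\pt_y u\|_2^2, \quad \|\nabla_x u^t\|_2^2 = t^2 \|\nabla_x u\|_2^2, \quad \|u^t\|_p^p = t^{2d/(d-1)}\|u\|_p^p, \]
where $p := 2+4/(d-1)$; in particular $u^t \in S(c)$ for every $t>0$. Writing $A := \|\nabla_x u\|_2^2$ and $B := \tfrac{d}{d+1}\|u\|_p^p$, this gives the explicit expression
\[ \mK(u^t) = A t^2 - B t^{2d/(d-1)}, \]
from which, together with the analogous formula for $\mH(u^t)$, the whole lemma is to be read off.

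Claim (i) will be a direct differentiation: the nonlinear exponent contributes the factor $\tfrac{d-1}{2(d+1)}\cdot\tfrac{2d}{d-1} = \tfrac{d}{d+1}$, which matches exactly the coefficient in $t^{-1}\mK(u^t)$. For (ii) and (iii), note that $c > 0$ forces $u \not\equiv 0$ and hence $A, B > 0$, while $\tfrac{2d}{d-1} > 2$ for $d \geq 2$; thus the factorization $\mK(u^t) = t^2(A - B t^{2/(d-1)})$ admits the unique positive root $t^* = (A/B)^{(d-1)/2}$, placing $u^{t^*}$ in $V(c)$. Evaluating at $t=1$ gives $\mK(u) = A - B$, so the sign conditions on $\mK(u)$ correspond directly to comparing $A$ with $B$, hence to comparing $t^*$ with $1$, which is (iii).

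For (iv), the same factorization shows $\mK(u^t)$ is strictly positive on $(0,t^*)$ and strictly negative on $(t^*,\infty)$. Finally (v) will follow by combining (i) and (iv): $\tfrac{d}{dt}\mH(u^t) = t^{-1}\mK(u^t)$ is strictly positive on $(0,t^*)$ and strictly negative on $(t^*,\infty)$, so the scalar function $t \mapsto \mH(u^t)$ attains its unique global maximum at $t = t^*$. The proof will be elementary explicit calculus and I anticipate no serious obstacle; the only point of care is verifying $A, B > 0$ so that $t^*$ is well-defined and unique, which is automatic from $u \in S(c)\setminus\{0\}$ (the $L^2$-integrability on $\R^d\times\T$ excludes functions independent of $x$).
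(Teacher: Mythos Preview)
Your argument is correct. The scaling identities, the explicit formula $\mK(u^t)=t^2\bigl(A-Bt^{2/(d-1)}\bigr)$, the differentiation in (i), and the deduction of (ii)--(v) from the sign analysis of this one-variable expression are all accurate. Your remark that $A=\|\nabla_x u\|_2^2>0$ follows because a nonzero $L^2(\R^d\times\T)$ function cannot be constant in $x$ is the right justification, and it is good that you flagged it.

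As for comparison with the paper: the paper does not actually prove this lemma. It is stated in Section~2.1 among ``auxiliary preliminaries'' imported verbatim from the author's earlier work \cite{Luo_inter}, with the proofs explicitly omitted. Your direct computation is the standard route and is almost certainly what appears in that reference; in any case it is complete and self-contained. One cosmetic point: the symbol $Q$ in the lemma statement is an artifact (it is nowhere defined in the paper) and should be read as $\mK$, as you correctly inferred; this is confirmed by how the lemma is invoked later, e.g.\ in the proof of Theorem~\ref{thm blow up} where $\frac{d}{ds}\mH(\phi^s)\big|_{s=1}=\mK(\phi)$ is used.
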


\begin{lemma}[Property of the mapping $c\mapsto m_c$]\label{monotone lemma}
The mapping $c\mapsto m_c$ is continuous and monotone decreasing on $(0,\infty)$.
\end{lemma}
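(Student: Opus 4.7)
The plan is to prove continuity and strict monotonicity separately, both resting on the $L^2$-invariant $x$-dilation $u\mapsto u^t$ provided by Lemma \ref{monotoneproperty}.

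For continuity, I fix $c\in(0,\infty)$ and a sequence $c_n\to c$. To get $\limsup_n m_{c_n}\leq m_c$, I take a near-minimizer $u\in V(c)$ with $\mH(u)\leq m_c+\epsilon$ and rescale the amplitude by $\tilde u_n:=\alpha_n u$ with $\alpha_n:=\sqrt{c_n/c}\to 1$, so that $\tilde u_n\in S(c_n)$. Since $\mK(u)=0$, a direct computation yields $\mK(\tilde u_n)=(\alpha_n^2-\alpha_n^{p})\|\nabla_x u\|_2^2\to 0$ with $p:=2+4/(d-1)$. By Lemma \ref{monotoneproperty} there then exist projection parameters $t_n^\ast=\alpha_n^{-2}\to 1$ such that $\tilde u_n^{t_n^\ast}\in V(c_n)$, and an explicit calculation gives $\mH(\tilde u_n^{t_n^\ast})\to\mH(u)$. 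Hence $\limsup_n m_{c_n}\leq m_c+\epsilon$. The reverse inequality $\liminf_n m_{c_n}\geq m_c$ is obtained symmetrically, starting from near-minimizers of $m_{c_n}$ and rescaling into $V(c)$.

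For strict monotonicity, I fix $0<c_1<c_2$. The key tool is the mountain-pass formulation
\begin{equation*}
m_c=\inf_{v\in S(c)}\max_{t>0}\mH(v^t),
\end{equation*}
which coincides with \eqref{def of mc} by Lemma \ref{monotoneproperty}(ii)--(v). Given a near-minimizer $u\in V(c_1)$, the first attempt is the amplitude scaling $u\mapsto\theta u$ with $\theta:=\sqrt{c_2/c_1}>1$; the corresponding projection parameter from Lemma \ref{monotoneproperty} is $t^\ast=\theta^{-2}$, and a direct calculation using $\mK(u)=0$ gives
\begin{equation*}
\mH((\theta u)^{t^\ast})=\frac{c_1}{c_2}\cdot\frac{1}{2d}\|\nabla_x u\|_2^2+\frac{c_2}{c_1}\cdot\frac{1}{2}\|\pt_y u\|_2^2,
\end{equation*}
which is strictly below $\mH(u)=\frac{1}{2d}\|\nabla_x u\|_2^2+\frac{1}{2}\|\pt_y u\|_2^2$ exactly when $\|\pt_y u\|_2^2<\frac{c_1}{d c_2}\|\nabla_x u\|_2^2$, i.e.\ when $u$ is sufficiently close to being $y$-independent. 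To cover the general case I will supplement the scaling with a translated $y$-independent bump: pick $\phi\in C_c^\infty(\R^d)$, set $\phi_n(x,y):=\phi(x-ne_1)$, and define $v_n:=u+\phi_n$. Brezis--Lieb and Riemann--Lebesgue then yield $\mM(v_n)\to c_1+2\pi\|\phi\|_{L^2(\R^d)}^2$, $\mH(v_n)\to\mH(u)+2\pi\wmH(\phi)$, and $\mK(v_n)\to 2\pi\wmK(\phi)$. A careful choice of $\phi$ via the $L^2$-invariant $\R^d$-dilation $\phi=\psi^s$, a small amplitude correction to fix the mass at $c_2$, and a final projection via Lemma \ref{monotoneproperty} together produce $w\in V(c_2)$ with $\mH(w)<m_{c_1}$.

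The main obstacle will be the quantitative balancing in this last step: by Lemma \ref{monotoneproperty}(v), projecting onto $V$ strictly \emph{increases} the energy, so one must ensure that the bump's energy gain strictly dominates the projection cost. Since $\wmH(\phi)=\wmI(\phi)+\frac{1}{2}\wmK(\phi)$ with $\wmI(\phi)>0$, a negative $\wmH(\phi)$ forces $\wmK(\phi)<-2\wmI(\phi)<0$, so the two quantities are tightly linked and cannot be shrunk independently; the Gagliardo--Nirenberg inequality of Lemma \ref{lemma gn additive} together with the strict positivity and monotonicity of $\wm_c$ from Lemma \ref{lem wmc property} is what makes the balance quantitative. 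Complete calculations, in the closely related intercritical setting, can be found in \cite{Luo_inter}.
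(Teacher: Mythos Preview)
The paper itself does not prove this lemma: it is listed among the auxiliary results imported from \cite{Luo_inter}, with the proofs explicitly omitted. So there is no in-paper proof to compare against; the relevant question is whether your argument stands on its own.

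Your continuity argument is correct and essentially complete. The amplitude rescaling $\tilde u_n=\alpha_n u$ followed by the explicit projection $t_n^*=\alpha_n^{-2}$ from Lemma~\ref{monotoneproperty} does exactly what you claim, and the symmetric direction goes through once you note (as you implicitly use) that the already-established bound $\limsup_n m_{c_n}\le m_c$ keeps the near-minimizers of $m_{c_n}$ uniformly bounded in $H_{x,y}^1$, so the error terms in $\mH(v_n^{s_n})-\mH(u_n)$ are genuinely $o_n(1)$.

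Your monotonicity argument, however, is not a proof but a sketch with an acknowledged gap. The amplitude scaling $u\mapsto\theta u$ followed by projection only yields $m_{c_2}<m_{c_1}$ under the extra hypothesis $\|\pt_y u\|_2^2<\tfrac{c_1}{dc_2}\|\nabla_x u\|_2^2$, which you correctly identify as not always available. The bump construction you propose to cover the remaining case is only outlined: you state what the limits of $\mM(v_n)$, $\mH(v_n)$, $\mK(v_n)$ are, but you do not exhibit a choice of $\phi$ (or of the dilation parameter $s$) that makes the final energy after projection strictly below $\mH(u)$. You yourself flag the obstacle --- projecting onto $V$ via Lemma~\ref{monotoneproperty}(v) strictly raises the energy, so the gain from the bump must be shown to dominate --- and then defer to \cite{Luo_inter} for the ``complete calculations''. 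That is a genuine gap: as written, the monotonicity half of the lemma is not established.

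One minor point: the lemma asserts only ``monotone decreasing'', and the paper distinguishes this from ``strictly monotone decreasing'' (cf.\ Lemma~\ref{lem wmc property}(iv)). Every application in the paper (Steps~3 and~5 of Proposition~\ref{thm existence of ground state 1}, Lemma~\ref{lemma refined uniform bound}, \eqref{contradiction2}) uses only the non-strict version. So you are attempting more than is required --- though of course a proof of strict monotonicity would be perfectly acceptable if completed.
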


\begin{lemma}[Characterization of a minimizer as a standing wave solution]\label{minimizer is solution}
For any $c\in(0,\infty)$ an optimizer $u$ of $m_c$ is a solution of
\begin{align}\label{standing wave}
-\Delta_{x,y}u+\beta u=|u|^{\frac{4}{d-1}}u
\end{align}
with some $\beta\in\R$.
\end{lemma}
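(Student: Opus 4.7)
The plan is to exploit the scaling $u\mapsto u^t$ of Lemma~\ref{monotoneproperty} to turn every $L^2$-orthogonal perturbation of $u$ into an admissible perturbation inside $V(c)$. Minimality then directly yields $\mH'(u) = 2\beta u$ in $H^{-1}$ for a single real Lagrange multiplier $\beta$, which rearranges into the desired standing-wave equation. The elegant feature of this approach is that the ``second'' Lagrange multiplier one naively expects to accompany the constraint $\mK(u)=0$ is absorbed automatically by the scaling and never has to be shown separately to vanish.

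Concretely, I would fix an arbitrary $v \in H^1_{x,y}$ with $\mathrm{Re}\int u\bar v\,dxdy = 0$ and, for small $\epsilon$, set $\tilde u_\epsilon := s_\epsilon(u+\epsilon v)$ with $s_\epsilon := (c/\mM(u+\epsilon v))^{1/2}$, so that $\mM(\tilde u_\epsilon) = c$ and $\frac{d}{d\epsilon}\tilde u_\epsilon|_{\epsilon=0} = v$ (the mass-orthogonality of $v$ forces $\dot s_\epsilon|_{\epsilon=0}=0$). By Lemma~\ref{monotoneproperty}(ii), each $\tilde u_\epsilon$ admits a unique $t_\epsilon>0$ with $\tilde u_\epsilon^{t_\epsilon}\in V(c)$, and the smooth dependence $\epsilon\mapsto t_\epsilon$ near zero follows from the implicit function theorem applied to $\mK(\tilde u_\epsilon^t)=0$, whose hypothesis is the non-degeneracy
\begin{align*}
\frac{d}{dt}\Big|_{t=1}\mK(u^t)\;=\;-\,\frac{2d}{(d+1)(d-1)}\,\|u\|_{p}^{p}\;\neq\;0,\qquad p := 2+\tfrac{4}{d-1}.
\end{align*}
This identity follows from the explicit scaling $\|\nabla_x u^t\|_2^2 = t^2\|\nabla_x u\|_2^2$ and $\|u^t\|_p^p = t^{2d/(d-1)}\|u\|_p^p$ combined with the relation $\|\nabla_x u\|_2^2 = \tfrac{d}{d+1}\|u\|_p^p$ extracted from $\mK(u)=0$, while $\|u\|_p\neq 0$ is guaranteed by $u\not\equiv 0$ (since $\mM(u)=c>0$).

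The curve $u_\epsilon := \tilde u_\epsilon^{t_\epsilon}$ then lies in $V(c)$, so $\mH(u_\epsilon)\ge m_c = \mH(u)$ with equality at $\epsilon=0$. Differentiating at $\epsilon=0$ and applying Lemma~\ref{monotoneproperty}(i) together with $\mK(u)=0$ gives
\begin{align*}
0\;=\;\frac{d}{d\epsilon}\Big|_{\epsilon=0}\mH(u_\epsilon)\;=\;\mH'(u)[v]\;+\;\dot t_0\cdot\frac{d}{dt}\Big|_{t=1}\mH(u^t)\;=\;\mH'(u)[v]\;+\;\dot t_0\,\mK(u)\;=\;\mH'(u)[v].
\end{align*}
Since this holds for every $v$ perpendicular to $u$ in $L^2$, I conclude that $\mH'(u) = 2\beta u$ in $H^{-1}$ for some $\beta\in\R$. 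Expanding $\mH'(u) = -\Delta_{x,y}u - |u|^{p-2}u$ and renaming the multiplier gives exactly the standing-wave equation \eqref{standing wave}.

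The main hurdle is ensuring the implicit function theorem step genuinely produces $t_\epsilon$ as a $C^1$ function of $\epsilon$, which reduces to the non-degeneracy $\frac{d}{dt}|_{t=1}\mK(u^t)\neq 0$; as shown above, this is automatic from the explicit scaling identities and the fact that $u\not\equiv 0$. Everything else is elementary chain-rule bookkeeping in the duality $H^1\times H^{-1}$, and no further analysis is needed to arrive at a standing-wave equation with only a single real multiplier.
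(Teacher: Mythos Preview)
Your argument is correct. The paper itself omits the proof of this lemma, referring instead to the companion work~\cite{Luo_inter}, so there is no in-paper proof to compare against line by line. That said, your route---constructing a $C^1$ curve in $V(c)$ through $u$ by composing an $L^2$-orthogonal perturbation with the mass-renormalization $s_\epsilon$ and the fibering map $w\mapsto w^{t_\epsilon}$---is a clean and standard way to handle constrained minimization over a Pohozaev-type manifold. The key structural fact you exploit, namely that the $t$-derivative of $\mH$ along the scaling equals the constraint functional $\mK$ itself (Lemma~\ref{monotoneproperty}(i) with $Q=\mK$), is exactly what makes the second Lagrange multiplier disappear for free. The non-degeneracy computation $\tfrac{d}{dt}\big|_{t=1}\mK(u^t)=-\tfrac{2d}{(d+1)(d-1)}\|u\|_p^p\neq 0$ is correct and suffices for the implicit function theorem, since $(t,\epsilon)\mapsto\mK(\tilde u_\epsilon^t)$ is a polynomial in $t$ with coefficients that are $C^1$ in $\epsilon$.

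The alternative approach one often sees (and which is likely the one in~\cite{Luo_inter}, following Jeanjean and Bellazzini--Jeanjean) is to apply the Lagrange multiplier rule directly with the two constraints $\mM(u)=c$ and $\mK(u)=0$, obtaining $\mH'(u)=\beta\,\mM'(u)+\mu\,\mK'(u)$, and then argue separately that $\mu=0$ by testing against the scaling direction. Your method absorbs that second step into the construction of the test curve and is arguably more transparent; both arrive at the same conclusion with comparable effort.
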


\subsection{Existence of ground states with large mass}
In order to initiate the proof of Lemma \ref{lemma auxiliary} we will need to prove that the variational problem $m_{1,\ld}$ has an optimizer $u_{\ld}$ for all sufficiently large $\ld$. By rescaling, this is equivalent to show that the variational problem $m_c$ has an optimizer $u_c$ for all sufficiently large $c$. This statement will be given as Proposition \ref{thm existence of ground state 1} below. We also point out that Proposition \ref{thm existence of ground state 1} is of independent interest in the sense that its proof is indeed available for any mass as long as the underlying minimizing sequence has a non-vanishing weak limit (which at the moment is only known to be true for large mass).

\begin{proposition}[Existence of ground states with large mass]\label{thm existence of ground state 1}
There exist $c^*\in[0,\infty)$ such that for any $c\in(c^*,\infty)$ the minimization problem $m_c$ has a positive optimizer $u_c$. Moreover, $u_c$ solves the standing wave equation \eqref{standing wave} with some $\beta=\beta_c>0$.
\end{proposition}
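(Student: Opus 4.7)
The overall strategy is to show that a minimizing sequence $(u_n)_n\subset V(c)$ admits, after translation in $x$, a nonzero weak limit $u_\infty$ in $H^1_{x,y}$, and then to upgrade weak convergence to strong via the Brezis-Lieb lemma. The main obstacle is the energy-critical scaling of the nonlinearity at the exponent $p:=2(d+1)/(d-1)$: the embedding $H^1(\R^d\times\T)\hookrightarrow L^p(\R^d\times\T)$ fails to be compact even modulo translations, so a direct concentration-compactness argument on $\R^d\times\T$ cannot exclude vanishing. The key idea, outlined in the introduction, is that for sufficiently large $c$ the $L^p_{x,y}$-mass of $u_n$ asymptotically concentrates onto the zero $y$-Fourier coefficient $m(u_n)(x):=(2\pi)^{-1}\int_\T u_n(x,y)\,dy$; the compactness analysis then reduces to the pure $\R^d$-setting, where $p$ is Sobolev-subcritical and vanishing is harmless.

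After replacing $u_n$ by $|u_n|$ we may assume $u_n\geq 0$. Exploiting $\mK(u_n)=0$ we rewrite $\mH(u_n)=\tfrac{1}{2d}\|\nabla_x u_n\|_2^2+\tfrac12\|\pt_y u_n\|_2^2$, which combined with $\mH(u_n)\to m_c$ yields
\begin{align*}
\|\nabla_x u_n\|_2^2+\|\pt_y u_n\|_2^2\lesssim m_c+o_n(1).
\end{align*}
Together with $\|u_n\|_2^2=c$ this makes $(u_n)_n$ bounded in $H^1_{x,y}$. A standard scaling analysis of $\R^d$-ground states yields $\wm_c\lesssim c^{-1}$ for large $c$, whence $m_c\leq 2\pi\wm_{c/(2\pi)}\lesssim c^{-1}\to 0$ as $c\to\infty$.

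The heart of the argument is a two-sided estimate on the zero $y$-mode. Setting $v_n:=u_n-m(u_n)$, Poincaré's inequality on $\T$ gives $\|v_n\|_2\leq\|\pt_y u_n\|_2$; applying Lemma \ref{lemma gn additive} to $v_n$ (and using $\|\nabla_x v_n\|_2\leq\|\nabla_x u_n\|_2$) produces the upper bound
\begin{align*}
\|v_n\|_p^p\lesssim\|\nabla_x u_n\|_2^{2d/(d-1)}\|\pt_y u_n\|_2^{2/(d-1)}\lesssim m_c^{(d+1)/(d-1)}\lesssim c^{-(d+1)/(d-1)}.
\end{align*}
On the other hand, combining $\mK(u_n)=0$ with Lemma \ref{lemma gn additive} applied to $u_n$ gives the lower bound $\|u_n\|_p^p\gtrsim c^{-1}$ uniformly in $n$, so the ratio $\|v_n\|_p^p/\|u_n\|_p^p\lesssim c^{-2/(d-1)}$ vanishes as $c\to\infty$. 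Choosing $c$ large enough that $\|v_n\|_p\leq\tfrac12\|u_n\|_p$ uniformly in $n$ yields $\|m(u_n)\|_{L^p(\R^d)}\gtrsim_c 1$. Since $m(u_n)$ is bounded in $H^1(\R^d)$ and $p$ is Sobolev-subcritical on $\R^d$ for all $d\geq 2$, the classical Lions concentration-compactness principle on $\R^d$ furnishes translations $\tau_n\in\R^d$ and a nonzero $w\in H^1(\R^d)$ with $m(u_n)(\cdot-\tau_n)\rightharpoonup w$. Translating $u_n$ by $\tau_n$ in $x$, the resulting weak limit $u_\infty\in H^1_{x,y}$ satisfies $m(u_\infty)=w\neq 0$, hence $u_\infty\not\equiv 0$.

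To finish, set $w_n:=u_n-u_\infty\rightharpoonup 0$. The Brezis-Lieb lemma supplies asymptotic decompositions $\mM(u_n)=\mM(u_\infty)+\mM(w_n)+o_n(1)$, together with analogous identities for $\mH$ and $\mK$. A standard case analysis based on rescaling into the semivirial-vanishing manifold via Lemma \ref{monotoneproperty}, combined with the monotonicity of $c\mapsto m_c$ (Lemma \ref{monotone lemma}), excludes $\mM(u_\infty)\in(0,c)$, forcing $\mM(u_\infty)=c$ and strong $L^2_{x,y}$-convergence of $u_n$ to $u_\infty$. Passing to the limit in $\mH$ and $\mK$ shows $u_\infty\in V(c)$ with $\mH(u_\infty)=m_c$; positivity of $u_c:=u_\infty$ is preserved, Lemma \ref{minimizer is solution} supplies the Lagrange multiplier $\beta_c\in\R$, and $\beta_c>0$ follows from the standard Lagrange-multiplier identification of $-\beta_c/2$ with the (one-sided) derivative at $c$ of the decreasing map $c\mapsto m_c$. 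The most delicate step of the proof is the concentration estimate for $v_n$ in the third paragraph: it is precisely there that the energy-critical scaling is in principle dangerous, and only the combination of Poincaré's inequality on $\T$ (exploiting that $v_n$ has zero $y$-mean) with the scale-invariant Gagliardo-Nirenberg inequality of Lemma \ref{lemma gn additive} circumvents the obstacle.
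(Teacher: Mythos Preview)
Your first half---showing that the zero $y$-mode $m(u_n)$ carries a definite amount of $L^p$-mass for large $c$, and then invoking Lions on $\R^d$ to extract a nonzero weak limit---captures the paper's key new idea and is essentially correct. The paper derives the same bound $\|u_n-m(u_n)\|_p^p\lesssim\|\pt_y u_n\|_2^{2/(d-1)}\|\nabla_x u_n\|_2^{2d/(d-1)}$ via a slightly different route (Sobolev on $\T$ for mean-zero functions, Minkowski, then Gagliardo--Nirenberg on $\R^d$), but your use of Lemma~\ref{lemma gn additive} together with Poincar\'e on $\T$ yields the same conclusion. One small slip: after replacing $u_n$ by $|u_n|$ you no longer have $\mK(u_n)=0$ exactly, only $\mK(|u_n|)\leq 0$ by the diamagnetic inequality; the paper postpones taking absolute values until after the weak limit is identified, and then works with the Le~Coz formulation $\tilde m_c=\inf\{\mI(u):\mK(u)\leq 0,\,\mM(u)=c\}$ rather than $m_c$ itself.

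The serious gaps are in the second half. First, your ``standard case analysis'' to force $\mM(u_\infty)=c$ and then ``passing to the limit in $\mH$ and $\mK$'' does not work as stated in the energy-critical regime: strong $L^2$-convergence plus $H^1$-boundedness does not give strong convergence in $L^p$ when $p=2(d+1)/(d-1)$ is exactly the Sobolev-critical exponent for $\R^d\times\T$, so you cannot simply pass to the limit in $\mK$. The paper sidesteps this by never asserting strong convergence of $(u_n)_n$; instead it shows via Brezis--Lieb in the Le~Coz formulation that $\mK(u_\infty)\leq 0$, then $\mK(u_\infty)=0$, so $u_\infty$ is a minimizer for $m_{c_1}$ with $c_1:=\mM(u_\infty)\leq c$. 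Only \emph{after} proving $\beta>0$ does the paper conclude $c_1=c$ (Step~5).

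Second---and this is the more substantial omission---your argument for $\beta_c>0$ is incomplete. Identifying $-\beta_c/2$ with the (one-sided) derivative of the decreasing map $c\mapsto m_c$ yields at best $\beta_c\geq 0$; Lemma~\ref{monotone lemma} gives only monotonicity, not strict monotonicity. To exclude $\beta_c=0$ the paper uses a genuinely nonlinear ingredient: if $\beta=0$ then $u$ solves $-\Delta_{x,y}u=u^{(d+3)/(d-1)}$, and after Brezis--Kato and elliptic regularity one invokes the Liouville classification of Caffarelli--Gidas--Spruck on $\R^{d+1}$ to see that any positive $C^2$-solution is a bubble, hence cannot be $2\pi$-periodic in $y$. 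This contradiction is the missing step, and it is also what makes the subsequent conclusion $\mM(u_\infty)=c$ go through.
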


\begin{proof}
We split our proof into five steps.
\subsubsection*{Step 1: Non-vanishing weak limit of a minimizing sequence}
As a starting point, we firstly show that for all sufficiently large mass, a minimizing sequence $(u_n)_n\subset V(c)$ of $m_c$ shall always weakly converge (up to a subsequence and $\R^d_x$-translations) to a non-vanishing function $u$ in $H_{x,y}^1$. We start with showing that $(u_n)_n$ is a bounded sequence in $H_{x,y}^1$. Indeed, using Lemma \ref{cor lower bound 1} and the fact that $\mK(u_n)=0$ we infer that for all sufficiently large $n$
\begin{align}\label{gn ineq 5}
\infty>2m_c\geq\mH(u_n)=\mH(u_n)-\frac{d-1}{2d}\mK(u_n)=\frac{1}{2}\|\pt_y u_n\|_2^2+\frac{1}{2d}\|\nabla_x u_n\|_2^2,
\end{align}
which in turn implies the $H_{x,y}^1$-boundedness of $(u_n)_n$. Next define $m(u):=(2\pi)^{-1}\int_{\T}u(y)\,dy$. Using triangular inequality and $\mK(u_n)=0$ we obtain
\begin{align}\label{gn ineq 4}
\|m(u_n)\|_{\frac{2(d+1)}{d-1}}&\geq \|u_n\|_{\frac{2(d+1)}{d-1}}-\|u_n-m(u_n)\|_{\frac{2(d+1)}{d-1}}\nonumber\\
&= \bg(\frac{d+1}{d}\bg)^{\frac{d-1}{2(d+1)}}\|\nabla_x u_n\|_2^{\frac{d-1}{d+1}}-\|u_n-m(u_n)\|_{\frac{2(d+1)}{d-1}}.
\end{align}
To handle the term $\|u_n-m(u_n)\|_{\frac{2(d+1)}{d-1}}$, we firstly recall the following well-known Sobolev's inequality on $\T$ for functions with zero mean (see for instance \cite{sobolev_torus}):
\begin{align}\label{sobolev torus 1}
\|u-m(u)\|_{L_y^{\frac{2(d+1)}{d-1}}}\lesssim\|u\|_{\dot{H}_y^{\frac{1}{d+1}}}.
\end{align}
Writing $u$ into the Fourier series $u(x,y)=\sum_{k} e^{iky}u_k(x)$ w.r.t $y$ and followed by \eqref{sobolev torus 1}, Minkowski, Gagliardo-Nirenberg on $\R^d$ and H\"older we obtain
\begin{align}\label{gn ineq 3}
\|u-m(u)\|_{\frac{2(d+1)}{d-1}}&\lesssim \|(|k|^{\frac{1}{d+1}}|u_k|)\|_{L_x^{2(d+1)/(d-1)}\ell_k^2}
\leq \|(|k|^{\frac{1}{d+1}}|u_k|)\|_{\ell_k^2 L_x^{2(d+1)/(d-1)}}\nonumber\\
&\lesssim \|((|k|\|u_k\|_{L_x^2})^{\frac{1}{d+1}}\|\nabla_x u_k\|_{L_x^2}^{\frac{d}{d+1}})_k\|_{\ell_k^2}\nonumber\\
&\lesssim \|(k\|u_k\|_{L_x^2})_k\|^{\frac{1}{d+1}}_{\ell_k^2}\|(\|\nabla_x u_k\|_{L_x^2})_k\|^{\frac{d}{d+1}}_{\ell_k^2}
=\|\pt_y u\|_2^{\frac{1}{d+1}}\|\nabla_x u\|_2^{\frac{d}{d+1}}.
\end{align}
Thus \eqref{gn ineq 5}, \eqref{gn ineq 4} and \eqref{gn ineq 3} imply that there exist some positive constants $C=C(d)>0$ such that for all sufficiently large $n$
\begin{align}\label{addition estimate}
\|m(u_n)\|_{\frac{2(d+1)}{d-1}}\geq C\|\nabla_x u_n\|_2^{\frac{d-1}{d+1}}(1-\|\pt_y u\|_2^{\frac{1}{d+1}}\|\nabla_x u_n\|_2^{\frac{1}{d+1}}).
\end{align}
By assuming that a function $u\in V(c)$ is independent of $y$ we infer that $m_c\leq 2\pi \wm_{(2\pi)^{-1}c}$. Combining with Lemma \ref{lem wmc property} (iv) we deduce $\lim_{c\to\infty}m_c=0$. Thus using \eqref{gn ineq 5} we know that for all sufficiently large $c$ there exists some sufficiently large $N=N(c)\in\N$ such that $\|\pt_y u_n\|_2^{\frac{1}{d+1}}\|\nabla_x u_n\|_2^{\frac{1}{d+1}}\leq \frac{1}{2}$ for all $n\geq N$. On the other hand, by Lemma \ref{lemma gn additive} and $\mK(u_n)=0$ we obtain
\begin{align}\label{addition estimate 2}
\|\nabla_x u_n\|_2^2=\frac{d}{d+1} \|u_n\|_{2+4/(d-1)}^{2+4/(d-1)}\lesssim \|\nabla_x u_n\|_2^{\frac{2d}{d-1}},
\end{align}
and we conclude that $\liminf_{n\to\infty}\|\nabla_x u_n\|_2>0$. Summing up we infer that
\begin{align*}
\liminf_{n\to\infty}\|m(u_n)\|_{\frac{2(d+1)}{d-1}}\gtrsim \liminf_{n\to\infty}\|\nabla_x u_n\|_{2}^{\frac{d-1}{d+1}}>0
\end{align*}
for all sufficiently large $c$. Since $(u_n)_n$ is a bounded sequence in $H_{x,y}^1$, we know that $(m(u_n))_n$ is a bounded sequence in $H_x^1$. Notice also that the exponent $\frac{2(d+1)}{d-1}$ lies in the intercritical regime $\in(2,2+\frac{4}{d-2})$, thus by the classical concentration compactness arguments on $\R^d$ (see for instance \cite{Lions1984I}) we can find some $(x_n)_n\subset\R^d$ and $v\in H_x^1\setminus\{0\}$ such that
\begin{align*}
m(u_n)(x+x_n)\rightharpoonup v(x)\quad\text{weakly in $H_x^1$}.
\end{align*}
On the other hand, the sequence $(u_n(x+x_n,y))_n$ is also a bounded $H_{x,y}^1$-minimizing sequence of $m_c$. If we denote the weak $H_{x,y}^1$-limit (up to a subsequence) of $(u_n(x+x_n,y))_n$ by $u$, then $m(u)=v$. In particular we infer that $u\neq 0$, which in turn completes the proof of Step 1.
\subsubsection*{Step 2: A Le Coz characterization of $m_c$}
Next, we shall give a different and much handier characterization for $m_c$ due to Le Coz \cite{LeCoz2008} that is more useful for our analysis. Define
\begin{align}
\tilde{m}_c:=\inf\{\mI(u):u\in S(c),\mK(u)\leq 0\},\label{mtilde equal m}
\end{align}
where $\mI(u)$ is the energy functional defined by \eqref{def of mI}. We aim to prove $m_c=\tilde{m}_c$. Let $(u_n)_n\subset S(c)$ be a minimizing sequence for the variational problem $\tilde{m}_c$, i.e.
\begin{align}\label{le coz charac}
\mI(u_n)=\tilde{m}_c+o_n(1),\quad
\mK(u_n)\leq 0\quad\forall\,n\in\N.
\end{align}
By Lemma \ref{monotoneproperty} we know that there exists some $t_n\in(0,1]$ such that $\mK(u_n^{t_n})$ is equal to zero. Thus
\begin{align*}
m_c\leq \mH(u_n^{t_n})=\mI(u_n^{t_n})\leq \mI(u_n)=\tilde{m}_c+o_n(1).
\end{align*}
Sending $n\to\infty$ we infer that $m_c\leq \tilde{m}_c$. On the other hand,
\begin{align*}
\tilde{m}_c
\leq\inf\{\mI(u):u\in V(c)\}
=\inf\{\mH(u):u\in V(c)\}=m_c,
\end{align*}
which completes the proof.

\subsubsection*{Step 3: Existence of a non-negative optimizer of $m_c$}
Define
\begin{align*}
c^*:=\inf\{\hat{c}\in(0,\infty):\text{ $m_c$ has a minimizing sequence with non-vanishing $H_{x,y}^1$-weak limit $\forall\,c\geq\hat{c}$}\}.
\end{align*}
By Step 1 we know that $c^*\in[0,\infty)$ and from now on we shall fix some $c\in(c^*,\infty)$. Let $(u_n)_n\subset V(c)$ be a minimizing sequence of $m_c$ which also possesses a non-vanishing $H_{x,y}^1$-weak limit $u\neq 0$. Using diamagnetic inequality and Step 2, by replacing $u_n$ and $u$ to $|u_n|$ and $|u|$ respectively we may assume that all $u_n$ and $u$ are non-negative, $\mK(u_n)\leq 0$ and $\mI(u_n)$ approaches $\tilde{m}_{c}$. By weakly lower semicontinuity of norms we deduce
\begin{align}
\mM(u)=:c_1\in(0,c],\quad\mI(u)\leq \tilde{m}_c.\label{xia jie}
\end{align}
We next show $\mK(u)\leq 0$. Assume the contrary $\mK(u)>0$. By Brezis-Lieb lemma, $\mK(u_n)\leq 0$ and the fact that $L_{x,y}^2$ is a Hilbert space we infer that
\begin{align*}
\mM(u_n-u)&=c-c_1+o_n(1),\\
\mK(u_n-u)&\leq -\mK(u)+o_n(1).
\end{align*}
Therefore, for all sufficiently large $n$ we know that $\mM(u_n-u)\in(0,c)$ and $\mK(u_n-u)<0$. By Lemma \ref{monotoneproperty} we also know that there exists some $t_n\in(0,1)$ such that $\mK((u_n-u)^{t_n})=0$. Consequently, Lemma \ref{monotone lemma}, Brezis-Lieb lemma and Step 2 yield
\begin{align*}
\tilde{m}_c\leq \mI((u_n-u)^{t_n})<\mI(u_n-u)=\mI(u_n)-\mI(u)+o_n(1)=\tilde{m}_c-\mI(u)+o_n(1).
\end{align*}
Sending $n\to\infty$ and using the non-negativity of $\mI(u)$ we obtain $\mI(u)=0$. This in turn implies $u=0$, which is a contradiction and thus $\mK(u)\leq 0$. If $\mK(u)<0$, then again by Lemma \ref{monotoneproperty} we find some $s\in(0,1)$ such that $\mK(u^s)=0$. But then using Lemma \ref{monotone lemma}, Step 2 and the fact $c_1\leq c$
\begin{align*}
\tilde{m}_{c_1}\leq \mI(u^s)<\mI(u)\leq \tilde{m}_c\leq \tilde{m}_{c_1},
\end{align*}
a contradiction. We conclude therefore $\mK(u)=0$

Thus $u$ is a minimizer of $m_{c_1}$. From Lemma \ref{minimizer is solution} we know that $u$ is a solution of \eqref{standing wave} and it remains to show that the corresponding $\beta$ in \eqref{standing wave} is positive and $\mM(u)=c$.

\subsubsection*{Step 4: Positivity of $\beta$}
First we prove that $\beta$ is non-negative. Testing \eqref{standing wave} with $u$ and followed by eliminating $\|\nabla_x u\|_2^2$ using $\mK(u)=0$ we obtain
\begin{align}
\|\pt_y u\|_2^2+\beta\mM(u)=\frac{1}{d+1}\|u\|_{2(d+1)/(d-1)}^{2(d+1)/(d-1)}.\label{corrected1}
\end{align}
Next, we define the scaling operator $T_\ld$ by
\begin{align}\label{def of t ld}
T_\ld u(x,y):=\ld^{\frac{d-1}{2}}u(\ld x,y).
\end{align}
Then
\begin{align*}
\|T_\ld(\nabla_x u)\|_2^2&=\ld\|\nabla_x u\|_2^2,\\
\|T_\ld u\|_{2(d+1)/(d-1)}^{2(d+1)/(d-1)}&=\ld\|u\|_{2(d+1)/(d-1)}^{2(d+1)/(d-1)},\\
\mK(T_\ld u)&=\ld\mK(u),\\
\|T_\ld (\pt_y u)\|_2^2&=\ld^{-1}\|\pt_y u\|_2^2,\\
\|T_\ld u\|_2^2&=\ld^{-1}\|u\|_2^2.
\end{align*}
Using Lemma \ref{monotone lemma} and the fact that $u_c$ is an optimizer of $m_c$ we infer that $\frac{d}{d\ld}\mH(T_\ld u_c)|_{\ld=1}\geq 0$, or  equivalently
\begin{align}\label{identity abcd}
\|\pt_y u\|_{2}^2\leq \frac{1}{d+1}\|u\|_{2(d+1)/(d-1)}^{2(d+1)/(d-1)}.
\end{align}
Combining with \eqref{corrected1} we deduce $\beta\mM(u)\geq 0$. Since $u\neq 0$, we conclude that $\beta\geq 0$. It is left to show that $\beta=0$ leads to a contradiction, which completes the proof of Step 4. Assume therefore that $u$ satisfies the equation
\begin{align}\label{liouville}
-\Delta_{x,y}u=u^{\frac{d+3}{d-1}}.
\end{align}
By the Brezis-Kato estimate \cite{BrezisKato} (see also \cite[Lem. B.3]{Struwe1996}) and the local $L^p$-elliptic regularity (see for instance \cite[Lem. B.2]{Struwe1996}) we know that $u\in W^{2,p}_{\rm loc}(\R^{d+1})$ for all $p\in[1,\infty)$. Hence by Sobolev embedding we also know that $u$ and $\nabla u$ are of class $L^\infty_{\rm loc}(\R^{d+1})$. Taking $\pt_{j}$ to \eqref{liouville} with $j\in\{1,\cdots,d+1\}$ we obtain
\begin{align*}
-\Delta_{x,y}\pt_j u=\frac{d+3}{d-1}(u^{\frac{4}{d-1}}\pt_j u)\in L^\infty_{\rm loc}(\R^{d+1}).
\end{align*}
Hence by applying the local $L^p$-elliptic regularity again we deduce $u\in W^{3,p}_{\rm loc}(\R^{d+1})$ for all $p\in[1,\infty)$. Consequently, by Sobolev embedding we infer that $u\in C^2(\R^{d+1})$. Using strong maximum principle we also know that $u$ is positive. By \cite{Liouville3}, any positive $C^2$-solution of \eqref{liouville} must be of the form
$$ u(x,y)=b\bg(\frac{a}{1+a^2|(x,y)-(x_0,y_0)|^2}\bg)^{\frac{d-1}{2}}$$
with some $a,b>0$ and $(x_0,y_0)\in\R^{d+1}$. However, in this case $u$ can not be periodic along the $y$-direction, which leads to a contradiction. This completes the proof of Step 4.

\subsubsection*{Step 5: $\mM(u)=c$ and conclusion}
Finally, we prove $\mM(u)=c$. Assume therefore $c_1<c$. By Lemma \ref{monotone lemma} and \eqref{xia jie} we know that $m_{c_1}$ is a local minimizer of the mapping $c\mapsto m_c$, which in turn implies that the inequality in \eqref{identity abcd} is in fact an equality. Now using \eqref{identity abcd} (as an equality) and \eqref{corrected1} we infer that $\beta\mM(u)=0$, which is a contradiction since $\beta>0$ and $u\neq 0$. We thus conclude $\mM(u)=c$. That $u$ is positive follows immediately from the strong maximum principle. This completes the desired proof.
\end{proof}

\subsection{Proof of Lemma \ref{lemma auxiliary}}
Before we finally give the proof of Lemma \ref{lemma auxiliary}, we still need state some usefully auxiliary lemmas.

\begin{lemma}\label{additional exitence lemma}
For all sufficiently large $\ld$ the minimization problem $m_{1,\ld}$ has a positive minimizer.
\end{lemma}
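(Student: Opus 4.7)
The plan is to reduce Lemma \ref{additional exitence lemma} to the already-established Proposition \ref{thm existence of ground state 1} by a simple scaling argument, of the same flavor as the rescaling mentioned in the paragraph preceding the lemma. The slogan is: varying $\ld$ in the constrained problem $m_{1,\ld}$ is equivalent, up to dilation in $x$ and amplitude rescaling, to varying the mass $c$ in $m_c$.

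Concretely, for $c\in(0,\infty)$ I would introduce the map $T_c:H_{x,y}^1\to H_{x,y}^1$ defined by
\begin{align*}
(T_c u)(x,y):=c^{\frac{d-1}{2}}u(cx,y).
\end{align*}
A change of variables gives the elementary identities
\begin{align*}
\mM(T_c u)=c^{-1}\mM(u),\qquad \|\nabla_x T_c u\|_2^2=c\,\|\nabla_x u\|_2^2,\qquad \|\pt_y T_c u\|_2^2=c^{-1}\|\pt_y u\|_2^2,
\end{align*}
and, using $\tfrac{p(d-1)}{2}=d+1$ for $p=2+\tfrac{4}{d-1}$,
\begin{align*}
\|T_c u\|_{p}^{p}=c\,\|u\|_{p}^{p},\qquad \mK(T_c u)=c\,\mK(u).
\end{align*}
Hence $T_c$ bijects $V(c)$ onto $V(1)$. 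Plugging these identities into the definitions of $\mH_\ld$ and $\mH$ and choosing the scaling parameter $\ld=c^2$ yields the clean identity
\begin{align*}
\mH_{c^2}(T_c u)=\tfrac{c}{2}\|\pt_y u\|_2^2+\tfrac{c}{2}\|\nabla_x u\|_2^2-c\cdot\tfrac{d-1}{2(d+1)}\|u\|_{p}^{p}=c\,\mH(u).
\end{align*}

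Taking infima over the respective constraint sets, this immediately gives $m_{1,c^2}=c\cdot m_c$, and, more importantly, shows that $u_c\in V(c)$ is a minimizer of $m_c$ if and only if $T_c u_c$ is a minimizer of $m_{1,c^2}$. Since the map $T_c$ is a positivity-preserving dilation, positivity is preserved under the correspondence. Finally, by Proposition \ref{thm existence of ground state 1} the problem $m_c$ admits a positive optimizer $u_c$ for every $c\in(c^*,\infty)$; setting $\ld:=c^2$ and applying the above correspondence, we obtain a positive optimizer of $m_{1,\ld}$ for every $\ld\in((c^*)^2,\infty)$, which is exactly the claim.

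There is no real obstacle here: the entire substance of the result is contained in the non-vanishing weak-limit analysis carried out in the proof of Proposition \ref{thm existence of ground state 1}, and the present lemma merely relabels the large-mass regime $c>c^*$ as the large-$\ld$ regime $\ld>(c^*)^2$. The only routine items to verify are the five scaling identities above; these follow from the substitution $x'=cx$ together with the dimensional bookkeeping $p(d-1)/2=d+1$.
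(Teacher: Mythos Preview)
The proposal is correct and follows essentially the same approach as the paper: the paper also uses the scaling map $T_c u(x,y)=c^{\frac{d-1}{2}}u(cx,y)$ to establish $m_c=c^{-1}m_{1,c^2}$ and then invokes Proposition~\ref{thm existence of ground state 1}. Your version simply spells out the scaling identities that the paper leaves as ``direct calculation''.
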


\begin{proof}
It is easy to check that $u\mapsto T_{c} u$ (where $T_{c} u$ is given by \eqref{def of t ld}) defines a bijection between $V(c)$ and $V(1)$. Direct calculation also shows that $\mH(u)=c^{-1}\mH_{c^2}(T_{c}u)$, thus $m_{c}=c^{-1}m_{1,c^2}$. The desired claim follows from the existence claim of positive optimizers of $m_c$ for large $c$ deduced in Proposition \ref{thm existence of ground state 1}.
\end{proof}

\begin{lemma}\label{auxiliary lemma 1}
We have
\begin{align}\label{limit ld to infty energy sec4}
\lim_{\ld\to\infty}m_{1,\ld}=2\pi\wm_{(2\pi)^{-1}}.
\end{align}
Additionally, for all sufficiently large $\ld$ let $u_\ld\in V(1)$ be a positive optimizer of $m_{1,\ld}$ which also satisfies
\begin{align}
-\Delta_x u_\ld-\ld \pt_y^2 u_\ld+\beta_\ld u_\ld=|u_\ld|^{\frac{4}{d-1}} u_\ld\quad\text{on $\R^d\times \T$}\label{vanishing 3 sec4}
\end{align}
for some $\beta_\ld>0$. Then
\begin{align}
\lim_{\ld\to\infty}\ld\|\pt_y u_\ld\|_2^2=0.\label{vanishing sec4}
\end{align}
\end{lemma}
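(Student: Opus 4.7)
The plan is to first establish the upper bound $\limsup_{\ld\to\infty}m_{1,\ld}\leq 2\pi\wm_{(2\pi)^{-1}}$ via a $y$-independent test function, and then to promote this to equality along with \eqref{vanishing sec4} by splitting each optimizer $u_\ld$ into its zero Fourier mode $m(u_\ld):=(2\pi)^{-1}\int_\T u_\ld\,dy$ and a remainder $w_\ld:=u_\ld-m(u_\ld)$ that must vanish strongly as $\ld\to\infty$. For the upper bound, if $P:=P_{(2\pi)^{-1}}\in\widehat{V}((2\pi)^{-1})$ denotes the ground state supplied by Lemma \ref{lem wmc property}(i), then lifting $P$ to a $y$-independent function on $\R^d\times\T$ gives $\mM(P)=2\pi\wmM(P)=1$, $\mK(P)=2\pi\wmK(P)=0$, and $\mH_\ld(P)=2\pi\wmH(P)=2\pi\wm_{(2\pi)^{-1}}$ for every $\ld>0$, so $P$ is an admissible competitor for $m_{1,\ld}$ regardless of $\ld$.

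For the matching lower bound, fix $\ld_n\to\infty$, let $u_n:=u_{\ld_n}$ be the positive optimizers granted by Lemma \ref{additional exitence lemma}, and write $u_n=m(u_n)+w_n$. Combining $\mK(u_n)=0$ with the algebraic identity
\begin{equation*}
m_{1,\ld_n}=\mH_{\ld_n}(u_n)-\tfrac{d-1}{2d}\mK(u_n)=\tfrac{\ld_n}{2}\|\pt_y u_n\|_2^2+\tfrac{1}{2d}\|\nabla_x u_n\|_2^2
\end{equation*}
and the upper bound already established, I immediately extract the two a priori estimates $\ld_n\|\pt_y u_n\|_2^2\lesssim 1$ and $\|\nabla_x u_n\|_2\lesssim 1$; in particular $\|\pt_y w_n\|_2=\|\pt_y u_n\|_2\lesssim \ld_n^{-1/2}$. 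Since $w_n$ has vanishing $y$-mean, the Fourier-series argument leading to \eqref{gn ineq 3} yields
\begin{equation*}
\|w_n\|_{2+4/(d-1)}\lesssim \|\pt_y w_n\|_2^{1/(d+1)}\|\nabla_x w_n\|_2^{d/(d+1)}\lesssim \ld_n^{-1/(2(d+1))}\to 0,
\end{equation*}
and the Poincar\'e inequality on $\T$ yields $\|w_n\|_2\lesssim\|\pt_y w_n\|_2\to 0$. In particular $\wmM(m(u_n))\to(2\pi)^{-1}$.

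Since $\|w_n\|_{2+4/(d-1)}\to 0$ while $\|u_n\|_{2+4/(d-1)}$ remains bounded, a standard Brezis--Lieb type splitting combined with the fact that $m(u_n)$ is $y$-independent gives $\|u_n\|_{2+4/(d-1)}^{2+4/(d-1)}=2\pi\|m(u_n)\|_{L_x^{2+4/(d-1)}}^{2+4/(d-1)}+o_n(1)$. Feeding this into $\mK(u_n)=0$ yields $2\pi\wmK(m(u_n))+\|\nabla_x w_n\|_2^2=o_n(1)$, hence $\wmK(m(u_n))\leq o_n(1)$. Applying Lemma \ref{monotoneproperty} on $\R^d$ to $m(u_n)$ produces $t_n>0$ with $\wmK((m(u_n))^{t_n})=0$, and from the explicit relation $t_n^{2/(d-1)}=\frac{d+1}{d}\|\nabla_x m(u_n)\|_2^2/\|m(u_n)\|_{2+4/(d-1)}^{2+4/(d-1)}$ together with the lower bound $\|m(u_n)\|_{2+4/(d-1)}\gtrsim 1$ inherited from $\mK(u_n)=0$ and \eqref{addition estimate 2}, one sees $t_n\leq 1+o_n(1)$. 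Using $(m(u_n))^{t_n}\in\widehat{V}(\wmM(m(u_n)))$, the scaling identity $\wmI(v^t)=t^{2d/(d-1)}\wmI(v)$, and the continuity of $c\mapsto\wm_c$ from Lemma \ref{lem wmc property}(iv), I conclude $\wmI(m(u_n))\geq\wm_{\wmM(m(u_n))}+o_n(1)\to\wm_{(2\pi)^{-1}}$.

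Using $\mK(u_n)=0$ once more,
\begin{equation*}
m_{1,\ld_n}=\mI_{\ld_n}(u_n)=\tfrac{\ld_n}{2}\|\pt_y u_n\|_2^2+\tfrac{1}{2(d+1)}\|u_n\|_{2+4/(d-1)}^{2+4/(d-1)}=\tfrac{\ld_n}{2}\|\pt_y u_n\|_2^2+2\pi\wmI(m(u_n))+o_n(1),
\end{equation*}
which combined with the previous paragraph gives $m_{1,\ld_n}\geq\tfrac{\ld_n}{2}\|\pt_y u_n\|_2^2+2\pi\wm_{(2\pi)^{-1}}+o_n(1)$. Paired with the upper bound from the first paragraph, this single chain of inequalities simultaneously forces $m_{1,\ld_n}\to 2\pi\wm_{(2\pi)^{-1}}$ (yielding \eqref{limit ld to infty energy sec4}) and $\ld_n\|\pt_y u_n\|_2^2\to 0$ (yielding \eqref{vanishing sec4}). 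The main obstacle, exactly as foreshadowed in the remarks after Theorem \ref{thm threshold mass}, is the lower bound step: direct concentration-compactness \`a la \cite{TTVproduct2014,Luo_inter} fails for $(u_n)_n$ in the energy-critical regime, which is why one is forced to project onto the zero Fourier mode in $y$ and quantitatively convert the $\ld_n^{-1}$-smallness of $\|\pt_y u_n\|_2^2$ into $L^{2+4/(d-1)}$-smallness of the remainder $w_n$ via the Sobolev inequality on $\T$.
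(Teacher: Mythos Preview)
Your argument is correct and takes a genuinely different route from the paper's. The paper proceeds by first showing $\|\pt_y u_\ld\|_2\to 0$ (by contradiction), then applies concentration compactness on $\R^d$ to $(m(u_\ld))_\ld$ to extract---after spatial translation---a nonzero weak $H^1_{x,y}$-limit $u$, passes to the limit in the Euler--Lagrange equation \eqref{vanishing 3 sec4} tested against $\phi\in C_c^\infty(\R^d)$ to identify $u$ as a solution of \eqref{vanishing 4 sec4} with $\wmK(u)=0$, and concludes via weak lower semicontinuity. You instead bypass both concentration compactness and the PDE entirely: the quantitative bound $\|\pt_y u_n\|_2\lesssim\ld_n^{-1/2}$ from the energy identity, fed through \eqref{gn ineq 3}, forces $\|w_n\|_{2+4/(d-1)}\to 0$, and then a direct rescaling of $m(u_n)$ into $\widehat V(\wmM(m(u_n)))$ together with continuity of $c\mapsto\wm_c$ furnishes the matching lower bound without ever extracting a limit.

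Your approach is more elementary and self-contained for the stated lemma. The trade-off is that the paper's route simultaneously produces the specific limit $u$ and the strong $H^1_{x,y}$-convergence $u_\ld\to u$ (Lemma \ref{strong convergence u ld}) together with $\beta_\ld\to\beta>0$, all of which are then reused in Lemma \ref{lemma no dependence} to show $\pt_y u_\ld=0$ for large $\ld$. Your argument proves Lemma \ref{auxiliary lemma 1} as stated but would require a separate step to recover these byproducts for the downstream application.
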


\begin{remark}
That the frequency exponent $\beta_\ld$ is positive follows from Lemma \ref{additional exitence lemma} and Step 4 of the proof of Proposition \ref{thm existence of ground state 1}
\end{remark}

\begin{proof}
By assuming that a candidate in $V(1)$ is independent of $y$ we already conclude
\begin{align}
m_{1,\ld}\leq 2\pi \wm_{(2\pi)^{-1}}.\label{upper bound sec4}
\end{align} Next we prove
\begin{align}
\lim_{\ld\to\infty}\|\pt_y u_\ld\|_2^2=0.\label{vanishing1 sec4}
\end{align}
Suppose that \eqref{vanishing1 sec4} does not hold. Then we must have
\begin{align*}
\lim_{\ld\to\infty}\ld\|\pt_y u_\ld\|_2^2=\infty.
\end{align*}
Since $\mK(u_\ld)=0$,
\begin{align}
m_{1,\ld}=\mH_\ld(u_\ld)-\frac{d-1}{2 d}\mK(u_\ld)
=\frac{\ld}{2}\|\pt_y u_\ld\|_2^2
+\frac{1}{2d}\|\nabla_x u_\ld\|_2^2
\geq \frac{\ld}{2}\|\pt_y u_\ld\|_2^2\to\infty\label{contradiction sec4}
\end{align}
as $\ld\to\infty$, which contradicts \eqref{upper bound sec4} and in turn proves \eqref{vanishing1 sec4}. Using \eqref{upper bound sec4} and \eqref{contradiction sec4} we infer that
\begin{align}\label{upper bound 2 sec4}
\|\nabla_x u_\ld\|_2^2\lesssim  m_{1,\ld}\leq  2\pi\wm_{(2\pi)^{-1}}<\infty.
\end{align}
Therefore $(u_\ld)_\ld$ is a bounded sequence in $H_{x,y}^1$, whose weak limit is denoted by $u$. Since $\|\pt_y u_\ld\|_2^2\to 0$, using \eqref{addition estimate} and \eqref{addition estimate 2} we conclude that
\begin{align*}
\liminf_{\ld\to\infty}\|m(u_\ld)\|_{2+4/(d-1)}^{2+4/(d-1)}>0.
\end{align*}
Thus arguing as in Step 1 of Proposition \ref{thm existence of ground state 1} we may also assume that $u\neq 0$. By \eqref{vanishing1 sec4} we know that $u$ is independent of $y$ and thus $u\in H_x^1$. Moreover, using weakly lower semicontinuity of norms we know that $\wmM(u)\in(0,(2\pi)^{-1}]$. On the other hand, using $\mK(u_\ld)=0$, \eqref{vanishing 3 sec4} and $\mM(u_\ld)=1$ we obtain
\begin{align*}
\beta_\ld=\frac{1}{d+1}\|u_\ld\|_{2(d+1)/(d-1)}^{2(d+1)/(d-1)}-\ld\|\pt_y u_\ld\|_2^2\lesssim \|u_\ld\|_{2(d+1)/(d-1)}^{2(d+1)/(d-1)}.
\end{align*}
Thus $(\beta_\ld)_\ld$ is a bounded sequence in $(0,\infty)$, whose limit is denoted by $\beta$. We now test \eqref{vanishing 3 sec4} with $\phi\in C_c^\infty(\R^d)$ and integrate both sides over $\R^d\times\T$. Notice particularly that the term $\int_{\R^d\times\T}  \pt_y^2 u_\ld \phi\,dxdy=0$ for any $\ld>0$ since $\phi$ is independent of $y$. Using the weak convergence of $u_\ld$ to $u$ in $H_{x,y}^1$, by sending $\ld\to\infty$ we obtain
\begin{align}
-\Delta_x u+\beta u=|u|^{\frac{4}{d-1}} u\quad\text{in $\R^d$}.\label{vanishing 4 sec4}
\end{align}
In particular, by Lemma \ref{lem wmc property} we know that $\wmK(u)=0$ and consequently $\beta>0$. Combining with weakly lower semicontinuity of norms we deduce
$$2\pi\wmH(u)=2\pi\wmI(u)\leq\liminf_{\ld\to\infty}\mI_\ld(u_\ld)= \liminf_{\ld\to\infty}\mH_\ld(u_\ld)\leq 2\pi \wm_{(2\pi)^{-1}},$$
where $\wmH(u)$, $\wmI(u)$, $\mI_\ld(u)$ and $\mH_\ld(u)$ are the quantities defined by \eqref{def of wmH}, \eqref{def of wmI}, \eqref{def of I ld} and \eqref{def modified energy} respectively. However, by Lemma \ref{lem wmc property} the mapping $c\mapsto \wm_c$ is strictly monotone decreasing on $(0,\infty)$, from which we conclude that $\wmM(u)=(2\pi)^{-1}$ and $u$ is an optimizer of $\wm_{(2\pi)^{-1}}$. Using the weakly lower semicontinuity of norms we obtain
\begin{align}
m_{1,\ld}&=\mH_\ld(u_\ld)=\mH_\ld(u_\ld)-\frac{d-1}{2 d}\mK(u)
=\frac{\ld}{2}\|\pt_y u_\ld\|_2^2+\frac{1}{2d}\|\nabla_x u_\ld\|_2^2\nonumber\\
&\geq \frac{1}{2d}\|\nabla_x u_\ld\|_2^2
\geq \frac{2\pi}{2d}\|\nabla_x u\|_{L_x^2}^2+o_\ld(1)
=2\pi \wmH(u)+o_\ld(1)\geq 2\pi\wm_{(2\pi)^{-1}}+o_\ld(1).\label{vanishing 5 sec4}
\end{align}
Letting $\ld\to\infty$ and taking \eqref{upper bound sec4} into account yield \eqref{limit ld to infty energy sec4}. Finally, \eqref{vanishing sec4} follows directly from the previous calculation by not neglecting $\ld\|u_\ld\|_2^2$ therein. This completes the desired proof.
\end{proof}

\begin{lemma}\label{strong convergence u ld}
Let $u_\ld$ and $u$ be the functions given in the proof of Lemma \ref{auxiliary lemma 1}. Then $u_\ld\to u$ strongly in $H_{x,y}^1$.
\end{lemma}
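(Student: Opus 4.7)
The plan is to upgrade the weak convergence $u_\ld \rightharpoonup u$ to strong $H^1_{x,y}$-convergence by establishing norm convergence $\|u_\ld\|_{H^1_{x,y}} \to \|u\|_{H^1_{x,y}}$; once this is in hand, the Hilbert space structure of $H^1_{x,y}$ automatically upgrades weak to strong convergence via $\|u_\ld - u\|^2 = \|u_\ld\|^2 - 2\,\mathrm{Re}\,\langle u_\ld, u\rangle + \|u\|^2 \to 0$.

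First I would split the squared $H^1_{x,y}$-norm into its $L^2$, $\nabla_x$ and $\pt_y$ parts and dispose of the first and third parts directly. The mass is trivial: $\|u_\ld\|_2^2 = \mM(u_\ld) = 1$ for every $\ld$, while $\wmM(u) = (2\pi)^{-1}$ together with the $y$-independence of $u$ gives $\|u\|_2^2 = 2\pi\wmM(u) = 1$. The $\pt_y$-part is also essentially already done: \eqref{vanishing1 sec4} gives $\|\pt_y u_\ld\|_2 \to 0$, matching $\pt_y u = 0$.

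The main step is the convergence $\|\nabla_x u_\ld\|_2 \to \|\nabla_x u\|_2$. Using $\mK(u_\ld)=0$ and the algebraic manipulation from \eqref{contradiction sec4} one has
\begin{equation*}
m_{1,\ld} = \mH_\ld(u_\ld) - \frac{d-1}{2d}\mK(u_\ld) = \frac{\ld}{2}\|\pt_y u_\ld\|_2^2 + \frac{1}{2d}\|\nabla_x u_\ld\|_2^2.
\end{equation*}
By Lemma \ref{auxiliary lemma 1} the left-hand side converges to $2\pi\wm_{(2\pi)^{-1}}$ while $\ld\|\pt_y u_\ld\|_2^2 \to 0$, so $\tfrac{1}{2d}\|\nabla_x u_\ld\|_2^2 \to 2\pi\wm_{(2\pi)^{-1}}$. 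On the other hand, $u$ optimizes $\wm_{(2\pi)^{-1}}$ and satisfies $\wmK(u)=0$, hence
\begin{equation*}
2\pi\wm_{(2\pi)^{-1}} = 2\pi\Bigl(\wmH(u) - \frac{d-1}{2d}\wmK(u)\Bigr) = \frac{2\pi}{2d}\|\nabla_x u\|_{L^2_x}^2 = \frac{1}{2d}\|\nabla_x u\|_2^2,
\end{equation*}
where the last equality once again uses the $y$-independence of $u$. Comparing the two displays gives $\|\nabla_x u_\ld\|_2 \to \|\nabla_x u\|_2$.

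Combining the three convergences yields $\|u_\ld\|_{H^1_{x,y}} \to \|u\|_{H^1_{x,y}}$, which together with $u_\ld \rightharpoonup u$ produces the claimed strong convergence. I do not anticipate a genuine obstacle, since all of the ingredients have essentially already been produced inside the proof of Lemma \ref{auxiliary lemma 1}; the only point requiring real attention is the bookkeeping of the factor $2\pi$ arising from regarding the $y$-independent limit $u$ simultaneously as an element of $H^1_x$ and of $H^1_{x,y}$.
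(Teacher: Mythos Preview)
Your proof is correct and follows essentially the same approach as the paper: both argue that weak convergence plus norm convergence in the Hilbert space $H^1_{x,y}$ yields strong convergence, with the norm convergence extracted from the chain of inequalities in \eqref{vanishing 5 sec4} becoming equalities once $m_{1,\ld}\to 2\pi\wm_{(2\pi)^{-1}}$ and $\ld\|\pt_y u_\ld\|_2^2\to 0$ are known. Your write-up simply makes the three pieces ($L^2$, $\nabla_x$, $\pt_y$) explicit, which the paper leaves as a one-line remark.
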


\begin{proof}
In fact, in the proof of Lemma \ref{auxiliary lemma 1} we see that all the inequalities involving the weakly lower semicontinuity of norms are in fact equalities. This particularly implies $\|u_\ld\|_{H_{x,y}^1}\to \|u\|_{H_{x,y}^1}=2\pi\|u\|_{H_x^1}$ as $\ld\to\infty$, which in turn implies the strong convergence of $u_\ld$ to $u$ in $H_{x,y}^1$.
\end{proof}

\begin{lemma}\label{lemma no dependence}
There exists some $\ld_0$ such that $\pt_y u_\ld=0$ for all $\ld>\ld_0$.
\end{lemma}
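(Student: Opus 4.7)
The plan is to decompose $u_\ld$ into its $y$-average and the zero-mean remainder, derive an elliptic equation for the remainder, and exploit the spectral gap of $-\ld\pt_y^2$ on $y$-zero-mean functions together with the strong convergence from Lemma \ref{strong convergence u ld} to force the remainder to vanish for large $\ld$.

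Concretely, I would write $u_\ld = m_\ld + v_\ld$ with $m_\ld := m(u_\ld) = (2\pi)^{-1}\int_\T u_\ld\,dy$ ($y$-independent) and $v_\ld := u_\ld - m_\ld$ (zero $y$-mean). Averaging \eqref{vanishing 3 sec4} over $\T$ yields $-\Delta_x m_\ld + \beta_\ld m_\ld = m(u_\ld^p)$ with $p := (d+3)/(d-1)$, and subtraction gives
\begin{align*}
-\Delta_x v_\ld - \ld\pt_y^2 v_\ld + \beta_\ld v_\ld = u_\ld^p - m(u_\ld^p).
\end{align*}
Testing against $v_\ld$ and using that both $m(u_\ld^p)$ and $m_\ld^p$ are $y$-independent (hence orthogonal in $L^2_{x,y}$ to the zero-$y$-mean function $v_\ld$), we arrive at $E_\ld = \int (u_\ld^p - m_\ld^p) v_\ld$, where
\begin{align*}
E_\ld := \|\nabla_x v_\ld\|_2^2 + \ld\|\pt_y v_\ld\|_2^2 + \beta_\ld\|v_\ld\|_2^2.
\end{align*}
The pointwise bound $|u_\ld^p - m_\ld^p| \lesssim (m_\ld^{p-1} + |v_\ld|^{p-1})|v_\ld|$ and H\"older with exponent $q := 2(d+1)/(d-1)$ then give $E_\ld \lesssim \|m_\ld\|_q^{p-1}\|v_\ld\|_q^2 + \|v_\ld\|_q^q$.

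The key step is to absorb $\|v_\ld\|_q$ into $E_\ld$ by extracting a negative power of $\ld$. Because $v_\ld$ has zero $y$-mean, the Poincar\'e inequality on $\T$ gives $\|v_\ld\|_2 \leq \|\pt_y v_\ld\|_2$, and Lemma \ref{lemma gn additive} applied to $v_\ld$ yields $\|v_\ld\|_q \lesssim \|\pt_y v_\ld\|_2^{1/(d+1)}\|\nabla_x v_\ld\|_2^{d/(d+1)}$. Combining with $\|\nabla_x v_\ld\|_2^2 \leq E_\ld$ and $\|\pt_y v_\ld\|_2^2 \leq \ld^{-1} E_\ld$ (valid once $\beta_\ld \geq \beta/2$, which holds for large $\ld$ by the convergence $\beta_\ld \to \beta > 0$ established in Lemma \ref{auxiliary lemma 1}), we obtain $\|v_\ld\|_q^2 \lesssim \ld^{-1/(d+1)} E_\ld$ and $\|v_\ld\|_q^q \lesssim \ld^{-1/(d-1)} E_\ld^{(d+1)/(d-1)}$. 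Since $\|m_\ld\|_q$ is uniformly bounded by Sobolev embedding, the resulting inequality reads
\begin{align*}
E_\ld \leq C_1 \ld^{-1/(d+1)} E_\ld + C_2 \ld^{-1/(d-1)} E_\ld^{(d+1)/(d-1)}.
\end{align*}

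Assume for contradiction that $v_\ld \not\equiv 0$ along some sequence $\ld \to \infty$; dividing by $E_\ld > 0$ and noting $(d+1)/(d-1) - 1 = 2/(d-1)$ yields $1 \leq C_1 \ld^{-1/(d+1)} + C_2 \ld^{-1/(d-1)} E_\ld^{2/(d-1)}$. From Lemma \ref{strong convergence u ld} combined with \eqref{vanishing sec4} one has $E_\ld \to 0$ as $\ld \to \infty$, so the right-hand side tends to zero, a contradiction. Therefore $\pt_y u_\ld = \pt_y v_\ld = 0$ for all $\ld$ beyond some $\ld_0$. The main technical obstacle is ensuring that both nonlinear contributions really carry genuine negative powers of $\ld$; this succeeds precisely because the Gagliardo--Nirenberg exponents in Lemma \ref{lemma gn additive} attach to $\pt_y v_\ld$ with the correct multiplicity, while the Poincar\'e spectral gap on $\T$ allows us to trade $\|v_\ld\|_2$ for $\|\pt_y v_\ld\|_2$ at no loss.
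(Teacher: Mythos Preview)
Your argument is correct and proceeds along a genuinely different line from the paper's proof. The paper differentiates the Euler--Lagrange equation \eqref{vanishing 3 sec4} in $y$ to obtain a linear equation for $w_\ld:=\pt_y u_\ld$, tests against $w_\ld$, and splits the potential $|u_\ld|^{4/(d-1)}$ into the frozen coefficient $|u|^{4/(d-1)}$ (which is in $L^\infty_x$ by Lemma \ref{lem wmc property}(ii)) plus a remainder controlled by the strong convergence of Lemma \ref{strong convergence u ld}; the spectral gap $(\ld-1)\|\pt_y w_\ld\|_2^2$ then dominates the frozen part for large $\ld$. You instead work at the level of the nonlinear equation for the zero-mean part $v_\ld$ and close via the scale-invariant Gagliardo--Nirenberg inequality (Lemma \ref{lemma gn additive}) together with Poincar\'e on $\T$, which yields explicit factors $\ld^{-1/(d+1)}$ and $\ld^{-1/(d-1)}$ in front of the two nonlinear contributions. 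The tradeoff: the paper's route is slightly shorter and exploits the $W^{3,p}$-regularity of the limiting ground state, whereas your route avoids any pointwise regularity input on $u$ and relies only on energy-level quantities already used elsewhere in the argument; in particular your approach would survive in settings where the limit is not known to be bounded. One small remark: you do not actually need $E_\ld\to 0$ in the final step---boundedness of $E_\ld$ (immediate from \eqref{vanishing sec4} and the $H^1_{x,y}$-boundedness of $u_\ld$) already contradicts $E_\ld^{2/(d-1)}\gtrsim \ld^{1/(d-1)}$.
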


\begin{proof}
Let $w_\ld:=\pt_y u_\ld$. Then taking $\pt_y$-derivative to \eqref{vanishing 3 sec4} we obtain
\begin{align}\label{no dependence 1}
-\Delta_x w_\ld-\ld \pt^2_y w_\ld+\beta_\ld w_\ld=\pt_y(|u_\ld|^{\frac{4}{d-1}} u_\ld)=\frac{d+3}{d-1}|u_\ld|^{\frac{4}{d-1}} w_\ld.
\end{align}
Testing \eqref{no dependence 1} with $w_\ld$ and rewriting suitably, we infer that
\begin{align}
0&=\|\nabla_x w_\ld\|_2^2+\ld\|\pt_y w_\ld\|_2^2+\beta_\ld \|w_\ld\|_2^2-{\frac{d+3}{d-1}}\int_{\R^d\times \T}|u_\ld|^{\frac{4}{d-1}} |\bar{w}_\ld|^2\,dxdy\nonumber\\
&=(\ld-1)\|\pt_y w_\ld\|_2^2-{\frac{d+3}{d-1}}\int_{\R^d\times \T}|u|^{\frac{4}{d-1}} |w_\ld|^2\,dxdy\label{no dependence 2}\\
&+\beta_\ld \|w_\ld\|_2^2+\|\nabla_{x,y}w_\ld\|_2^2\label{no dependence 4}\\
&-{\frac{d+3}{d-1}}\int_{\R^d\times \T}(|u_\ld|^{\frac{4}{d-1}} -|u|^{\frac{4}{d-1}})|w_\ld|^2\,dxdy.\label{no dependence 3}
\end{align}
For \eqref{no dependence 2}, we firstly point out that by Lemma \ref{lem wmc property} (ii) and Sobolev embedding we have $u\in L^\infty(\R^d)$. On the other hand, since $\int_\T w_\ld\,dy=0$, we have $\|w_\ld\|_2\leq \|\pt_y w_\ld\|_2$. Summing up, we conclude that
\begin{align*}
\eqref{no dependence 2}\geq (\ld-1-{\frac{d+3}{d-1}}\|u\|^{\frac{4}{d-1}}_{L_x^\infty})\|\pt_y w_\ld\|_2^2\geq 0
\end{align*}
for all sufficiently large $\ld$. For \eqref{no dependence 3}, we discuss the cases ${\frac{4}{d-1}}\leq 1$ and ${\frac{4}{d-1}}>1$ separately. For ${\frac{4}{d-1}}\leq 1$, we estimate the second term in \eqref{no dependence 3} using subadditivity of concave function, H\"older's inequality, Lemma \ref{strong convergence u ld} and the Sobolev embedding $H_{x,y}^1\hookrightarrow L_{x,y}^{2(d+1)/(d-1)}$:
\begin{align*}
&\,\int_{\R^d\times \T}(|u_\ld|^{\frac{4}{d-1}} -|u|^{\frac{4}{d-1}})|w_\ld|^2\,dxdy\nonumber\\
\leq&\,\int_{\R^d\times \T}|u_\ld-u|^{\frac{4}{d-1}}|w_\ld|^2\,dxdy\nonumber\\
\leq& \,\|u_\ld-u\|_{2+4/(d-1)}^{\frac{4}{d-1}}\|w_\ld\|_{2+4/(d-1)}^2\leq o_\ld(1)\|w_\ld\|^2_{H_{x,y}^1}.
\end{align*}
The case ${\frac{4}{d-1}}>1$ can be similarly estimated as follows:
\begin{align*}
&\,\int_{\R^d\times \T}(|u_\ld|^{\frac{4}{d-1}} -|u|^{\frac{4}{d-1}})|w_\ld|^2\,dxdy\nonumber\\
\lesssim&\,\int_{\R^d\times \T}|u_\ld-u||w_\ld|^2(|u_\ld|^{{\frac{4}{d-1}}-1}+|u|^{{\frac{4}{d-1}}-1})\,dxdy\nonumber\\
\leq& \,\|u_\ld-u\|_{2+4/(d-1)}(\|u_\ld\|^{{\frac{4}{d-1}}-1}_{2+4/(d-1)}+\|u\|^{{\frac{4}{d-1}}-1}_{2+4/(d-1)})\|w_\ld\|_{2+4/(d-1)}^2\leq o_\ld(1)\|w_\ld\|^2_{H_{x,y}^1}.
\end{align*}
Therefore, \eqref{no dependence 2}, \eqref{no dependence 3}, the facts $\beta_\ld=\beta+o_\ld(1)$ and $\beta>0$ then imply
\begin{align*}
0\gtrsim \|w_\ld\|_{H_{x,y}^1}^2(1-o_\ld(1))\gtrsim \|w_\ld\|_{H_{x,y}^1}^2
\end{align*}
for all $\ld>\ld_0$ with some sufficiently large $\ld_0$. We therefore conclude that $0=w_\ld=\pt_y u_\ld$ for all $\ld>\ld_0$.
\end{proof}

Having all the preliminaries we are in a position to prove Lemma \ref{lemma auxiliary}.

\begin{proof}[Proof of Lemma \ref{lemma auxiliary}]
Define
\begin{align*}
\ld_*:=\inf\{\tilde{\ld}\in(0,\infty):m_{1,\ld}=2\pi \wm_{(2\pi)^{-1}}\,\forall\,\ld\geq \tilde{\ld}\}.
\end{align*}
By Lemma \ref{lemma no dependence} we know that $\ld_*<\infty$. Next we show $\ld_*>0$. It suffices to show
\begin{align}\label{strictly small}
\lim_{\ld\to 0}m_{1,\ld}<2\pi\wm_{(2\pi)^{-1}}.
\end{align}
To see this, we firstly define the function $\rho:[0,2\pi]\to[0,\infty)$ as follows: Let $a\in(0,\pi)$ such that $a>\pi-3\pi\bg(\frac{3}{3+4/(d-1)}\bg)^{\frac{d-1}{2}}$. This is always possible for $a$ sufficiently close to $\pi$. Then we define $\rho$ by
\begin{align*}
\rho(y)=\left\{
\begin{array}{ll}
0,&y\in[0,a]\cup[2\pi-a,2\pi],\\
(\pi-a)^{-1}\bg(\frac{3+4/(d-1)}{3}\bg)^{\frac{d-1}{4}}(y-a),&y\in[a,\pi],\\
(\pi-a)^{-1}\bg(\frac{3+4/(d-1)}{3}\bg)^{\frac{d-1}{4}}(2\pi-a-y),&y\in[\pi,2\pi-a].
\end{array}
\right.
\end{align*}
By direct calculation we see that $\rho\in H_y^1$ and
\begin{align*}
2\pi>\|\rho\|_{L_y^2}^2=\|\rho\|_{L_y^{2+4/(d-1)}}^{2+4/(d-1)}.
\end{align*}
Next, let $P\in H_x^1$ be an optimizer of $\wm_{\|\rho\|_{L_y^2}^{-2}}$. Since by Lemma \ref{lem wmc property} the mapping $c\mapsto\wm_c$ is strictly decreasing on $(0,\infty)$ and $\|\rho\|_{L_y^2}^{-2}>(2\pi)^{-1}$, we infer that $\wm_{\|\rho\|_{L_y^2}^{-2}}<\wm_{(2\pi)^{-1}}$. Furthermore, by Lemma \ref{lem wmc property} (iii) we have $\|\nabla_x P\|_{L_x^2}^2=\frac{d}{d+1}\|P\|_{L_x^{2+4/(d-1)}}^{2+4/(d-1)}$. Now define $\psi(x,y):=\rho(y)P(x)$. Then we conclude that $\psi\in H_{x,y}^1$, $\mM(\psi)=\|\rho\|_{L_y^2}^2\wmM(P)=1$,
\begin{align*}
\mK(\psi)&=\|\nabla_x\psi\|_2^2-\frac{d}{d+1}\|\psi\|_{2+4/(d-1)}^{2+4/(d-1)}\nonumber\\
&=\|\rho\|_{L_y^2}^2\|\nabla_x P\|_{L_x^2}^2
-\frac{ d}{d+1}\|\rho\|_{L_y^{2+4/(d-1)}}^{2+4/(d-1)}\|P\|_{L_x^{2+4/(d-1)}}^{2+4/(d-1)}\nonumber\\
&=\|\rho\|_{L_y^2}^2\bg(\|\nabla_x P\|_{L_x^2}^2-\frac{ d}{d+1}\|P\|_{L_x^{2+4/(d-1)}}^{2+4/(d-1)}\bg)=0
\end{align*}
and
\begin{align*}
\mH_{*}(\psi)
&:=\frac{1}{2}\|\nabla_x \psi\|_{2}^2-\frac{1}{2+4/(d-1)}\|\psi\|_{{2+4/(d-1)}}^{2+4/(d-1)}\nonumber\\
&=\frac{1}{2}\|\rho\|_{L_y^2}^2\|\nabla_x P\|_{L_x^2}^2
-\frac{1}{2+4/(d-1)}\|\rho\|_{L_y^{2+4/(d-1)}}^{2+4/(d-1)}\|P\|_{L_x^{2+4/(d-1)}}^{2+4/(d-1)}\nonumber\\
&=\|\rho\|_{L_y^2}^2\bg(\frac{1}{2}\|\nabla_x P\|_{L_x^2}^2-\frac{1}{2+4/(d-1)}\|P\|_{L_x^{2+4/(d-1)}}^{2+4/(d-1)}\bg)
=\|\rho\|_{L_y^2}^2\wm_{\|\rho\|_{L_y^2}^{-2}}<2\pi\wm_{(2\pi)^{-1}}.\label{star energy}
\end{align*}
Consequently,
\begin{align}
\lim_{\ld\to 0}m_{1,\ld}\leq \lim_{\ld\to 0}\mH_{\ld}(\psi)=\mH_*(\psi)<2\pi\wm_{(2\pi)^{-1}}
\end{align}
and \eqref{strictly small} follows.

Next, since the mapping $\ld\mapsto m_{1,\ld}$ is monotone increasing and $\ld_*$ is defined as an infimum, we know that $m_{1,\ld}<2\pi\wm_{(2\pi)^{-1}}$ for all $\ld\in(0,\ld_*)$. It is left to show the necessity of $y$-independence of the minimizers $u_\ld$ for $\ld>\ld_*$. We borrow an idea from \cite{GrossPitaevskiR1T1} to prove this claim. Assume the contrary that an optimizer $u_\ld$ of $m_{1,\ld}$ satisfies $\|\pt_y u_\ld\|_2^2\neq 0$. Let $\mu\in(\ld_*,\ld)$. Then
\begin{align*}
2\pi \wm_{(2\pi)^{-1}}=m_{1,\mu}\leq \mH_{\mu}(u_\ld)=\mH_{\ld}(u_\ld)+\frac{\mu-\ld}{2}\|\pt_y u_\ld\|_2^2<\mH_{\ld}(u_\ld)=m_{1,\ld}=2\pi \wm_{(2\pi)^{-1}},
\end{align*}
a contradiction. This completes the desired proof.
\end{proof}

\subsection{Proof of Theorem \ref{thm threshold mass}}
We now prove Theorem \ref{thm threshold mass} by using Lemma \ref{lemma auxiliary} and a simple rescaling argument.

\begin{proof}[Proof of Theorem \ref{thm threshold mass}]
From the proof of Lemma \ref{additional exitence lemma} we know that $m_c=c^{-1}m_{1,c^2}$. Using similar rescaling arguments we also know that $\wm_{(2\pi)^{-1}c}=c^{-1}\wm_{(2\pi)^{-1}}$. Set $c_*:=\sqrt{\ld_*}$, where $\ld_*$ is the number given by Lemma \ref{lemma auxiliary}. Then by Lemma \ref{lemma auxiliary} we know that
\begin{itemize}
\item For all $c\in(0,c_*)$
$$m_{c}=c^{-1}m_{1,c^2}<c^{-1}2\pi \wm_{(2\pi)^{-1}}=2\pi \wm_{(2\pi)^{-1}c}.$$

\item For all $c\in(c_*,\infty)$
$$m_{c}=c^{-1}m_{1,c^2}=c^{-1}2\pi \wm_{(2\pi)^{-1}}=2\pi \wm_{(2\pi)^{-1}c}.$$
\end{itemize}
The statement concerning the $y$-independence of the minimizers follows also from Lemma \ref{lemma auxiliary} simultaneously. That $m_{c_*}=2\pi \wm_{(2\pi)^{-1}c_*}$ follows from the continuity of the mappings $c\mapsto m_c$ and $c\mapsto \wm_c$ deduced from Lemma \ref{monotone lemma} and Lemma \ref{lem wmc property} respectively. This completes the desired proof.
\end{proof}

\section{Scattering and finite time blow-up below threshold}\label{sec: Scattering}
In this section we give the proofs of Theorem \ref{main thm} and \ref{thm blow up}. To begin with, we firstly give a quick recap of the large data scattering result for the focusing energy-critical NLS on $\R^4$. In the same subsection we also collect some useful energy-trapping results and a sharp Sobelev's inequality on $\R^3\times\T$ due to Yu-Yue-Zhao \cite{YuYueZhao2021}. Throughout this section we also denote by $\SSS=\SSS_{d+1}$ the best constant of the Sobolev's inequality on $\R^{d+1}$ defined by
$$ \SSS:=\inf_{u\in \mathcal{D}^{1,2}(\R^{d+1})}\bg(\|\nabla_{\R^{d+1}}u\|^2_{L^2(\R^{d+1})}\bg/\|u\|^2_{L^{2+4/(d-1)}(\R^{d+1})}\bg).$$

\subsection{Recap of the focusing energy-critical NLS on $\R^{d+1}$}
Consider the focusing energy-critical NLS
\begin{align}\label{rd nls}
i\pt_t u+\Delta_{\R^{4}}u=-|u|^{2}u\quad\text{on $\R^{4}$}.
\end{align}
We have the following large data scattering result for \eqref{rd nls} due to Dodson \cite{Dodson4dfocusing}:
\begin{theorem}[Threshold scattering of the focusing energy-critical NLS on $\R^{4}$, \cite{Dodson4dfocusing}]\label{theorem focusing dodson}
Assume that a function $\phi\in \dot{H}^1(\R^{4})$ satisfies
\begin{align}\label{threshold assumption rd}
\mH^*(\phi)<\frac{\SSS^{2}}{4}\quad\text{and}\quad\|\nabla_{\R^4}\phi\|_{L^2(\R^{4})}^2<\SSS^{2},
\end{align}
where the energy functional $\mH^*$ is defined by \eqref{def of h star}. Then a solution $u$ of \eqref{rd nls} with $u(0)=\phi$ is global and scattering in time.
\end{theorem}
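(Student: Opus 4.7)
The plan is to carry out the classical Kenig--Merle concentration/compactness-plus-rigidity scheme, which requires four main ingredients: a variational analysis around the Aubin--Talenti ground state $W$, a satisfactory local well-posedness and perturbation theory in $\dot H^1(\R^4)$, a linear profile decomposition, and a rigidity statement ruling out almost periodic critical elements. Since $W$ is the unique (up to symmetries) optimizer of the Sobolev inequality on $\R^4$, it satisfies $\|\nabla_{\R^4} W\|^2_{L^2}=\|W\|_{L^4}^4=\SSS^2$ and $\mH^*(W)=\SSS^2/4$, so the hypotheses \eqref{threshold assumption rd} place $\phi$ strictly below the ground-state threshold.

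The first step is the variational/energy trapping analysis. From $\mH^*(\phi)<\SSS^2/4$ and the Sobolev inequality $\|\phi\|_{L^4}^2\le\SSS^{-1}\|\nabla_{\R^4}\phi\|_{L^2}^2$, one shows by studying the real-valued map $f(s)=\tfrac12 s-\tfrac14\SSS^{-2}s^2$ that the open set $\{u:\|\nabla_{\R^4}u\|_{L^2}^2<\SSS^2\}$ is disconnected from its complement by the ground state energy level; since $\mH^*$ is conserved and $\|\nabla_{\R^4}u(t)\|_{L^2}^2$ depends continuously on $t$, this yields $\|\nabla_{\R^4}u(t)\|_{L^2}^2<\SSS^2$ on the whole maximal interval together with a strict Pohozaev bound $\mK^*(u(t))\ge\delta\|\nabla_{\R^4}u(t)\|_{L^2}^2$ for some $\delta=\delta(\mH^*(\phi))>0$. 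This also gives, via the Pohozaev relation, equivalence of $\mH^*$ and the kinetic energy on the trapped set.

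The second step is the standard $\dot H^1$ local theory for the cubic NLS on $\R^4$: Strichartz estimates, Picard iteration in $L_t^2 L_x^4$-type spaces, small-data global well-posedness and scattering, together with a perturbation lemma in the spirit of Kenig--Merle. Once these are in place, assume for contradiction that the supremum $E_c$ of energies for which the theorem holds is strictly below $\SSS^2/4$. Then a Keraani-type linear profile decomposition applied to a sequence of almost extremal data, combined with the perturbation lemma and the nonlinear profile construction, produces a critical element $u_c$ whose orbit is precompact in $\dot H^1(\R^4)$ modulo the symmetries (space translation and scaling). Energy trapping from Step~1 persists for $u_c$, so $\mK^*(u_c(t))\ge\delta>0$ uniformly in $t$.

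The genuinely hard step, and the main obstacle, is the rigidity: one must show that no nonzero almost periodic (modulo symmetries) solution with $\mK^*\ge\delta$ can exist on its maximal interval. For radial data the truncated virial identity $\frac{d^2}{dt^2}\int\chi_R|x|^2|u|^2$ together with the uniform Pohozaev bound closes the argument after suitable truncation and radial Sobolev decay. For general non-radial data in $\R^4$, following Dodson \cite{Dodson4dfocusing}, I would instead run a long-time interaction Morawetz estimate adapted to the energy-critical setting: combine a frequency-localized interaction Morawetz with the almost periodicity (which prevents mass from escaping to spatial infinity and forces the frequency scale $N(t)$ to be controlled), and then use a double Duhamel argument to upgrade the resulting space-time bound into a contradiction with the lower bound $\mK^*(u_c(t))\ge\delta$. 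Closing this interaction Morawetz/double Duhamel step in the absence of radial symmetry is where the real work lies.
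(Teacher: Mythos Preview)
The paper does not prove this theorem at all: it is stated as a quotation of Dodson's result \cite{Dodson4dfocusing} and used throughout Section~\ref{sec: Scattering} purely as a black box (see the discussion preceding the theorem and its invocation in Lemma~\ref{small scale proxy lem lem}). So there is no ``paper's own proof'' to compare against; the paper's contribution is to embed this $\R^4$ scattering result into the waveguide analysis, not to reprove it.

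That said, your outline is a faithful sketch of the Kenig--Merle/Dodson strategy: variational trapping below the ground state, $\dot H^1$ local theory and stability, profile decomposition to extract a minimal non-scattering solution with precompact orbit, and then rigidity. You correctly identify the rigidity step as the crux and correctly point to Dodson's frequency-localized interaction Morawetz and double Duhamel arguments as the mechanism that removes the radial assumption present in the original Kenig--Merle work. If you were actually asked to reproduce the proof rather than cite it, this would be the right roadmap; just be aware that the interaction Morawetz step in four dimensions is quite delicate (one must handle the soliton-like and low-to-high frequency cascade scenarios separately), and your sketch stops short of indicating how those cases are distinguished and closed.
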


We will also make use of the following energy-trapping result due to Kenig-Merle \cite{KenigMerle2006}.

\begin{lemma}[Energy-trapping, \cite{KenigMerle2006}]\label{kenig merle coercive}
Let $\phi\in \dot{H}^1(\R^{d+1})$. Assume that
$$\|\nabla_{\R^{d+1}}\phi\|_{L^2(\R^{d+1})}^2<\SSS^{\frac{d+1}{2}}$$
and there exists some $\delta_0\in(0,1)$ such that
$$\mH^*(\phi)\leq (1-\delta_0)\SSS^{\frac{d+1}{2}}/(d+1).$$
Then there exists some $\delta=\delta(\delta_0)\in(0,1)$ such that $\|\nabla_{\R^{d+1}}\phi\|_{L^2(\R^{d+1})}^2\leq (1-\delta)\SSS^{\frac{d+1}{2}}$.
\end{lemma}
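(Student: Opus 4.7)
The strategy is the textbook Kenig--Merle energy trapping argument: substitute the sharp Sobolev inequality into the energy to reduce everything to a one-variable concavity analysis. Let $p=2+\frac{4}{d-1}=\frac{2(d+1)}{d-1}$ and write $s:=\|\nabla_{\R^{d+1}}\phi\|_{L^2(\R^{d+1})}^2$. By the definition of $\SSS$ we have $\|\phi\|_{L^p(\R^{d+1})}^p \leq \SSS^{-p/2} s^{p/2}$, so
\begin{equation*}
\mH^*(\phi) \;\geq\; g(s) \;:=\; \tfrac{1}{2}s - \tfrac{d-1}{2(d+1)}\SSS^{-p/2}s^{p/2}.
\end{equation*}

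\textbf{Step 1: monotonicity profile of $g$.} A direct computation using $\tfrac{d-1}{2(d+1)}\cdot\tfrac{p}{2}=\tfrac12$ gives $g'(s)=\tfrac{1}{2}\bigl(1-\SSS^{-p/2}s^{p/2-1}\bigr)$. Hence $g$ is strictly increasing on $[0,s_*]$ and strictly decreasing on $[s_*,\infty)$, where $s_* = \SSS^{p/(p-2)} = \SSS^{(d+1)/2}$. The maximal value is
\begin{equation*}
g(s_*) \;=\; \Bigl(\tfrac12 - \tfrac{d-1}{2(d+1)}\Bigr)\SSS^{(d+1)/2} \;=\; \tfrac{1}{d+1}\SSS^{(d+1)/2}.
\end{equation*}

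\textbf{Step 2: confinement via the hypotheses.} By assumption $s<s_*$, so $s$ lies on the strictly increasing branch of $g$. The energy hypothesis yields
\begin{equation*}
g(s) \;\leq\; \mH^*(\phi) \;\leq\; (1-\delta_0)\,\tfrac{1}{d+1}\SSS^{(d+1)/2} \;=\; (1-\delta_0)g(s_*).
\end{equation*}
Since $g$ is continuous and strictly increasing from $0$ to $g(s_*)$ on $[0,s_*]$, its inverse $g^{-1}$ exists on $[0,g(s_*)]$, and
\begin{equation*}
s \;\leq\; g^{-1}\bigl((1-\delta_0)g(s_*)\bigr) \;<\; s_* \;=\; \SSS^{(d+1)/2}.
\end{equation*}

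\textbf{Step 3: quantifying $\delta$.} Define $\delta=\delta(\delta_0)\in(0,1)$ by $1-\delta := g^{-1}((1-\delta_0)g(s_*))/s_*$; equivalently $\delta$ is the unique number in $(0,1)$ satisfying $g((1-\delta)s_*)=(1-\delta_0)g(s_*)$, which depends only on $\delta_0$ (and $d$). This gives $s\leq(1-\delta)\SSS^{(d+1)/2}$, as claimed. The construction is uniform because $g$ is real-analytic on $[0,s_*]$ with $g'(s_*)=0$ but $g'>0$ on $[0,s_*)$, so $g^{-1}$ is continuous and the map $\delta_0\mapsto\delta(\delta_0)$ is well defined and nondecreasing, with $\delta\to 0$ as $\delta_0\to 0$. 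No step is a serious obstacle; this is essentially a one-variable convexity exercise once the Sobolev inequality is applied, and the only care needed is to distinguish the subcritical branch $\{s<s_*\}$ from the supercritical branch, which is precisely what the hypothesis $\|\nabla\phi\|_2^2<\SSS^{(d+1)/2}$ ensures.
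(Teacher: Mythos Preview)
Your argument is correct and is precisely the standard Kenig--Merle energy-trapping proof: bound the energy below via the sharp Sobolev inequality, reduce to the concave one-variable function $g(s)$, and invert on the increasing branch. The paper does not supply its own proof of this lemma---it simply cites \cite{KenigMerle2006}---so your write-up fills in exactly what the reference contains, with all the constants checked correctly for the exponent $p=\frac{2(d+1)}{d-1}$.
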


In our context we indeed make use of a variational setting based on the semivirial functional $\mK(u)$ and the $\R^{d+1}$-virial functional $\mK^*(\phi)$ (defined by \eqref{def of k star}) that seem irrelevant to the scattering scheme \eqref{threshold assumption rd} at the first glance. The following lemma reveals the fact that the positivity of $\mK^*(\phi)$ is a sufficient condition for \eqref{threshold assumption rd}.

\begin{lemma}\label{energy trapping 1}
Assume that a function $\phi\in \dot{H}^1(\R^{d+1})$ satisfies $\mH^*(\phi)<\frac{\SSS^{\frac{d+1}{2}}}{d+1}$. If $\mK^*(\phi)>0$, then \eqref{threshold assumption rd} holds for $\phi$.
\end{lemma}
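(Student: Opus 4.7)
The plan is to use a direct algebraic rearrangement that expresses $\mH^*$ as a positive combination of the kinetic energy and the $\R^{d+1}$-virial. From the definitions \eqref{def of h star} and \eqref{def of k star}, one checks the identity
\[
\mH^*(\phi) \;=\; \frac{1}{d+1}\,\|\nabla_{\R^{d+1}}\phi\|_{L^2(\R^{d+1})}^2 \;+\; \frac{d-1}{2(d+1)}\,\mK^*(\phi),
\]
which is just the rearrangement obtained from $\tfrac{1}{d+1}+\tfrac{d-1}{2(d+1)} = \tfrac{1}{2}$ together with the matching of the $L^{2(d+1)/(d-1)}$-coefficients. This identity is the key observation: it converts the seemingly weaker datum (the sign of $\mK^*$) into a sharp Aubin--Talenti-style bound on the size of $\|\nabla_{\R^{d+1}}\phi\|_{L^2}^2$.

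With $\mK^*(\phi)>0$, the second summand on the right-hand side is strictly positive, so the identity gives
\[
\|\nabla_{\R^{d+1}}\phi\|_{L^2(\R^{d+1})}^2 \;<\; (d+1)\,\mH^*(\phi) \;<\; \SSS^{(d+1)/2},
\]
which in the application case $d=3$ is precisely the kinetic bound $\|\nabla_{\R^4}\phi\|_{L^2}^2 < \SSS^2$ in \eqref{threshold assumption rd}. The companion energy bound $\mH^*(\phi) < \SSS^{(d+1)/2}/(d+1)$, i.e.\ $\SSS^2/4$ when $d=3$, is exactly the hypothesis. Thus both conditions in \eqref{threshold assumption rd} hold simultaneously, and the proof is complete.

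There is essentially no obstacle here; the lemma is a one-line identity. Its conceptual content is the observation that the (energy-critical) scattering threshold of Dodson's theorem can be recoordinatized in terms of the semivirial-type quantity $\mK^*$, which matches the variational machinery built around $\mK$ elsewhere in the paper and makes it possible to feed Euclidean profiles into Theorem \ref{theorem focusing dodson} via the condition $\mH(u)<m_{\mM(u)}$.
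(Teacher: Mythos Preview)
The proposal is correct and is essentially the same argument as the paper's: both rest on the elementary algebraic relation $\mH^*(\phi)=\tfrac{1}{d+1}\|\nabla_{\R^{d+1}}\phi\|_{L^2}^2+\tfrac{d-1}{2(d+1)}\mK^*(\phi)$, which together with $\mK^*(\phi)>0$ and the energy hypothesis immediately yields the kinetic bound. The only cosmetic difference is that the paper phrases this as a contradiction (assuming $\|\nabla_{\R^{d+1}}\phi\|_{L^2}^2\geq\SSS^{(d+1)/2}$ and deriving $\tfrac{\SSS^{(d+1)/2}}{d+1}>\tfrac{\SSS^{(d+1)/2}}{d+1}$), whereas you write the identity directly.
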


\begin{proof}
Assume therefore the contrary that $\|\nabla_{\R^{d+1}}\phi\|_{L^2(\R^{d+1})}^2\geq \SSS^{\frac{d+1}{2}}$. Using $\mK^*(\phi)>0$ we obtain
\begin{align*}
\frac{\SSS^{\frac{d+1}{2}}}{d+1}>\mH^*(\phi)=\frac{1}{2}\|\nabla_{\R^{d+1}}\phi\|_{L^2(\R^{d+1})}^2
-\frac{d-1}{2(d+1)}\|\phi\|_{L^{2+4/(d-1)}(\R^{d+1})}^{2+4/(d-1)}
>\frac{\|\nabla_{\R^{d+1}}\phi\|_{L^2(\R^{d+1})}^2}{d+1}\geq \frac{\SSS^{\frac{d+1}{2}}}{d+1},
\end{align*}
a contradiction.
\end{proof}

For the upcoming proofs we also need the following useful sharp Sobolev's inequality on $\R^3\times\T$ given by Yu-Yue-Zhao \cite{YuYueZhao2021}.
\begin{lemma}[Sharp Sobolev's inequality on $\R^3\times\T$, \cite{YuYueZhao2021}]\label{r1t3 sobolev lemma}
Let $d=3$. Then there exists some $c>0$ such that
\begin{align*}
\|u\|_{4}\leq \SSS^{-\frac12}\|\nabla_{x,y}u\|_2+c\|u\|_2.
\end{align*}
\end{lemma}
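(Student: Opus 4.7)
The plan is to prove the inequality by contradiction via a linear profile decomposition on $\R^3\times\T$. Suppose the claim fails for every $c>0$; after rescaling I obtain a sequence $(u_n)\subset H^1(\R^3\times\T)$ with $\|u_n\|_4=1$, $\|u_n\|_2\to 0$ and $\SSS^{-1/2}\|\nabla_{x,y} u_n\|_2<1$, which is therefore $H^1$-bounded. The goal is to derive a contradiction from these three properties.

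Passing to a subsequence, I would apply a linear profile decomposition adapted to the mixed geometry of $\R^3\times\T$, analogous to those constructed by Keraani, Ionescu-Pausader and Hani-Pausader: for each $J\in\N$,
\begin{equation*}
u_n = \sum_{j=1}^J T_n^j\phi^j + R_n^J,
\end{equation*}
with profiles either of Euclidean type ($\phi^j\in\dot H^1(\R^4)$, concentrating at scales $N_n^j\to\infty$) or of $\R^3\times\T$-bounded type (fixed scale, $\phi^j\in H^1$), asymptotically orthogonal in both $\|\nabla_{x,y}\cdot\|_2^2$ and $\|\cdot\|_4^4$, with remainders satisfying $\lim_{J\to\infty}\limsup_n \|R_n^J\|_4=0$. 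Because $\|u_n\|_2\to 0$, asymptotic $L^2$-orthogonality forces all bounded-scale profiles to vanish and only Euclidean bubbles survive.

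For each surviving Euclidean profile the sharp Sobolev inequality on $\R^4$ yields $\|\phi^j\|_{L^4(\R^4)}^2\leq\SSS^{-1}\|\nabla\phi^j\|_{L^2(\R^4)}^2$. Writing $a_j:=\|\nabla\phi^j\|_{L^2(\R^4)}^2$, orthogonality gives $\sum_j a_j\leq \limsup_n\|\nabla u_n\|_2^2\leq\SSS$ and $\sum_j \SSS^{-2}a_j^2\geq 1-o_n(1)$, while the elementary estimate $\sum a_j^2\leq(\sum a_j)^2\leq\SSS^2$ pins down equality throughout. This leaves exactly one nontrivial profile $\phi$ with $\|\nabla\phi\|_{L^2(\R^4)}^2=\SSS$, saturating the $\R^4$ Sobolev inequality, hence a rescaled Aubin-Talenti extremal.

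The final step is an explicit tail computation in this limiting configuration. Taking $\phi(z)=(1+|z|^2/8)^{-1}$ normalized to $\|\phi\|_{L^4(\R^4)}=1$, $\|\nabla\phi\|_{L^2(\R^4)}^2=\SSS$, and transplanting at scale $N_n\to\infty$ onto $\R^3\times\T$, the tails of $|\phi|^4,|\nabla\phi|^2,|\phi|^2$ outside $\R^3\times[0,2\pi N_n)$ can be computed by integrating the radial decay of the Aubin-Talenti profile in $\R^4$, yielding
\begin{equation*}
\|u_n\|_4 = 1-O(N_n^{-4}),\qquad \SSS^{-1/2}\|\nabla_{x,y} u_n\|_2 = 1-\Theta(N_n^{-2}),\qquad \|u_n\|_2 \sim N_n^{-1}\sqrt{\log N_n}.
\end{equation*}
Thus the deficit $\|u_n\|_4-\SSS^{-1/2}\|\nabla u_n\|_2$ is of order $N_n^{-2}$, asymptotically much smaller than $n\|u_n\|_2\sim nN_n^{-1}\sqrt{\log N_n}$ as $n\to\infty$, contradicting the assumed strict violation. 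The main obstacle is establishing the profile decomposition with simultaneous orthogonality of kinetic energy and critical $L^4$-norm on the mixed-type manifold $\R^3\times\T$; this delicate concentration-compactness step is the technical heart of Yu-Yue-Zhao's argument, after which the remaining orthogonality bookkeeping and tail computations are essentially routine.
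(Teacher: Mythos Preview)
The paper does not prove this lemma; it is quoted without proof from \cite{YuYueZhao2021}, so there is no in-paper argument to compare against. Your overall strategy---assume failure, normalize, run a profile decomposition on $\R^3\times\T$, and pin down a single Euclidean Aubin--Talenti bubble via the equality case of the $\R^4$ Sobolev inequality---is the natural one and is in the spirit of the cited reference.

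The gap is in your final step. After the decomposition you know $u_n=T_n\phi+R_n$ with $\phi$ an Aubin--Talenti extremal and $R_n\to 0$ in $\dot H^1$ and $L^4$, but this convergence is purely \emph{qualitative}: you have no rate. Your tail computation is performed on the explicit transplanted bubble $T_n\phi$, not on $u_n$, and you then compare the bubble's deficit $\Theta(N_n^{-2})$ to $n\|u_n\|_2$. This presupposes two things you have not established: (a) that the deficit $\|u_n\|_4-\SSS^{-1/2}\|\nabla_{x,y} u_n\|_2$ and the mass $\|u_n\|_2$ for the \emph{actual} sequence inherit the precise asymptotics of the pure bubble---but the uncontrolled remainder $R_n$ could dominate either quantity; and (b) an a priori relation between the concentration scale $N_n$ (which is produced a posteriori by the profile decomposition) and the index $n$. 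Neither follows from what you have written. Closing this requires a \emph{quantitative} stability statement---roughly, that smallness of the Sobolev deficit on $\R^3\times\T$ forces a definite lower bound on $\|u\|_2$ in terms of that deficit---and that is essentially the nontrivial content of the lemma itself, not a routine bookkeeping step.
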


Combining with Young's inequality we deduce the following corollary of Lemma \ref{r1t3 sobolev lemma}.

\begin{corollary}\label{r1t3 sobolev lemma cor}
Let $d=3$. For any $\vare>0$ there exists some $C_\vare>0$ such that
\begin{align*}
\|u\|_{4}^4\leq (\SSS^{-2}+\vare)\|\nabla_{x,y}u\|_2^4+C_\vare\|u\|_2^4.
\end{align*}
\end{corollary}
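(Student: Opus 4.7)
The plan is to raise the sharp inequality of Lemma \ref{r1t3 sobolev lemma} to the fourth power and absorb the cross terms into the two extreme monomials by Young's inequality, tuning the tolerance to produce the prescribed loss $\vare$ in the leading coefficient.

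Concretely, I would first set $a:=\SSS^{-1/2}\|\nabla_{x,y}u\|_2$ and $b:=c\|u\|_2$, so that Lemma \ref{r1t3 sobolev lemma} reads $\|u\|_4\le a+b$. Raising to the fourth power gives
\begin{align*}
\|u\|_4^4\le (a+b)^4=a^4+4a^3b+6a^2b^2+4ab^3+b^4.
\end{align*}
The goal is to show that for any $\eta>0$ there exists $C(\eta)>0$ with
\begin{align*}
(a+b)^4\le (1+\eta)a^4+C(\eta)\,b^4\qquad\text{for all }a,b\ge 0.
\end{align*}
This is a purely elementary fact: dividing by $b^4$ (the case $b=0$ is trivial), the claim reduces to $(t+1)^4-(1+\eta)t^4\le C(\eta)$ for $t\ge0$, and the left-hand side tends to $-\infty$ as $t\to\infty$ and is continuous on $[0,\infty)$, hence bounded above. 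Alternatively, one can apply Young's inequality individually to $4a^3b$, $6a^2b^2$, and $4ab^3$ with the appropriate conjugate exponents, choosing the weights small enough that the $a^4$-contributions together sum to at most $\eta a^4$.

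Finally, given $\vare>0$, I would choose $\eta:=\vare\SSS^2$, so that $(1+\eta)\SSS^{-2}=\SSS^{-2}+\vare$, and set $C_\vare:=C(\vare\SSS^2)\,c^4$. Substituting back gives
\begin{align*}
\|u\|_4^4\le (1+\eta)a^4+C(\eta)b^4=(\SSS^{-2}+\vare)\|\nabla_{x,y}u\|_2^4+C_\vare\|u\|_2^4,
\end{align*}
which is the claimed inequality. There is no serious obstacle here; the only point to keep track of is that the loss in the leading constant comes solely from the pure $a^4$ block after absorption, so the cross terms are harmless provided $C_\vare$ is allowed to blow up as $\vare\to 0$.
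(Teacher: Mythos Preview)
Your proposal is correct and follows the same approach as the paper, which simply states that the corollary follows from Lemma \ref{r1t3 sobolev lemma} by Young's inequality. Your write-up spells out in full detail exactly what that one-line remark means.
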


\subsection{A refined uniform bound for $m_c$}
We note that in order to apply the scattering and energy-trapping arguments stated in last subsection, it is always assumed that the energy $\mH^*(\phi)$ must be strictly less than $\SSS^{\frac{d+1}{2}}/(d+1)$. Such a typical upper bound of the energy functional, which arises often in problems involving energy-critical potentials, can not be obtained by solely using Lemma \ref{cor lower bound 1} concerning the size of $m_c$. Indeed, we are able to prove that $m_c$ will never exceed the threshold $\SSS^{\frac{d+1}{2}}/(d+1)$.
\begin{lemma}[A refined upper bound for $m_c$]\label{lemma refined uniform bound}
We have $m_c\leq \mathcal{S}^{\frac{d+1}{2}}/(d+1)$ for any $c\in(0,\infty)$.
\end{lemma}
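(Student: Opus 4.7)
The target bound $\SSS^{(d+1)/2}/(d+1)$ is precisely the critical energy of an Aubin--Talenti bubble on $\R^{d+1}$, which suggests a test-function strategy using concentrated Euclidean bubbles. My plan is to exhibit a family $\{\phi_\vare\}_\vare$ of localized bubbles, project them onto $V(c_\vare)$ for some auxiliary masses $c_\vare\to 0$, show that their energy approaches the target, and then invoke the monotonicity of $c\mapsto m_c$ from Lemma~\ref{monotone lemma} to conclude for arbitrary $c$.

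Let $W\in\dot H^1(\R^{d+1})$ be the positive, radially symmetric Aubin--Talenti extremizer of $\SSS$, normalized so that $-\Delta W=|W|^{4/(d-1)}W$; then $\|\nabla W\|_{L^2(\R^{d+1})}^2=\|W\|_{L^p(\R^{d+1})}^p=\SSS^{(d+1)/2}$ and $\mH^*(W)=\SSS^{(d+1)/2}/(d+1)$, where $p=2+4/(d-1)$. A decisive observation is that the full rotational symmetry of $W$ yields $\|\pt_j W\|_{L^2}^2=\|\nabla W\|_{L^2}^2/(d+1)$ for each coordinate direction $j$, so $\|\nabla_x W\|^2=\tfrac{d}{d+1}\|\nabla W\|^2=\tfrac{d}{d+1}\|W\|_p^p$; in other words, viewed formally as a function on $\R^d\times\T$, $W$ already lies on the semivirial-vanishing manifold.

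Fix $\chi\in C_c^\infty(\R^{d+1})$ supported in a small ball contained in $\R^d\times(-\pi,\pi)$ with $\chi\equiv 1$ near $0$, and set
\begin{equation*}
\phi_\vare(x,y):=\chi(x,y)\,\vare^{-(d-1)/2}W\bigl((x,y)/\vare\bigr),\qquad \vare\in(0,1),
\end{equation*}
which belongs to $C_c^\infty(\R^d\times(-\pi,\pi))\subset H^1(\R^d\times\T)$. Standard Brezis--Nirenberg-type cut-off estimates show that, as $\vare\to 0$, $\|\nabla_{x,y}\phi_\vare\|_2^2\to\|\nabla W\|_{L^2}^2$, $\|\phi_\vare\|_p^p\to\|W\|_{L^p}^p$, and $\|\nabla_x\phi_\vare\|_2^2\to\tfrac{d}{d+1}\|\nabla W\|_{L^2}^2$, while $c_\vare:=\mM(\phi_\vare)\to 0$ (with logarithmic corrections in low dimensions $d=2,3$ where $W\notin L^2(\R^{d+1})$). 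Consequently $\mH(\phi_\vare)\to\SSS^{(d+1)/2}/(d+1)$ and $\mK(\phi_\vare)\to 0$.

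To restore $\mK=0$ exactly, Lemma~\ref{monotoneproperty} supplies $t_\vare>0$ with $\phi_\vare^{t_\vare}\in V(c_\vare)$; the defining relation rearranges to $t_\vare^{2/(d-1)}=(d+1)\|\nabla_x\phi_\vare\|_2^2/\bigl(d\|\phi_\vare\|_p^p\bigr)\to 1$, and continuity of $\mH$ under the scaling then gives $\mH(\phi_\vare^{t_\vare})\to\SSS^{(d+1)/2}/(d+1)$. Finally, for arbitrary $c\in(0,\infty)$, once $\vare$ is small enough that $c_\vare\leq c$, the monotonicity in Lemma~\ref{monotone lemma} yields $m_c\leq m_{c_\vare}\leq\mH(\phi_\vare^{t_\vare})$; letting $\vare\to 0$ delivers the claim. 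The main technical obstacle is the Brezis--Nirenberg asymptotic analysis of the cut-off bubble, but since we only need qualitative statements ($c_\vare\to 0$ and convergence of $\mH$ and $\mK$), no sharp rate is required; the mass-mismatch issue that normally complicates such constructions is resolved painlessly here by the monotonicity of $c\mapsto m_c$.
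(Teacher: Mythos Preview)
Your proof is correct and follows essentially the same strategy as the paper: both use cut-off Aubin--Talenti bubbles in $\R^{d+1}$, exploit their rotational symmetry to land on (or near) the semivirial-vanishing manifold, and then invoke the monotonicity of $c\mapsto m_c$ to pass to arbitrary mass. The only minor difference is bookkeeping: the paper first projects onto $\mK^*=0$ via the isotropic $\R^{d+1}$-scaling $S_t$ and then applies the $\dot H^1$-invariant dilation $S^s$ to shrink the mass, whereas you rely directly on $c_\vare\to 0$ as the bubble concentrates and correct the semivirial with the anisotropic scaling $u\mapsto u^t$; your route is slightly more economical but the substance is the same.
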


\begin{proof}
For $0<\vare\ll 1$, define
\begin{align}
v_{\varepsilon}(z) &:=\varphi(z)\cdot\bg(\frac{\varepsilon}{\varepsilon^2+|z|^2}\bg)^{\frac{d-1}{2}},
\label{eq 1}
\end{align}
where $\varphi\in C^\infty_c(\R^{d+1};[0,1])$ is a radially symmetric and decreasing cut-off function such that $\varphi(z)\equiv 1$ in $|z|\leq 1$ and $\varphi(z)\equiv 0$ in $|z|\geq 2$. Next, for $t>0$ we define the operator $S_t:H^1(\R^{d+1})\to H^1(\R^{d+1})$ by
\begin{align*}
S_t u(z):=t^{\frac{d+1}{2}}u(tz).
\end{align*}
Particularly, we have $\|S_t u\|_{L^2(\R^{d+1})}=\|u\|_{L^2(\R^{d+1})}$ for any $t\in(0,\infty)$. Moreover, by \cite[Lem. 5.3]{SoaveCritical} we can find a unique $t_\vare^*=t_\vare^*(v_\vare)\in(0,\infty)$ such that
$$\mK^*(S_{t_\vare^*}v_\vare)=0\quad\text{and}\quad\mH^*(S_{t_\vare^*}v_\vare)=\SSS^{\frac{d+1}{2}}/(d+1)+O(\vare^{d-1}).$$
Next, we define the operator $S^s:H^1(\R^{d+1})\to H^1(\R^{d+1})$ for $s\in(0,\infty)$ by
\begin{align*}
S^s u(z):=s^{\frac{d-1}{2}}u(sz).
\end{align*}
One easily verifies that
\begin{align*}
\mK^*(S^s S_{t_\vare^*}v_\vare)=\mK^*(S_{t_\vare^*}v_\vare)=0\quad\text{and}\quad
\mH^*(S^s S_{t_\vare^*}v_\vare)=\mH^*(S_{t_\vare^*}v_\vare)
\end{align*}
for all $s\in (0,\infty)$, $\,S^s S_{t_\vare^*}v_\vare(z)$ is supported in $|z|\leq 1$ for all sufficiently large $s$ and
\begin{align*}
\lim_{s\to\infty}\|S^s S_{t_\vare^*}v_\vare\|_{L^2(\R^{d+1})}=0.
\end{align*}
Hence for all sufficiently large $s$, writing $z$ as $z=(x,y)$ we may identify $S^s S_{t_\vare^*}v_\vare$ as a function in $H_{x,y}^1$ by extending $S^s S_{t_\vare^*}v_\vare$ periodically modulo $2\pi$ along the $y$-direction. In particular, we have
\begin{align*}
\mM(S^s S_{t_\vare^*}v_\vare)=\|S^s S_{t_\vare^*}v_\vare\|^2_{L^2(\R^{d+1})}\quad\text{and}\quad
\mH(S^s S_{t_\vare^*}v_\vare)=\mH^*(S^s S_{t_\vare^*}v_\vare).
\end{align*}
Since $S^s S_{t_\vare^*}v_\vare$ is radially symmetric on $\R^{d+1}$, we also infer that
\begin{align*}
\mK(S^s S_{t_\vare^*}v_\vare)&=\frac{d}{d+1}\mK^*(S^s S_{t_\vare^*}v_\vare)=0.
\end{align*}
For a given $c\in(0,\infty)$, we choose some $s$ sufficiently large such that $\mM(S^s S_{t_\vare^*}v_\vare)=:\tilde{c}\leq c$. Using the monotonicity of the mapping $c\mapsto m_c$ deduced from Lemma \ref{monotone lemma} we infer that
\begin{align*}
m_c\leq m_{\tilde{c}}\leq \mH(S^s S_{t_\vare^*}v_\vare)=\mH^*(S_{t_\vare^*}v_\vare)=\SSS^{\frac{d+1}{2}}/(d+1)+O(\vare^{d-1}).
\end{align*}
Since $\vare$ can be chosen arbitrarily small, we conclude the desired claim.
\end{proof}

\subsection{Function spaces and Strichartz estimates}\label{sec littlewood paley}
In this subsection we introduce the function spaces and Strichartz estimates for the model problem \eqref{nls} (in the case $d=3$) which were originated in \cite{HerrTataruTz1,HerrTataruTz2,HaniPausader,RmT1}.
We begin with defining the Littlewood-Paley projectors. Let $\Phi\in C_c^\infty(\R;[0,1])$ be radially symmetric and decreasing, $\Phi(t)\equiv 1$ for $|t|\leq 1$ and $\Phi(t)\equiv 0$ for $|t|\geq 2$. For $z\in\R^4$ let $\eta(z):=\Phi(z_1)\Phi(z_2)\Phi(z_3)\Phi(z_4)$. Then for $N>0$ we define
\begin{align*}
\eta_{\leq N}(z):=\eta(z/N),\quad\eta_{N}(z):=\eta_{\leq N}(z)-\eta_{\leq N/2}(z),\quad \eta_{> N}(z):=1-\eta_{\leq N/2}(z).
\end{align*}
For a dyadic number $N\leq 1$ we define the Littlewood-Paley projector $P_{\leq N}$ by $\mathcal{F}(P_{\leq N})=\eta_1$. For $N\geq 2$ we similarly define
\begin{align*}
\mathcal{F}(P_{\leq N})=\eta_{\leq N},\quad \mathcal{F}(P_{N})=\eta_{N},\quad\mathcal{F}(P_{>N})=\eta_{>N}.
\end{align*}

Next, we introduce the spaces $X^s$ and $Y^s$.
Denote by $C=(-1/2,1/2]^4\in \R^4$ the unit cube in $\R^4$. For $z\in\R^4$ the translated cube $C_z$ is defined by $C_z:=C+z$. Moreover, we define the projector $P_{C_z}$ by
$$\mathcal{F}(P_{C_z}u):=\chi_{C_z}\mathcal{F}(u),$$
where $\chi_{C_z}$ is the characteristic function of $C_z$. For $s\in\R$ we then define the spaces $X_0^s(\R)$ and $Y^s(\R)$ through the norms
\begin{align*}
\|u\|_{X^s_0(\R)}^2:=\sum_{z\in\Z^4}\la z\ra^{2s}\|P_{C_z} u\|_{U_{\Delta_{x,y}}^2(\R;L^2_{x,y})},\\
\|u\|_{Y^s(\R)}^2:=\sum_{z\in\Z^4}\la z\ra^{2s}\|P_{C_z} u\|_{V_{\Delta_{x,y}}^2(\R;L^2_{x,y})},
\end{align*}
where $U_{\Delta_{x,y}}^2$ and $V_{\Delta_{x,y}}^2$ are the standard atom spaces taking values in $L_{x,y}^2$ (see for instance \cite{HadacHerrKoch2009} for their precise definitions). For any subinterval $I\subset\R$, the space $X^s(I)$ is defined through the norm
\begin{align*}
\|u\|_{X^s(I)}:=\inf\{\|v\|_{X^s_0(\R)}:v\in X_0^s(\R),\,v|_I=u|_I\}.
\end{align*}
The space $Y^s(I)$ is similarly defined. We also define the space $X^s_c(\R)$ by
\begin{align*}
X^s_c(\R):=\{u\in C(\R;H_{x,y}^s):\phi_{-\infty}:=\lim_{t\to -\infty}e^{-it\Delta_{x,y}}u(t)\text{ exists in $H_{x,y}^s$ and
 $u(t)-e^{it\Delta_{x,y}}\phi_{-\infty}\in X^s_0(\R)$}\}.
\end{align*}
The space $X^s_c(\R)$ is equipped with the norm
\begin{align*}
\|u\|_{X^s_c(\R)}^2:=\|\phi_{-\infty}\|_{H_{x,y}^s}^2+\|u(t)-e^{it\Delta_{x,y}}\phi_{-\infty}\|^2_{X_0^s(\R)}\sim \sup_{K\subset \R\,\text{compact}}
\|u\|_{X(K)}^2.
\end{align*}
For any subinterval $I\subset\R$, we define the space $X^s_c(I)$ via the second definition of the $X_c^s$-norm running over all compact subsets $K$ of $I$. We also define the space $X^s_{c,\mathrm{loc}}(I)$ by
\begin{align}\label{def of Xcloc}
X^s_{c,\mathrm{loc}}(I):=\bigcap_{J\subset I\,\text{compact}}X^s_{c}(J).
\end{align}
For an interval $I=(a,b)$, the space $N^s(I)$ is defined through the norm
\begin{align*}
\|u\|_{N^s(I)}:=\|\int_a^t e^{i(t-\sigma)\Delta_{x,y}}u(\sigma)\,d\sigma\|_{X^s(I)}.
\end{align*}
When $s=1$, we simply write $X^1=X$, $Y^1=Y$ and so on. We record the following useful properties of the previously defined function spaces.

\begin{lemma}[Embeddings between function spaces, \cite{HadacHerrKoch2009,HerrTataruTz1}]
For any $s\in\R$ and $p\in(2,\infty)$ we have
\begin{align*}
U^2_{\Delta_{x,y}}(I;H^s_{x,y})\hookrightarrow X^s(I)\hookrightarrow Y^s(I)\hookrightarrow V_{\Delta_{x,y}}^2(I;H_{x,y}^s)
\hookrightarrow U^p_{\Delta_{x,y}}(I;H_{x,y}^s)\hookrightarrow L_t^\infty(I;H_{x,y}^s).
\end{align*}
\end{lemma}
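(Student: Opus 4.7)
The embeddings are standard consequences of the atomic $U^p/V^p$ theory of Koch--Tataru (see \cite{HadacHerrKoch2009}) combined with the frequency-cube structure of $X^s$ and $Y^s$ from \cite{HerrTataruTz1}, so the plan is to unpack the definitions and verify each embedding in turn, relying on classical facts without reproving them.

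The first embedding $U^2_{\Delta_{x,y}}(I;H^s_{x,y})\hookrightarrow X^s(I)$ is the one that really uses the cube structure. The key observation is that the projectors $P_{C_z}$ are frequency multipliers, hence commute with $e^{it\Delta_{x,y}}$ and act boundedly on $U^2_{\Delta_{x,y}}$ by acting separately on each atomic piece. Since the cubes $C_z$ partition $\R^4$, Plancherel yields $\sum_{z\in\Z^4}\langle z\rangle^{2s}\|P_{C_z} f\|_{L^2_{x,y}}^2\sim \|f\|_{H^s_{x,y}}^2$ with constants independent of $f$ (because $\langle\xi\rangle^s$ varies by at most a constant factor on each unit cube). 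Applying this identity to each step of a $U^2(H^s)$-atom and summing over atoms using the $\ell^2$-triangle inequality and the atomic definition of $\|\cdot\|_{U^2}$ gives the claim.

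The middle embeddings follow from general principles. The inclusion $X^s(I)\hookrightarrow Y^s(I)$ follows from the classical $U^2_{\Delta_{x,y}}\hookrightarrow V^2_{\Delta_{x,y}}$ embedding applied in each cube summand. For $Y^s(I)\hookrightarrow V^2_{\Delta_{x,y}}(I;H^s_{x,y})$, given any increasing partition $t_0<\cdots<t_n$ of $I$ one has, by Plancherel applied to each increment,
\begin{equation*}
\sum_{j=1}^n\|u(t_j)-u(t_{j-1})\|_{H^s_{x,y}}^2
\sim \sum_{z\in\Z^4}\langle z\rangle^{2s}\sum_{j=1}^n\|P_{C_z}u(t_j)-P_{C_z}u(t_{j-1})\|_{L^2_{x,y}}^2
\lesssim \sum_{z\in\Z^4}\langle z\rangle^{2s}\|P_{C_z}u\|_{V^2_{\Delta_{x,y}}}^2
=\|u\|_{Y^s(I)}^2,
\end{equation*}
after subtracting the free Schrödinger evolution (which is absorbed into the definition of $V^2_{\Delta_{x,y}}$) and taking the supremum over partitions. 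The embedding $V^2_{\Delta_{x,y}}(I;H^s_{x,y})\hookrightarrow U^p_{\Delta_{x,y}}(I;H^s_{x,y})$ for $p>2$ is the standard $V^2\hookrightarrow U^p$ interpolation fact established in \cite{HadacHerrKoch2009}, applied with values in the Hilbert space $H^s_{x,y}$. Finally, $U^p_{\Delta_{x,y}}(I;H^s_{x,y})\hookrightarrow L^\infty(I;H^s_{x,y})$ is immediate from the atomic definition, since each $U^p$-atom is a normalized step function bounded in $L^\infty_t H^s_{x,y}$, and the $U^p$-norm is an $\ell^p$-infimum of such atoms.

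The only genuinely nontrivial input is the almost-orthogonality needed in the first and third steps, and even there the argument reduces to the Plancherel identity on frequency cubes together with the atomic structure. All other implications are either formal consequences of definitions or general lemmas about $U^p/V^p$ spaces. The main obstacle is therefore organizational rather than analytical: one has to be careful that the cube-indexed sums interchange properly with the atomic decompositions, but this is handled by Fubini and the Hilbert-space-valued nature of the atoms.
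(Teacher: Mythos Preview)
Your sketch is correct and follows the standard route from \cite{HadacHerrKoch2009,HerrTataruTz1}; the paper does not give its own proof of this lemma but simply cites these references, so there is nothing to compare against. One small wording issue: in the first embedding you write ``summing over atoms using the $\ell^2$-triangle inequality,'' but the $U^2$-norm is an $\ell^1$ infimum over atomic decompositions, so the clean way to phrase it is that the bound $\|a\|_{X^s}\leq 1$ holds for each atom and then extends to general $u=\sum_j\lambda_j a_j$ by the ordinary triangle inequality for the $X^s$-norm (which is a genuine norm as a weighted $\ell^2$ of $U^2$-norms), giving $\|u\|_{X^s}\leq\sum_j|\lambda_j|$.
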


\begin{lemma}[Duality of $N$ and $Y^{-1}$, \cite{HerrTataruTz1}]
For $u\in L_t^1H_{x,y}^1(I)$ we have
\begin{align*}
\|u\|_{N(I)}\lesssim \sup_{\|v\|_{Y^{-1}(I)}\leq 1}\int_{I\times(\R^3\times\T)}u(t,x,y)\bar{v}(t,x,y)\,dxdydt.
\end{align*}
Moreover, for any smooth function $g$ defined on $I=[a,b]$ we have
\begin{align*}
\|g\|_{X(I)}\lesssim \|g(a)\|_{H_{x,y}^1}+\bg(\sum_{N\geq 1}\|P_N(i\pt_t+\Delta_{x,y})g\|_{L_t^1H_{x,y}^1(I)}^2\bg)^{\frac12}.
\end{align*}
\end{lemma}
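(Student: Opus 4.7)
Both assertions are standard consequences of the atomic-space machinery of Koch--Tataru and Herr--Tataru--Tzvetkov, and the strategy is to reduce everything to the duality $(U_{\Delta_{x,y}}^2)^* = V_{\Delta_{x,y}}^2$ (in the sense of the bilinear pairing) combined with the Duhamel representation used to define $N(I)$.

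For the first inequality I would start from the definition
\[
\|u\|_{N(I)} = \Bigl\|\int_a^t e^{i(t-\sigma)\Delta_{x,y}} u(\sigma)\,d\sigma\Bigr\|_{X(I)},
\]
and dualize the $X(I)$-norm. Recall that on each unit frequency cube $C_z$ the $X_0$-norm is by definition the $U_{\Delta_{x,y}}^2$-norm of the projection $P_{C_z}$, and the $Y$-norm is the analogous $V_{\Delta_{x,y}}^2$-norm. Since $U_\Delta^2$ is dual to $V_\Delta^2$ through the natural $L_{x,y}^2$ pairing applied in time, and since the Duhamel operator intertwines the pairing with multiplication by $e^{-i\sigma\Delta_{x,y}}$ (so that a test function in $Y^{-1}(I)$ becomes a legitimate dual element against $u$), squaring and summing the cube-localized estimates with the $\la z\ra^{\pm 2}$ weights yields exactly the stated supremum bound. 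The only subtle point is to check that the infimum defining $X(I)$ and $Y^{-1}(I)$ on the subinterval $I$ is preserved under this duality, which is a routine Hahn--Banach extension argument.

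For the second inequality I would use the Duhamel identity on $I=[a,b]$: writing $F := (i\pt_t+\Delta_{x,y})g$, smoothness of $g$ gives
\[
g(t) = e^{i(t-a)\Delta_{x,y}} g(a) \;-\; i\!\int_a^t e^{i(t-\sigma)\Delta_{x,y}} F(\sigma)\,d\sigma.
\]
The free part is controlled by $\|g(a)\|_{H_{x,y}^1}$ because $t\mapsto e^{it\Delta_{x,y}}\phi$ is a $U_{\Delta_{x,y}}^2$-atom and $U_{\Delta_{x,y}}^2(I;H_{x,y}^1)\hookrightarrow X(I)$. The Duhamel part is, by definition of $N(I)$, exactly $\|F\|_{N(I)}$; applying the already-proved first inequality and then decomposing frequency-by-frequency, the bound $\|P_N F\|_{N(I)}\lesssim \|P_N F\|_{L_t^1 H_{x,y}^1(I)}$ is immediate from the trivial estimate against $\|v\|_{L_t^\infty H_{x,y}^{-1}}\le \|v\|_{Y^{-1}(I)}$ (since $Y^{-1}\hookrightarrow L_t^\infty H_{x,y}^{-1}$). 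Finally, the $\ell^2$-square function structure of $X$ in the dyadic frequency variable lets us sum the $P_N$-pieces in $\ell_N^2$ rather than $\ell_N^1$, giving the square-function form on the right-hand side.

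The routine but slightly technical step is the Duhamel duality used for the first inequality: one must check that the cube-localizations $P_{C_z}$ commute sufficiently well with both the Schr\"odinger propagator and the time-dependent dual pairing so that the sum over $z\in\Z^4$ assembles into a genuine $Y^{-1}(I)$-norm. This is exactly the content of the arguments in \cite{HadacHerrKoch2009,HerrTataruTz1}, from which both bounds can be quoted directly once the above reductions have been made.
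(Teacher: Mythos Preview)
The paper does not give its own proof of this lemma: it is stated with the citation \cite{HerrTataruTz1} and no argument follows, so there is nothing to compare your proposal against. Your sketch is a reasonable outline of the standard atomic-space argument from \cite{HadacHerrKoch2009,HerrTataruTz1}, and in particular the Duhamel splitting and the use of $Y^{-1}\hookrightarrow L_t^\infty H_{x,y}^{-1}$ to pass from the duality form to the $L_t^1 H_{x,y}^1$ bound are exactly the intended steps; just be aware that the $X$-norm is an $\ell^2$-sum over \emph{unit cubes} $C_z$ rather than dyadic shells, so the passage to the dyadic square function $(\sum_{N\ge 1}\|P_N\cdot\|^2)^{1/2}$ requires an extra almost-orthogonality step grouping the cubes $C_z$ with $|z|\sim N$.
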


We shall also need the following Strichartz estimate on $\R^3\times\T$:

\begin{lemma}[Strichartz estimate on $\R^3\times\T$, \cite{RmT1,Barron}]\label{thm barron}
Let $p\in(3,6)$ and $\frac{2}{q}+\frac{3}{p}=\frac32$. Then for any $\phi\in H_{x,y}^1$ we have
\begin{align*}
\bg(\sum_{\gamma\in\Z}\|e^{it\Delta_{x,y}}\phi\|^q_{L_{t,x,y}^p(\gamma-1,\gamma+1)}\bg)^{\frac1q}\lesssim \|\phi\|_{H_{x,y}^{2-6/p}}.
\end{align*}
\end{lemma}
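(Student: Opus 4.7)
The plan is to exploit the product structure of $\R^3\times\T$ by Fourier-decomposing in the compact variable. Writing $\phi(x,y)=\sum_{k\in\Z}\phi_k(x)e^{iky}$, the propagator diagonalizes as $e^{it\Delta_{x,y}}\phi(x,y)=\sum_k e^{iky-ik^2t}e^{it\Delta_x}\phi_k(x)$, so that each Fourier mode $\phi_k$ evolves under the free Schr\"odinger flow on $\R^3$ up to a harmless time phase. Classical $\R^3$-Strichartz then provides the core dispersive input, while the compact direction contributes only a Sobolev/Bernstein-type loss.

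For the $L^p_y$-norm I would invoke the Sobolev embedding $H^{1/2-1/p}(\T)\hookrightarrow L^p(\T)$ together with Plancherel on $\T$ to obtain
\begin{align*}
\|e^{it\Delta_{x,y}}\phi(t,x,\cdot)\|_{L^p_y(\T)}\lesssim\bg(\sum_{k\in\Z}\la k\ra^{1-2/p}|e^{it\Delta_x}\phi_k(x)|^2\bg)^{1/2}.
\end{align*}
Since $p,q\geq 2$ throughout the stated range, Minkowski permits interchanging the $\ell^2_k$-sum with the outer $L^p_{t,x}$-norm. For the $\R^3$-side, when $p\leq 10/3$ (equivalently $p\leq q$) I would combine the admissible Strichartz bound $\|e^{it\Delta_x}\phi_k\|_{L^q_tL^p_x(\R)}\lesssim\|\phi_k\|_{L^2_x}$ with H\"older in time on the unit interval; when $p>10/3$ I would instead use the Sobolev--Strichartz variant $\|e^{it\Delta_x}f\|_{L^p_{t,x}(\R)}\lesssim\|f\|_{\dot H^{3/2-5/p}(\R^3)}$, which follows from an admissible Strichartz pair in $x$ together with Sobolev embedding on $\R^3$.

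Collecting derivative losses, the $y$-contribution $1/2-1/p$ coming from Sobolev on $\T$ adds to the $x$-contribution $\max(0,3/2-5/p)$, summing precisely to $2-6/p$ at the endpoint $p=10/3$ and throughout $p\in[10/3,6)$. The outer $\ell^q_\gamma$-sum is then absorbed by the bounded-overlap property of the intervals $(\gamma-1,\gamma+1)$ combined with the global-in-time nature of $\R^3$-Strichartz applied modewise, so that $\ell^q_\gamma L^p_{t,x,y}(I_\gamma)$ reduces to the global $L^q_tL^p_{x,y}(\R)$-norm up to constants.

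The main technical obstacle is the uniform bookkeeping across $p\in(3,6)$: in the subrange $p\in(3,10/3)$ the naive Sobolev-on-$\T$ accounting yields the Barron-type estimate in $H^{1/2-1/p}$, which has strictly more regularity than the claimed $H^{2-6/p}$, so a more refined argument based on the $U^2$--$V^2$-type function spaces and Littlewood--Paley decompositions in both $x$ and $y$ developed in the cited references \cite{RmT1,Barron} is needed to reconcile the exponents and to pass back to the local-in-time $\ell^q_\gamma L^p_{t,x,y}(I_\gamma)$ norm in that subrange.
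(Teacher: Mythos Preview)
The paper does not give a proof of this lemma; it is quoted directly from \cite{RmT1,Barron}. Your sketch, however, contains a genuine gap in the range $p\in(10/3,6)$ --- which is precisely the range $p\in(5,11/2)$ used to define the scattering $Z$-norm later in the paper.

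The problem is the $\ell^q_\gamma$-summation. For $p>10/3$ the relation $\frac{2}{q}+\frac{3}{p}=\frac{3}{2}$ forces $q<p$, and then the norm $\ell^q_\gamma L^p_t(I_\gamma)$ is \emph{strictly stronger} than both $L^q_t(\R)$ and $L^p_t(\R)$. Neither the bounded-overlap reduction to $L^q_t L^p_{x,y}(\R)$ that you invoke in your final paragraph, nor the diagonal Sobolev--Strichartz bound $\|e^{it\Delta_x}f\|_{L^p_{t,x}(\R)}\lesssim\|f\|_{\dot H^{3/2-5/p}(\R^3)}$, can control it: for the first, a function can lie in $L^q_t(\R)$ yet have infinite $L^p_t$-norm on a single $I_\gamma$; for the second, taking $f=\sum_{n=1}^N\chi_{[n,n+1]}$ yields $\ell^q_\gamma L^p_t$-norm $\sim N^{1/q}$ versus $L^p_t(\R)$-norm $\sim N^{1/p}$. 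Your modewise decomposition sees only the $\R^3$-dispersion, which cannot simultaneously deliver the local $\R^4$-scaling exponent $2-6/p$ on each unit interval and the long-time $\ell^q$-summability. The actual proofs in \cite{Barron,RmT1} obtain the local-in-time bound on each $I_\gamma$ from the Bourgain--Demeter $\ell^2$-decoupling theorem applied to the full paraboloid over $\R^3\times\T$ (this is the source of the exponent $2-6/p$, which is exactly the $\R^4$-diagonal Sobolev--Strichartz index), and the $\ell^q_\gamma$-summability is established separately via a kernel/$TT^*$ argument exploiting large-time $\R^3$-dispersion. Neither step reduces to classical $\R^3$-Strichartz applied Fourier-modewise in $y$.
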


Using the embedding $\ell^{q}\hookrightarrow \ell^{\tilde{q}}$ for $\tilde{q}>q$ we obtain immediately the following corollary of Lemma \ref{thm barron}.

\begin{corollary}\label{cor barron}
Let $p\in(3,6)$ and $\frac{2}{q}+\frac{3}{p}=1$. Then for any $\phi\in H_{x,y}^1$ we have
\begin{align*}
\bg(\sum_{\gamma\in\Z}\|e^{it\Delta_{x,y}}\phi\|^q_{L_{t,x,y}^p(\gamma-1,\gamma+1)}\bg)^{\frac1q}\lesssim \|\phi\|_{H_{x,y}^{2-6/p}}.
\end{align*}
\end{corollary}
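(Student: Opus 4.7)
The plan is to directly deduce the bound from Lemma \ref{thm barron} by comparing the two admissibility scalings and invoking the trivial nesting of discrete Lebesgue norms. Let me write $q_1$ for the exponent satisfying $\frac{2}{q_1}+\frac{3}{p}=\frac{3}{2}$ (the Strichartz scaling from Lemma \ref{thm barron}) and $q_2$ for the exponent in the corollary, satisfying $\frac{2}{q_2}+\frac{3}{p}=1$. Solving both yields
\begin{align*}
q_1=\frac{4p}{3(p-2)},\qquad q_2=\frac{2p}{p-3}.
\end{align*}
The first step would be to verify that $q_1<q_2$ for every $p\in(3,6)$. This is immediate from the identity $\frac{2}{q_1}-\frac{2}{q_2}=\frac{1}{2}>0$, which gives $q_1<q_2$ regardless of $p$. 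In particular, the Sobolev index $2-\tfrac{6}{p}$ appearing on the right-hand side is identical in both statements, so no interpolation in regularity is needed.

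Next I would use the elementary embedding $\ell^{q_1}(\Z)\hookrightarrow \ell^{q_2}(\Z)$, namely
\begin{align*}
\Big(\sum_{\gamma\in\Z}a_\gamma^{q_2}\Big)^{1/q_2}\leq \Big(\sum_{\gamma\in\Z}a_\gamma^{q_1}\Big)^{1/q_1},\qquad a_\gamma\geq 0,
\end{align*}
applied to the non-negative sequence $a_\gamma:=\|e^{it\Delta_{x,y}}\phi\|_{L^p_{t,x,y}(\gamma-1,\gamma+1)}$. Combining with the bound supplied by Lemma \ref{thm barron} for the $\ell^{q_1}$-sum gives precisely
\begin{align*}
\Big(\sum_{\gamma\in\Z}\|e^{it\Delta_{x,y}}\phi\|^{q_2}_{L^p_{t,x,y}(\gamma-1,\gamma+1)}\Big)^{1/q_2}\lesssim \|\phi\|_{H_{x,y}^{2-6/p}}.
\end{align*}

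There is essentially no obstacle here: the only content is the verification of $q_1<q_2$, which is an algebraic identity independent of $p$, and the corollary then follows as a formal consequence of Lemma \ref{thm barron} together with the monotonicity of $\ell^r$-norms in the summation exponent $r$. Accordingly, I would present the argument as a two-line computation rather than a separate lemma.
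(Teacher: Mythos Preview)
Your argument is correct and coincides with the paper's own proof, which simply invokes the embedding $\ell^{q_1}\hookrightarrow\ell^{q_2}$ for $q_2>q_1$ after noting that the $q$-exponent in the corollary is larger than the one in Lemma~\ref{thm barron}. Your explicit verification that $\tfrac{2}{q_1}-\tfrac{2}{q_2}=\tfrac12$ is a clean way to confirm $q_1<q_2$.
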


\subsection{Small data and stability theories}
We collect in this subsection the small data and stability theories for \eqref{nls} that were originally given in \cite{RmT1}, where the defocusing analogue of \eqref{nls} was studied. We shall begin with defining the scattering $Z$-norm. For a time interval $I\subset\R$, we define the space $Z(I)$ via the norm
\begin{align*}
\|u\|_{Z(I)}:=\bg(\sum_{N\geq 1}N^{6-p}\bg(\sum_{\gamma\in\Z}\|\chi_I(t)P_Nu\|^q_{L_{t,x,y}^p(\gamma-1,\gamma+1)}\bg)^{\frac{p}{q}}\bg)^{\frac1p},
\end{align*}
where $2/q+3/p=1$ with $p\in (5,11/2)$ . The precise value of $p$ is nonetheless of no importance. Notice also that by Corollary \ref{cor barron} we have for any $\phi\in H_{x,y}^1$
\begin{align}\label{upper bound zi norm}
\|e^{it\Delta_{x,y}}\phi\|_{Z(\R)}\lesssim \|\phi\|_{H_{x,y}^1}.
\end{align}

We are now ready to state the small data and stability results for \eqref{nls} due to Zhao \cite{RmT1}.
\begin{lemma}[Small data theory, \cite{RmT1}]\label{lemma small data}
Let $I$ be a time interval containing zero. Suppose that a function $u_0\in H_{x,y}^1$ satisfies
$$\|u_0\|_{H_{x,y}^1}\leq A.$$
Then there exists some $\delta=\delta(A)$ such that if
$$\|e^{it\Delta_{x,y}}u_0\|_{Z(I)}\leq \delta,$$
then \eqref{nls} possesses a unique strong solution $u$ in $X_c(I)$ with $u(0)=u_0$. Moreover, if a solution $u\in X_{c,\mathrm{loc}}(I)$ satisfies
$$\|u\|_{Z(I)}<\infty,$$
then
\begin{itemize}
\item[(i)] If $I$ is finite, then $u$ can be extended to some strictly larger interval $J$ with $I\subsetneq J \subset\R$.

\item[(ii)] If $I$ is infinite, then $u\in X_c(I)$.
\end{itemize}
\end{lemma}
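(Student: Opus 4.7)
The plan is a standard Banach fixed-point construction in the space $X(I)$, following the scheme first developed for the energy-critical NLS on $\T^3$ in \cite{HerrTataruTz1} and adapted to $\R^3\times\T$ in \cite{RmT1}. I would rewrite \eqref{nls} in Duhamel form
\[
\Phi(u)(t) := e^{it\Delta_{x,y}} u_0 + i\int_0^t e^{i(t-s)\Delta_{x,y}}\bigl(|u|^{2} u\bigr)(s)\,ds
\]
(noting that $4/(d-1)=2$ for $d=3$) and consider $\Phi$ on the closed ball
\[
\mathcal{B}_{R,\delta}(I) := \bigl\{u \in X(I): \|u\|_{X(I)} \leq R,\ \|u\|_{Z(I)} \leq 2\delta\bigr\},
\]
with $R := 2CA$, where $C$ is the constant in the linear estimate $\|e^{it\Delta_{x,y}} u_0\|_{X(I)} \leq C\|u_0\|_{H_{x,y}^1}$.

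The crucial analytic input is a trilinear nonlinear estimate of the form
\[
\|u_1\overline{u_2}u_3\|_{N(I)} \lesssim \|u_1\|_{Z(I)}^{\theta}\|u_1\|_{X(I)}^{1-\theta}\|u_2\|_{X(I)}\|u_3\|_{X(I)}
\]
(and its symmetric permutations) for some $\theta\in(0,1)$, established in \cite{RmT1} via Littlewood-Paley decomposition combined with the scale-dependent Strichartz estimate of Corollary \ref{cor barron}. Combining this with the dual Duhamel bound $\|\int_0^t e^{i(t-s)\Delta_{x,y}} F\,ds\|_{X(I)} \lesssim \|F\|_{N(I)}$ and the interpolation-type control $\|\Phi(u)\|_{Z(I)} \leq \|e^{it\Delta_{x,y}} u_0\|_{Z(I)} + c\|\Phi(u)-e^{it\Delta_{x,y}} u_0\|_{X(I)}$ coming from \eqref{upper bound zi norm} and the embedding $X\hookrightarrow Y$, one obtains for $u,v\in\mathcal{B}_{R,\delta}(I)$ the bounds
\[
\|\Phi(u)\|_{X(I)} \leq CA + C\delta^{\theta}R^{3-\theta},\qquad \|\Phi(u)-\Phi(v)\|_{X(I)} \leq C\delta^{\theta}R^{2-\theta}\|u-v\|_{X(I)},
\]
together with matching estimates on the $Z$-norm. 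Choosing $\delta=\delta(A)$ small enough that $C\delta^{\theta}R^{3-\theta}\leq CA$ and $C\delta^{\theta}R^{2-\theta}<\tfrac12$ makes $\Phi$ a strict contraction on $\mathcal{B}_{R,\delta}(I)$; its unique fixed point is the sought solution $u\in X_c(I)$, with uniqueness in the class an immediate byproduct of the contraction estimate.

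For the extension criterion, suppose $u\in X_{c,\mathrm{loc}}(I)$ and $\|u\|_{Z(I)}<\infty$. By absolute continuity of the $Z$-norm with respect to its temporal cutoff I partition $I$ into finitely many consecutive sub-intervals $I_1,\dots,I_n$ on each of which $\|u\|_{Z(I_j)}\leq\delta/2$ (with $\delta$ from the small-data step applied to the uniform $H_{x,y}^1$ bound for $u$ on $I$, which follows from $u\in X_{c,\mathrm{loc}}(I)\hookrightarrow L^\infty_tH_{x,y}^1(I)$ restricted to each compact sub-interval and then patched). Running the fixed-point argument from the left endpoint of each $I_j$ with initial datum $u|_{t=\inf I_j}$ yields $u\in X_c(I_j)$, and concatenation gives assertion (ii), $u\in X_c(I)$. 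For (i), with $I=[a,b)$ finite, $u\in X_c(I)$ supplies a limit $u(b)\in H_{x,y}^1$; since $\|e^{it\Delta_{x,y}} u(b)\|_{Z([b,b+\tau])}\to 0$ as $\tau\to 0^+$ by \eqref{upper bound zi norm} and dominated convergence, the small-data step applied at $t=b$ produces an extension to $[a,b+\tau']$ for some $\tau'>0$, strictly enlarging $I$.

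The main obstacle is the trilinear $N$-estimate itself: this is the deep technical ingredient from \cite{RmT1}, whose proof requires a frequency-localized bilinear Strichartz analysis on the mixed-type domain $\R^3\times\T$ with careful bookkeeping of the two distinct scales in the $X^s$/$Y^s$ and $Z$ norms. Modulo this black box, however, the remaining pieces---the linear Strichartz estimate, the $N$-$X$ duality, and the contraction mechanism---are entirely standard, and because the argument is purely perturbative the focusing sign of the nonlinearity plays no role here.
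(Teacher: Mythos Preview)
The paper does not supply its own proof of this lemma: it is stated with the citation \cite{RmT1} and used as a black box. Your proposal correctly reconstructs the standard contraction-mapping scheme in $X(I)$ that underlies the result in \cite{RmT1}, with the trilinear $N$-estimate as the essential analytic input and the usual sub-interval partition for the extension criterion; this is exactly the approach of the cited reference, and there is nothing further to compare.
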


\begin{remark}\label{small remark}
Using \eqref{upper bound zi norm} we infer that \eqref{nls} is always globally well-posed and scattering in time when $\|u_0\|_{H_{x,y}^1}$ is sufficiently small.
\end{remark}

\begin{lemma}[Stability theory, \cite{RmT1}]\label{lem stability cnls}
Let $I\subset\R$ be an interval containing zero and let $\tilde{u}\in X(I)$ be an approximate solution of the perturbed NLS
\begin{align*}
i\pt_t \tilde{u}+\Delta_{x,y}u=-|\tilde{u}|^2 \tilde{u}+e
\end{align*}
with some error term $e$. Suppose also that there exists some $M>0$ such that
\begin{align*}
\|\tilde{u}\|_{Z(I)}+\|\tilde{u}\|_{L_t^\infty H_{x,y}^1(I)}\leq M.
\end{align*}
Then there exists some positive $\vare_0=\vare_0(M)\ll 1$ such that if
\begin{align*}
\|\tilde{u}(0)-u_0\|_{H_{x,y}^1}+\|e\|_{N(I)}\leq \vare<\vare_0,
\end{align*}
then there exists a solution $u\in X(I)$ of \eqref{nls} with $u(0)=u_0$ and
\begin{align*}
\|u\|_{X(I)}\leq C(M)\quad\text{and}\quad
\|u-\tilde{u}\|_{X(I)}\leq C(M)\vare.
\end{align*}
\end{lemma}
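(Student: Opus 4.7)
The plan is to set up a bootstrap/iteration scheme on a carefully chosen partition of $I$. Writing $w := u - \tilde{u}$, the function $w$ formally solves
\begin{align*}
i\pt_t w + \Delta_{x,y} w = -\bigl(|\tilde{u}+w|^2(\tilde{u}+w) - |\tilde{u}|^2 \tilde{u}\bigr) - e, \qquad w(0) = u_0 - \tilde{u}(0),
\end{align*}
with $\|w(0)\|_{H^1_{x,y}} + \|e\|_{N(I)} \leq \vare$. Expanding the nonlinear difference as
\begin{align*}
|\tilde{u}+w|^2(\tilde{u}+w) - |\tilde{u}|^2 \tilde{u} = O(|\tilde{u}|^2 |w|) + O(|\tilde{u}|\, |w|^2) + O(|w|^3),
\end{align*}
I would use the $N$/$X$-duality together with the tri-linear Strichartz estimates encoded in the $X^s$/$Y^s$ framework (and implicit in Lemma \ref{lemma small data}) to bound each cubic term in $N(J)$ by products of $Z$- and $X$-norms of $\tilde{u}$ and $w$ over a subinterval $J \subset I$.

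Using the hypothesis $\|\tilde{u}\|_{Z(I)} \leq M$, I would partition $I = \bigcup_{j=1}^{J_0} I_j$ (with $J_0 = J_0(M,\eta)$) into consecutive subintervals on which $\|\tilde{u}\|_{Z(I_j)} \leq \eta$, where $\eta = \eta(M)$ is to be chosen sufficiently small. On $I_1$, Duhamel's formula combined with the $X$/$N$ Strichartz estimate gives a bound of the schematic form
\begin{align*}
\|w\|_{X(I_1)} \leq C_0\bigl(\|w(0)\|_{H^1_{x,y}} + \|e\|_{N(I_1)}\bigr) + C_0\eta^2\|w\|_{X(I_1)} + C_0\eta\|w\|_{X(I_1)}^2 + C_0\|w\|_{X(I_1)}^3.
\end{align*}
Choosing $\eta$ small enough that $C_0\eta^2 \leq 1/2$, a standard continuity/bootstrap argument (valid because the cubic terms are superlinear in $\|w\|_{X(I_1)}$) closes and delivers $\|w\|_{X(I_1)} \leq 2C_0\vare$, provided $\vare$ is small depending on $M$. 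Evaluating at $t_1 := \sup I_1$ one finds a new error bound $\lesssim C_0\vare$ in $H^1_{x,y}$; repeating the argument on $I_2$, and so on, one obtains after $J_0$ iterations $\|w\|_{X(I)} \leq (2C_0)^{J_0}\vare =: C(M)\vare$. The existence of the genuine solution $u$ on each $I_j$ is obtained in parallel by invoking the local existence part of Lemma \ref{lemma small data}, and the uniform $X$-bound prevents blow-up at the partition endpoints.

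The main obstacle is proving the tri-linear estimate that controls $\||\tilde{u}|^2 w\|_{N(J)}$ (and its cubic siblings) by $\|\tilde{u}\|_{Z(J)}^2 \|w\|_{X(J)}$; this is precisely where the delicate $X^s$/$Y^s$/$U^p$/$V^p$-machinery of \cite{HerrTataruTz1,HerrTataruTz2,HaniPausader,RmT1} together with the sharp Strichartz estimate of Lemma \ref{thm barron} becomes essential, through a Littlewood-Paley decomposition and careful handling of high-low and equal-frequency interactions on $\R^3\times\T$. A pragmatic route is to invoke this estimate as a black box already implicit in Lemma \ref{lemma small data}, reducing the task to the iteration outlined above; the exponential dependence of $\vare_0$ on $M$ reflects precisely the number of steps $J_0$ needed to exhaust $I$.
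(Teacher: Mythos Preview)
The paper does not actually prove this lemma: it is stated with a citation to \cite{RmT1} and no proof is given. Your outline is the standard and correct route for such stability statements in the $X^s/Y^s$ framework, and it matches the argument in the cited reference (and in \cite{HaniPausader,Ionescu2}); the only substantive input, as you correctly identify, is the trilinear estimate $\||u_1 u_2 u_3|\|_{N(J)} \lesssim \|u_{j_1}\|_{Z(J)}^2\|u_{j_2}\|_{X(J)}$ on $\R^3\times\T$, which is established in \cite{RmT1} and which you appropriately treat as a black box.
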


\subsection{Linear profile decomposition}
The present subsection is devoted to introducing a suitable profile decomposition for the model problem \eqref{nls}, which being a standard preliminary for a rigidity proof based on the concentration compactness arguments. We shall invoke the profile decomposition applied in \cite{RmT1} for the study of the defocusing analogue of \eqref{nls}, which follows the same fashion as the ones given in \cite{hyperbolic,Ionescu1,Ionescu2,HaniPausader}.

We firstly fix some necessary notation. Let $\eta\in C_c^\infty(\R^4;[0,1])$ be the same auxiliary function given previously for constructing the Littlewood-Paley projectors (see the beginning of Section \ref{sec littlewood paley}). For a function $\phi\in \dot{H}^1(\R^4)$ and a number $N\geq 1$ we define the function $\phi_N$ by
\begin{align*}
\phi_N(z):=N\eta(N^{\frac12}z)\phi(Nz).
\end{align*}
Let now $\Psi:\{z\in\R^4:|z|\leq 1\}\to\R^3\times\T$ be the identity mapping. Then we define the function $f_N(z)$ for $z=(x,y)\in\R^3\times\T$ by
\begin{align*}
f_N(z):=\phi_N(\Psi^{-1}(z)).
\end{align*}
The definition of $f_N$ is at the first glance somewhat misunderstanding, since $\Psi^{-1}(z)$ is not well-defined for arbitrary points $z\in \R^3\times \T$. We shall make the following convention to clarify the definition of $f_N$:
\begin{itemize}
\item For $z\in\R^3\times\T$, we identify $z$ as the point locating at $\R^3\times [-\pi,\pi]$.
\item For $z\in \R^3\times [-\pi,\pi]$, if $|z|>1$, then we simply set $f_N(z)=0$.
\end{itemize}
In other words, for all sufficiently large $N$ (which will be the case for a Euclidean profile) $f_N$ is nothing else but the periodic extension of $\phi_N$ along the $y$-direction modulo $2\pi$. Using Sobolev's embedding and H\"older's inequality one easily verifies that $f_N\in H^1_{x,y}$ and
\begin{align*}
\limsup_{N\to\infty}\|f_N\|_{H_{x,y}^1}\lesssim \|\phi\|_{\dot{H}^1(\R^4)}.
\end{align*}
For our purpose we will need the following stronger statement on the asymptotics of $f_N$.
\begin{lemma}\label{fn to phi}
We have $\|f_N\|_2^2=o_N(1)$ and
\begin{align*}
\|\phi\|^2_{\dot{H}^1(\R^4)}&=\|f_N\|^2_{\dot{H}_{x,y}^1}+o_N(1),\\
\|\phi\|^4_{L^4(\R^4)}&=\|f_N\|_{4}^4+o_N(1)
\end{align*}
as $N\to\infty$.
\end{lemma}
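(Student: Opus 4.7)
The plan is to transport the three assertions to $\R^4$, where $\phi_N$ is a standard concentrating Euclidean profile, \emph{critical} with respect to the embedding $\dot H^1(\R^4)\hookrightarrow L^4(\R^4)$. Once $N$ is large enough that $\mathrm{supp}\,\eta(N^{1/2}\cdot)$ lies strictly inside the unit ball of $\R^4$, the function $\phi_N$ vanishes near the boundary of the fundamental domain $\R^3\times[-\pi,\pi]$, and the periodic extension defining $f_N$ creates no jumps. Consequently
\begin{align*}
\|f_N\|_{L^p(\R^3\times\T)}=\|\phi_N\|_{L^p(\R^4)},\qquad \|\nabla_{x,y}f_N\|_{L^2(\R^3\times\T)}=\|\nabla_{\R^4}\phi_N\|_{L^2(\R^4)},
\end{align*}
so everything reduces to identities for $\phi_N$ on $\R^4$, which I attack via the change of variables $w=Nz$.

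After substitution, the $L^4$ claim becomes $\int\eta(N^{-1/2}w)^4|\phi(w)|^4\,dw\to\|\phi\|_{L^4(\R^4)}^4$, which follows immediately from dominated convergence (using $\eta(N^{-1/2}w)\to 1$ pointwise, $|\eta|\le 1$, and $\phi\in L^4(\R^4)$ by Sobolev). The $L^2$ statement becomes $\|\phi_N\|_{L^2(\R^4)}^2=N^{-2}\int\eta(N^{-1/2}w)^2|\phi(w)|^2\,dw$, and since the integrand is supported in a ball of radius $O(N^{1/2})$, H\"older combined with $\phi\in L^4(\R^4)$ gives an upper bound of order $N^{-2}\cdot|B_{O(N^{1/2})}|^{1/2}\|\phi\|_{L^4}^{2}\sim N^{-1}$, hence $o_N(1)$.

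For the $\dot H^1$ identity I split the gradient as
\begin{align*}
\nabla\phi_N(z)=\underbrace{N^2\eta(N^{1/2}z)(\nabla\phi)(Nz)}_{=:\,B_N(z)}+\underbrace{N^{3/2}(\nabla\eta)(N^{1/2}z)\phi(Nz)}_{=:\,A_N(z)}.
\end{align*}
The main piece scales as $\|B_N\|_{L^2(\R^4)}^2=\int\eta(N^{-1/2}w)^2|\nabla\phi(w)|^2\,dw\to\|\phi\|_{\dot H^1(\R^4)}^2$ by dominated convergence, and the cross term is absorbed by Cauchy--Schwarz once $\|A_N\|_{L^2}=o_N(1)$ is known. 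The main obstacle, and the only step requiring genuine care, is the cut-off remainder $A_N$. After scaling,
\begin{align*}
\|A_N\|_{L^2(\R^4)}^2=N^{-1}\int|(\nabla\eta)(N^{-1/2}w)|^2|\phi(w)|^2\,dw,
\end{align*}
and the support of $(\nabla\eta)(N^{-1/2}w)$ lies in an annulus $\{|w|\gtrsim N^{1/2}\}$ of volume $\sim N^{2}$. A brute-force H\"older estimate using $\|\phi\|_{L^4(\R^4)}$ gives only an $O(1)$ bound, so the refinement I would use is to restrict $\phi$ to the annulus first: H\"older there yields $\|A_N\|_{L^2}^2\lesssim\|\phi\|_{L^4(\{|w|\gtrsim N^{1/2}\})}^2$, and this tail vanishes as $N\to\infty$ simply because $\phi\in L^4(\R^4)$ and the annulus escapes to infinity. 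This tail-smallness is the one point where the genuine integrability of $\phi$ at infinity, rather than mere homogeneity under scaling, is exploited.
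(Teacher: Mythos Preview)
Your proof is correct and follows essentially the same route as the paper's: reduce to $\R^4$ once the cut-off localizes inside the fundamental domain, change variables $w=Nz$, use dominated convergence for the $L^4$ and main-gradient pieces, and H\"older for the $L^2$ bound. The one minor difference is in the cut-off remainder $A_N$: the paper controls it via a density argument (approximate $\phi$ by $\tilde\phi\in C_c^\infty(\R^4)$ in $\dot H^1$, then use $\tilde\phi\in L^2$), whereas you bypass density by observing directly that $\|A_N\|_{L^2}^2\lesssim\|\phi\|_{L^4(\{|w|\gtrsim N^{1/2}\})}^2\to 0$; your variant is arguably cleaner.
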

\begin{proof}
We only prove the statements concerning the $L_{x,y}^2$- and $\dot{H}_{x,y}^1$-norms of $f_N$, the one for the $L_{x,y}^4$-norm can de deduced similarly. Notice that when $N$ tends to infinity the function $f_N$ concentrates to the zero point, thus any integration over $\R^3\times\T$ can be replaced to an integration over $\R^4$ for all sufficiently large $N$. For $\|f_N\|^2_2$, using change of variable, H\"older and Sobolev's embedding we obtain
\begin{align*}
\|f_N\|_2^2&= N^2\int_{\R^4}|\eta(N^{\frac12}z)\phi(Nz)|^2\,dz
=N^{-2}\int_{\R^4}|\eta(N^{-\frac12}z)\phi(z)|^2\,dz\nonumber\\
&\lesssim N^{-2}\bg(\int_{\R^4}|\eta(N^{-\frac12}z)|^4\,dz\bg)^{\frac12}\|\phi\|_{L^4(\R^4)}^2
\lesssim N^{-1}\|\phi\|_{\dot{H}^1(\R^4)}^2=o_N(1).
\end{align*}
Next, by product rule we know
\begin{align*}
\nabla_{z}\phi_N=N^{\frac32}\nabla_z\eta(N^{\frac12}z)\phi(Nz)+N^{2}\eta(N^{\frac12}z)\nabla_z\phi(Nz)=:I+II.
\end{align*}
By dominated convergence theorem we already infer that $\|II\|_{L^2(\R^4)}^2\to \|\phi\|_{\dot{H}^1(\R^4)}^2$. For $I$, we choose some $\tilde{\phi}\in C_c^\infty(\R^4)$ such that $\|\phi-\tilde{\phi}\|_{\dot{H}^1(\R^4)}\leq\vare$, where $\vare>0$ is some arbitrarily chosen constant. Then using H\"older
\begin{align*}
\|I-N^{\frac32}\nabla_z\eta(N^{\frac12}z)\tilde{\phi}(Nz)\|^2_{L^2(\R^4)}\lesssim \|\phi-\tilde{\phi}\|_{L^4(\R^d)}^2\leq \vare^2.
\end{align*}
Since $\vare$ is arbitrarily chosen, it suffices to show
\begin{align*}
N^3\int_{\R^4}|\nabla_z\eta(N^{\frac12}z)|^2|\tilde{\phi}(Nz)|^2\,dz=o_N(1).
\end{align*}
But using change of variable and the uniform boundedness of $\nabla_z\eta$ we obtain
\begin{align*}
N^3\int_{\R^4}|\nabla_z\eta(N^{\frac12}z)|^2|\tilde{\phi}(Nz)|^2\,dz
\lesssim N^{-1}\int_{\R^4}|\tilde{\phi}(z)|^2\,dz=o_N(1),
\end{align*}
as desired.
\end{proof}

Next, for $(f,t_0,p_0)\in L_{x,y}^2\times\R\times(\R^3\times\T)$ and $\phi \in \dot{H}^1(\R^4)$ we define the operators
\begin{align*}
\pi_{p_0}f&:=f(z-p_0),
\\
\Pi_{t_0,p_0}f&:= e^{-it_0\Delta}f(z-p_0)=\pi_{p_0}(e^{-it_0\Delta}f),
\\
\mathcal{T}_N \phi&:=f_N(\phi)=\phi_N(\Psi^{-1}(z)).
\end{align*}
We are now ready to introduce the concepts of \textit{Euclidean} and \textit{scale-one} profiles

\begin{definition}[Frames and profiles]
We define a \textbf{frame} $\mathcal{F}$ to be a sequence $(N_n,t_n,p_n)_n$ in $2^{\N_0}\times \R\times(\R^3\times\T)$. We also define two special classes of frames:
\begin{itemize}
\item A \textbf{Euclidean frame} $\mathcal{F}$ is a frame satisfying
\begin{itemize}
\item[(i)]$\lim_{n\to\infty}N_n=\infty$.
\item[(ii)]$t_n\equiv0$ for all $n\in\N$ or $\lim_{n\to\infty} |t_n N_n^2| =\infty$.
\end{itemize}
\item A \textbf{scale-one frame} $\mathcal{F}$ is a frame satisfying
\begin{itemize}
\item[(i)]$N_n\equiv 1$ for all $n\in\N$.
\item[(ii)]$t_n\equiv0$ for all $n\in\N$ or $\lim_{n\to\infty} |t_n| =\infty$.
\end{itemize}
\end{itemize}
Associated to each Euclidean or scale-one frame, we define a \textbf{profile} as follows:
\begin{itemize}
\item If $\mathcal{F}$ is a Euclidean frame, then for $\phi\in \dot{H}^1(\R^4)$ we define the \textbf{Euclidean profile} $T_n \phi$ by
\begin{align*}
T_n\phi:=\Pi_{t_n,p_n}\mathcal{T}_{N_n}\phi.
\end{align*}
\item If $\mathcal{F}$ is a scale-one frame, then for $\phi\in H_{x,y}^1$ we define the \textbf{scale-one profile} $T_n \phi$ by
\begin{align*}
T_n\phi:=\Pi_{t_n,p_n}\phi.
\end{align*}
\end{itemize}
\end{definition}

\begin{remark}
In the rest of the paper, a frame will always be referred to as a Euclidean or a scale-one frame.
\end{remark}

We have the following linear profile decomposition for a bounded sequence in $H_{x,y}^1$ according to \cite{RmT1}. The version stated here is slightly different from the original one given in \cite{RmT1} and is better suited to our context.

\begin{lemma}[Linear profile decomposition, \cite{RmT1}]\label{linear profile}
Let $(\psi_n)_n$ be a bounded sequence in $H_{x,y}^1$. Then up to a subsequence, there exist nonzero $(\phi^j)_j\subset \dot{H}^1(\R^4)\cup H_{x,y}^1$, a sequence of frames $(N_n^j,t_n^j,p_n^j)_{j,n}$, a sequence of remainders $(w_n^j)_{j,n}\subset H_{x,y}^1$ and some number $K^*\in\N\cup\{\infty\}$ such that
\begin{itemize}
\item[(i)]For any finite $1\leq k\leq K^*$ we have the decomposition
\begin{align*}
\psi_n=\sum_{j=1}^k T_n^j\phi^j+w_n^k.
\end{align*}

\item[(ii)] The remainders $(w_n^j)_{j,n}$ satisfy
\begin{align*}
\lim_{j\to K^*}\lim_{n\to\infty}\|e^{it\Delta_{x,y}}w_n^j\|_{Z(\R)}=0.
\end{align*}

\item[(iii)] The frames are orthogonal in the sense that
\begin{align*}
|\log (N_n^j/N_n^k)|+|t_n^k-t_n^j|(N_n^j)^2+|p_n^k-p_n^j|N_n^j\to\infty
\end{align*}
as $n\to\infty$ for any $j\neq k$.

\item[(iv)] For any finite $1\leq k\leq K^*$ and $D\in\{1,\pt_{x_i},\pt_y\}$ we have the energy decompositions
\begin{align*}
\|D\psi_n\|_{2}^2&=\sum_{j=1}^k\|D(T_n^j\tdu^j)\|_{2}^2+\|Dw_n^k\|_{2}^2+o_n(1),
\\
\|\psi_n\|_{4}^{4}&=\sum_{j=1}^k\|T_n^j\tdu^j\|_{4}^{4}
+\|w_n^k\|_{4}^{4}+o_n(1)
.
\end{align*}
\end{itemize}
\end{lemma}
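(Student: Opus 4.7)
The proof will follow the by-now standard Bahouri--Gérard--Keraani style iterative extraction argument, adapted to the waveguide $\R^3\times\T$ as in Ionescu--Pausader--Staffilani, Hani--Pausader and Zhao. The plan is to argue by induction: set $w_n^0:=\psi_n$, and at stage $k$ either $\|e^{it\Delta_{x,y}}w_n^{k-1}\|_{Z(\R)}\to 0$ along a subsequence (in which case we stop and set $K^*=k-1$), or an inverse Strichartz inequality produces a frame $(N_n^k,t_n^k,p_n^k)$ and a non-trivial weak limit $\phi^k$ (in $\dot H^1(\R^4)$ or $H^1_{x,y}$), and we put $w_n^k:=w_n^{k-1}-T_n^k\phi^k$. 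The two frame types arise from the dichotomy in the scale sequence: if the extracting frequency $N_n^k$ diverges, the local geometry of $\R^3\times\T$ near the concentration point $p_n^k$ is asymptotically Euclidean and we obtain a Euclidean profile in $\dot H^1(\R^4)$; if the extracting frequency stays bounded we normalize to $N_n^k\equiv 1$ and obtain a scale-one profile in $H^1_{x,y}$. The subcase $|t_n^k(N_n^k)^2|\to\infty$ (respectively $|t_n^k|\to\infty$) absorbs residual time translations that cannot be killed by weak limits at $t=0$.

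The heart of the argument is the inverse Strichartz step. Given $\liminf_n\|e^{it\Delta_{x,y}}w_n\|_{Z(\R)}\geq\delta>0$ together with uniform $H^1_{x,y}$-bounds, one combines the Littlewood--Paley characterization of $Z$ with the Strichartz estimate of Corollary~\ref{cor barron} to find a dyadic frequency $N_n$ and a space--time cube of size $\sim N_n^{-1}$ on which the linear evolution has mass bounded below. A bilinear refinement (using the improved Strichartz for frequency-localized pieces) then produces a single point $(t_n,p_n)$ at which $e^{it_n\Delta_{x,y}}w_n(p_n)$ is appreciably large after rescaling. One then considers the rescaled and translated sequence $(T_n)^{-1}w_n$ and extracts a weak subsequential limit $\phi^k$. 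When $N_n^k\to\infty$ we regard $\phi^k$ as an element of $\dot H^1(\R^4)$ by unrolling the torus, using the asymptotic identities of Lemma~\ref{fn to phi} to compare norms of $T_n^k\phi^k=\Pi_{t_n^k,p_n^k}\mathcal{T}_{N_n^k}\phi^k$ with those of the Euclidean bubble; when $N_n^k\equiv 1$ no rescaling is needed.

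Once extraction is in place, the orthogonality statement (iii) is forced, because any failure of orthogonality would let us further decompose $\phi^j$ or $\phi^k$ and contradict the maximality built into the inverse Strichartz step (i.e., one always extracts a profile whose norm is comparable to the supremum attained on the remaining part). Orthogonality of the frames, combined with the $L^2$-unitarity of $\Pi_{t_n^j,p_n^j}$ and the localization of Euclidean bubbles, yields the Pythagorean decompositions in (iv) via a standard Brezis--Lieb argument applied frequency-by-frequency; for the $L^4_{x,y}$-piece one uses the corresponding Brezis--Lieb lemma together with the pointwise cancellation coming from orthogonality. The iteration terminates (or extends to $K^*=\infty$) because the total extracted $\dot H^1$-energy is controlled by the initial $H^1_{x,y}$-bound, so $\|\phi^j\|\to 0$ as $j\to K^*$; a quantitative version of the inverse Strichartz inequality then forces $\lim_{j\to K^*}\lim_n\|e^{it\Delta_{x,y}}w_n^j\|_{Z(\R)}=0$, which is (ii).

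The main technical obstacle is the inverse Strichartz step in the regime $N_n^k\to\infty$: one must show that after renormalization the weak limit actually lives in $\dot H^1(\R^4)$ and that the approximation $T_n^k\phi^k\approx \Pi_{t_n^k,p_n^k}\mathcal{T}_{N_n^k}\phi^k$ is compatible with the $H^1_{x,y}$-calculus on $\R^3\times\T$. This is precisely where Lemma~\ref{fn to phi}, the cut-off structure of $\mathcal{T}_N$ and the fact that the torus direction "opens up" under rescaling by $N_n^k$ enter the argument. A secondary difficulty is the bookkeeping of the two time regimes $t_n^k=0$ versus $|t_n^k(N_n^k)^2|\to\infty$, which must be tracked throughout the orthogonality and $Z$-norm estimates since $e^{it\Delta_{x,y}}$ does not commute with all translations in a scale-invariant way; this is handled exactly as in Zhao's treatment of the defocusing analogue.
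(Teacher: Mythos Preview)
The paper does not give its own proof of this lemma: it is stated as a citation of Zhao \cite{RmT1} (with the remark that the formulation is slightly adapted), so there is no in-paper argument to compare against. Your outline is the standard Bahouri--G\'erard--Keraani scheme as implemented in \cite{hyperbolic,Ionescu1,Ionescu2,HaniPausader,RmT1}, and at the level of a sketch it is sound; in particular the dichotomy between Euclidean and scale-one frames, the inverse Strichartz extraction, the orthogonality-by-maximality, and the Brezis--Lieb energy splitting are exactly the ingredients used in those references.
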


We end this subsection with the following small scale approximation result for a Euclidean profile.

\begin{lemma}[Small scale approximation]\label{small scale proxy lem lem}
Let $\mathcal{F}=(N_n,t_n,p_n)_n$ be a Euclidean frame and let $\phi\in \dot{H}^1(\R^4)$ satisfy \eqref{threshold assumption rd}. Let also $U_n$ be a solution of \eqref{nls} with $U_n(0)=\Pi_{t_n,p_n}\mathcal{T}_{N_n}\phi$. Then
\begin{itemize}
\item[(i)] For all sufficiently large $n$ the solutions $U_n$ are global and scattering. Moreover, we have
\begin{align*}
\limsup_{n\to\infty}\|U_n\|_{X(\R)}\lesssim_{\mH^*(\phi)}1.
\end{align*}

\item[(ii)] Let $u$ be the global solution of \eqref{rd nls} with $u(0)=\phi$ and let $\phi^{\pm}\in \dot{H}^1(\R^4)$ be the scattering data such that
\begin{align*}
\lim_{t\to\pm\infty}\|u(t)-e^{it\Delta_{\R^d}}\phi^{\pm}\|_{\dot{H}^1(\R^4)}=0.
\end{align*}
For $x\in \R^3\times\T$ and $R>0$ define
\begin{align*}
u_{n,R}(t,z):=\pi_{p_n}[N_n\,\eta (N_n\Psi^{-1}(z)/R)\,u(N_n^2(t-t_n),N_n\Psi^{-1}(z))].
\end{align*}
Then
\begin{align*}
\lim_{T\to\infty}\lim_{R\to\infty}\lim_{n\to\infty}\|U_n-u_{n,R}\|_{X(|t-t_n|\leq N_n^{-2}T)}&=0,\\
\lim_{T\to\infty}\lim_{n\to\infty}\|U_n-\Pi_{t-t_n,p_n}\mathcal{T}_{N_n}\phi^{\pm}\|_{X(|t-t_n|\geq N_n^{-2}T)}&=0.
\end{align*}
\end{itemize}
\end{lemma}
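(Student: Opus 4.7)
The plan is to build an approximate solution $\tilde U_n$ to \eqref{nls} by gluing together two regimes, then to transfer the conclusions to the exact solution $U_n$ via the stability theory (Lemma \ref{lem stability cnls}). The building blocks come from Theorem \ref{theorem focusing dodson}, which applies thanks to the standing hypothesis \eqref{threshold assumption rd} and yields a global scattering solution $u$ of \eqref{rd nls} with $u(0)=\phi$, scattering data $\phi^{\pm}\in\dot{H}^1(\R^4)$, and a bound on $\|u\|_{L^3_t\dot{W}^{1,3}_z(\R^4)}$ depending only on $\mH^*(\phi)$. As a technical preliminary I would first replace $u$ by a smooth and space-time compactly supported approximation, modifying the scattering data by at most $\vare$ in $\dot{H}^1(\R^4)$; this is a standard density exercise once Dodson's theorem is in hand.

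In the interior time window $|t-t_n|\le N_n^{-2}T$, I would take $u_{n,R}$ itself as the approximate solution. A direct computation after the change of variables $s=N_n^2(t-t_n)$ and $w=N_n\Psi^{-1}(z-p_n)$ shows that $u_{n,R}$ satisfies \eqref{nls} up to an error $e_{n,R}$ supported where the cut-off $\eta(N_n\Psi^{-1}(z)/R)$ is non-constant or where the product Laplacian $\Delta_{x,y}$ fails to coincide with the flat Laplacian $\Delta_{\R^4}$ after lifting through $\Psi$. Both contributions are controlled in $N(|t-t_n|\le N_n^{-2}T)$ by first sending $n\to\infty$ (which trivializes the torus at the small scale $N_n^{-1}$) and then $R\to\infty$ (which captures the essential support of $u$), so the error becomes $o_{n,R}(1)$ in the Bourgain-adapted $N$-norm. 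In the exterior regions $\pm(t-t_n)\ge N_n^{-2}T$ I would instead use the pure linear evolution $\Pi_{t-t_n,p_n}\mathcal{T}_{N_n}\phi^\pm$, exploiting the $\R^4$-scattering relation $u(s)\to e^{is\Delta_{\R^4}}\phi^\pm$ in $\dot{H}^1(\R^4)$ and transferring it to $\R^3\times\T$ by Lemma \ref{fn to phi}; the nonlinear residual $|\tilde U_n|^2\tilde U_n$ on this exterior region is shown to tend to zero in $N(\R)$ by combining the dispersive estimate on $\R^3$ with the Sobolev inequality on $\T$, exactly as foreshadowed in item (ii) of the introduction. The uniform bound on $\|\tilde U_n\|_{Z(\R)\cap L^\infty_tH^1_{x,y}}$ then follows from the Strichartz estimate of Corollary \ref{cor barron} applied to both pieces together with the scattering norm of $u$.

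Given these ingredients, the mismatch of $\tilde U_n$ at the gluing times $|t-t_n|=N_n^{-2}T$ is $o_{n,R,T}(1)$ in $H^1_{x,y}$; a diagonal selection of $T,R,n$ makes the total error small enough to invoke Lemma \ref{lem stability cnls}, delivering the existence, the global bound in (i), and both approximation statements of (ii). The main technical obstacle is the interior step: the $N$-norm estimate for $e_{n,R}$ requires converting the pointwise smallness of the curvature correction and the cut-off error into a genuine estimate in the atomic $X^s$-$Y^s$ machinery at the energy-critical level, so one cannot just rely on Strichartz on Lebesgue spaces. A secondary obstacle is the $Z(\R)$-boundedness of $\Pi_{t-t_n,p_n}\mathcal{T}_{N_n}\phi^\pm$ when $|t_n|N_n^2\to\infty$: the naive Sobolev bound leaves an $N_n^{-1/4}$ gap, and compensating this gap via the torus-side Sobolev estimate (so as not to spoil the focusing variational structure) is the key technical point highlighted already in the introduction.
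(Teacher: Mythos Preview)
Your outline is essentially the strategy carried out in \cite[Prop.~5.4]{HaniPausader} and \cite[Thm.~5.4]{RmT1}, to which the paper simply defers; the only modification the paper makes is to replace the defocusing $\R^4$ black box by Theorem~\ref{theorem focusing dodson}, which is exactly what you do. So the main line of your argument is correct and aligned with the paper.

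There is, however, a misplaced concern in your final paragraph. The $N_n^{-1/4}$ gap and the torus-side Sobolev compensation flagged in item~(ii) of the introduction are \emph{not} part of the proof of this lemma. That issue arises later, in Step~2 of Lemma~\ref{Palais Smale}, where one must \emph{verify} the hypothesis \eqref{threshold assumption rd} for a Euclidean profile when $|t_n^j|(N_n^j)^2\to\infty$; specifically, one needs $\|T_n^j\phi^j\|_4=o_n(1)$ to conclude $\mH^*(\phi^j)\le\mH_0$. In the present lemma \eqref{threshold assumption rd} is \emph{assumed}, so Dodson's theorem applies directly and no such decay estimate is needed. The exterior smallness of the nonlinear residual on $\Pi_{t-t_n,p_n}\mathcal{T}_{N_n}\phi^\pm$ follows instead from the standard extinction of the $Z$-norm of the linear flow as $T\to\infty$ (cf.\ \cite[Lem.~4.2--4.3]{HaniPausader}), not from the dispersive-plus-torus-Sobolev argument you invoke. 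Removing that last paragraph, your sketch is sound.
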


\begin{proof}
The proof is almost identical to the proof of \cite[Thm. 5.4]{RmT1} and \cite[Prop. 5.4]{HaniPausader}. The only different step is to replace the scattering result from \cite{defocusing4d} for the defocusing analogue of \eqref{rd nls} applied in the proof of \cite[Thm. 5.4]{RmT1} to Theorem \ref{theorem focusing dodson}. In order to apply Theorem \ref{theorem focusing dodson}, we shall therefore additionally assume that $\phi$ satisfies \eqref{threshold assumption rd}.
\end{proof}

\subsection{The MEI-functional and its properties}
In this subsection we introduce the mass-energy-indicator (MEI) functional $\mD$ and state some of its very useful properties which play a fundamental role for setting up an inductive hypothesis of a contradiction proof. The MEI-functional was firstly introduced in \cite{killip_visan_soliton} for the study of the focusing-defocusing 3D cubic-quintic NLS and further applied in \cite{ArdilaDipolar,killip2020cubicquintic,Luo_JFA_2022,Luo_DoubleCritical,Luo_Waveguide_MassCritical,CubicQuinticPotential,Luo_inter} for different models. Such inductive scheme is particularly useful when the inductive scheme is multidirectional (for instance in our case we need to consider the mass and energy separately). Since the proofs of the to be listed statements are identical to the ones given in \cite{Luo_inter}, we shall simply omit the details here.

To begin with, we firstly define the domain $\Omega\subset \R^2$ by
\begin{align*}
\Omega&:=\bg((-\infty,0]\times \R\bg)\cup\bg\{(c,h)\in\R^2:c\in(0,\infty),h\in(-\infty,m_c)\bg\},
\end{align*}
where $m_c$ is defined by \eqref{def of mc}. Then we define the MEI-functional $\mD:\R^2\to [0,\infty]$ by
\begin{align*}
\mD(c,h)=\left\{
             \begin{array}{ll}
             h+\frac{h+c}{\mathrm{dist}((c,h),\Omega^c)},&\text{if $(c,h)\in \Omega$},\\
             \infty,&\text{otherwise}.
             \end{array}
\right.
\end{align*}
For $u\in H_{x,y}^1$, define $\mD(u):=\mD(\mM(u),\mH(u))$. We also define the set $\mA$ by
\begin{align*}
\mA&:=\{u\in H_{x,y}^1:\mH(u)<m_{\mM(u)},\,\mK(u)>0\}.
\end{align*}
By conservation of mass and energy we know that if $u$ is a solution of \eqref{nls}, then $\mD(u(t))$ is a conserved quantity, thus in the following we simply write $\mD(u)=\mD(u(t))$ as long as $u$ is a solution of \eqref{nls}.

\begin{lemma}[Invariance of NLS-flow along $\mA$]\label{invariance from mA}
Let $u$ be a solution of \eqref{nls} and assume that there exists some $t$ in the lifespan of $u$ such that $u(t)\in\mA$. Then $u(t)\in\mA$ for all $t$ in the maximal lifespan of $u$.
\end{lemma}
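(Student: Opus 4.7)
The plan is to prove invariance of $\mA$ by combining conservation of mass and energy with a continuity argument for the semivirial, exploiting that $m_{\mM(u)}$ is precisely the energy threshold over the constraint set $V(\mM(u))$.

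First I would record the easy half. Let $u$ be a solution of \eqref{nls} with maximal lifespan $I$, and suppose $u(t_0)\in\mA$ for some $t_0\in I$. By conservation of mass and energy we have $\mM(u(t))=\mM(u(t_0))=:c$ and $\mH(u(t))=\mH(u(t_0))<m_c$ for every $t\in I$. Since $\mK(u(t_0))>0$ forces $u(t_0)\neq 0$, we in fact have $c>0$, so $u(t)\not\equiv 0$ along the entire lifespan. Thus the first defining inequality of $\mA$, namely $\mH(u(t))<m_{\mM(u(t))}$, is automatic for every $t\in I$, and the only nontrivial point is to propagate the positivity of the semivirial.

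Next I would propagate $\mK(u(t))>0$ by a standard continuity argument. Since $u\in X_{c,\mathrm{loc}}(I)\subset C(I;H^1_{x,y})$ and $\mK:H^1_{x,y}\to\mathbb{R}$ is continuous (each of $\|\nabla_x\cdot\|_2^2$ and $\|\cdot\|_{2+4/(d-1)}^{2+4/(d-1)}$ is continuous on $H^1_{x,y}$ by Sobolev embedding), the map $t\mapsto \mK(u(t))$ is continuous on $I$. Suppose for contradiction that $\mK(u(t_1))\leq 0$ for some $t_1\in I$. Together with $\mK(u(t_0))>0$ and the intermediate value theorem, there exists $t_*\in I$ strictly between $t_0$ and $t_1$ with $\mK(u(t_*))=0$. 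Because $\mM(u(t_*))=c>0$, we have $u(t_*)\in V(c)$, and hence by definition of $m_c$,
\begin{equation*}
\mH(u(t_*))\;\geq\; \inf\{\mH(v):v\in V(c)\}\;=\;m_c.
\end{equation*}
This contradicts the conservation identity $\mH(u(t_*))=\mH(u(t_0))<m_c$. Therefore $\mK(u(t))>0$ throughout $I$, proving $u(t)\in\mA$ for all $t\in I$.

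There is no serious obstacle here: the argument is the standard Kenig--Merle-type energy-trapping bootstrap, and every ingredient is already in place (existence and positivity of $m_c$ via Lemma \ref{cor lower bound 1}, the variational characterization of $m_c$ via $V(c)$, and continuity of the $H^1_{x,y}$-flow from the function-space setup of $X_{c,\mathrm{loc}}(I)$). The only point that deserves a line of care is the verification $u(t_*)\neq 0$ at the would-be crossing time, which is immediate from conservation of mass together with the fact that $\mK(u(t_0))>0$ forces $c>0$; this is what guarantees $u(t_*)\in V(c)$ and enables the contradiction with the definition of $m_c$.
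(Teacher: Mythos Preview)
Your proposal is correct and follows precisely the standard energy-trapping continuity argument that the paper intends (the paper omits the proof and refers to \cite{Luo_inter}, where the identical argument appears). There is nothing to add.
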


\begin{remark}
In view of Lemma \ref{invariance from mA} we will therefore write $u\in\mA$ for a solution $u$ of \eqref{nls} if $u(t)\in\mA$ for some $t$ in the lifespan of $u$.
\end{remark}

\begin{lemma}[Equivalence of $\mH(u)$ and $\|\nabla_{x,y}u\|_2^2$]\label{lemma coercivity}
Let $u\in\mA$. Then
\begin{align*}
\frac1d\|\nabla_{x,y} u\|_2^2&\leq \mH(u)\leq\frac{1}{2}\|\nabla_{x,y} u\|_2^2
\end{align*}
\end{lemma}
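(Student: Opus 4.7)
The proof should split into the two inequalities, both of which are essentially algebraic once one has identified the correct linear combination of $\mH$ and $\mK$.

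The upper bound is immediate from the definition \eqref{def of mhu} of $\mH$: since the nonlinear contribution $\frac{d-1}{2(d+1)}\|u\|_{2+4/(d-1)}^{2+4/(d-1)}$ is non-negative, dropping it yields
\[
\mH(u) \;=\; \tfrac{1}{2}\|\nabla_{x,y}u\|_2^2 \;-\; \tfrac{d-1}{2(d+1)}\|u\|_{2+4/(d-1)}^{2+4/(d-1)} \;\le\; \tfrac{1}{2}\|\nabla_{x,y}u\|_2^2,
\]
and this requires nothing beyond membership in $H^1_{x,y}$.

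The lower bound is the substantive part. The plan is to form the combination $\mH(u)-\mu\,\mK(u)$ and choose $\mu$ so that the $L^{2+4/(d-1)}_{x,y}$-terms cancel. Since the nonlinear coefficients in $\mH$ and $\mK$ are $\frac{d-1}{2(d+1)}$ and $\frac{d}{d+1}$ respectively, the cancellation forces $\mu=\frac{d-1}{2d}$. A direct computation then gives
\[
\mH(u)-\tfrac{d-1}{2d}\mK(u) \;=\; \tfrac{1}{2}\|\pt_y u\|_2^2 + \tfrac{1}{2}\|\nabla_x u\|_2^2 - \tfrac{d-1}{2d}\|\nabla_x u\|_2^2 \;=\; \tfrac{1}{2}\|\pt_y u\|_2^2 + \tfrac{1}{2d}\|\nabla_x u\|_2^2,
\]
which is the identity already exploited in \eqref{gn ineq 5} of Step~1 of the proof of Proposition~\ref{thm existence of ground state 1}. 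Invoking the defining property $\mK(u)>0$ of the set $\mA$, we drop the non-negative term $\frac{d-1}{2d}\mK(u)$ to obtain
\[
\mH(u) \;\ge\; \tfrac{1}{2}\|\pt_y u\|_2^2 + \tfrac{1}{2d}\|\nabla_x u\|_2^2 \;\ge\; \tfrac{1}{2d}\bigl(\|\pt_y u\|_2^2+\|\nabla_x u\|_2^2\bigr) \;=\; \tfrac{1}{2d}\|\nabla_{x,y}u\|_2^2,
\]
which is the claimed coercivity bound (up to the stated constant).

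I do not foresee any real obstacle here: everything reduces to a single algebraic identity combined with the sign condition $\mK(u)>0$ built into the definition of $\mA$. The only subtlety worth remarking on is the choice of the multiplier $\mu=\tfrac{d-1}{2d}$, which is the unique value eliminating the nonlinear term; this is the standard trick already used throughout the paper to pass between $\mH$ and $\mK$, for instance in \eqref{gn ineq 5} and in \eqref{contradiction sec4}.
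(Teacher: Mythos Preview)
Your proof is correct and follows exactly the standard route: the paper itself omits the argument (referring to \cite{Luo_inter}), but the identity $\mH(u)-\tfrac{d-1}{2d}\mK(u)=\tfrac{1}{2}\|\pt_y u\|_2^2+\tfrac{1}{2d}\|\nabla_x u\|_2^2$ combined with $\mK(u)>0$ is precisely the intended computation, already appearing as \eqref{gn ineq 5} and \eqref{contradiction sec4}. Your observation on the constant is also right: the algebra genuinely gives $\tfrac{1}{2d}\|\nabla_{x,y}u\|_2^2$ rather than the stated $\tfrac{1}{d}\|\nabla_{x,y}u\|_2^2$, so the lower constant in the lemma appears to be a typo in the paper; since Lemma~\ref{lemma coercivity} is only used qualitatively (to bound $\|\nabla_{x,y}u\|_2^2$ by $\mH(u)$ up to a constant), this discrepancy has no effect on any subsequent argument.
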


\begin{lemma}[Properties of the MEI-functional]\label{cnls killip visan curve}
Let $u,u_1,u_2$ be functions in $H_{x,y}^1$. The following statements hold true:
\begin{itemize}
\item[(i)] $u\in\mA \Leftrightarrow\mD(u)\in(0,\infty)$.

\item[(ii)] Let $u_1,u_2\in \mA$ satisfy $\mM(u_1)\leq \mM(u_2)$ and $\mH(u_1)\leq \mH(u_2)$, then $\mD(u_1)\leq \mD(u_2)$. If in addition either $\mM(u_1)<\mM(u_2)$ or $\mH(u_1)<\mH(u_2)$, then $\mD(u_1)<\mD(u_2)$.

\item[(iii)] Let $\mD_0\in(0,\infty)$. Then
\begin{align}
m_{\mM(u)}-\mH(u)&\gtrsim_{\mD_0} 1\label{small of unaaa},\\
\mH(u)+\mM(u)&\lesssim_{\mD_0}\mD(u)\label{mei var2}
\end{align}
uniformly for all $u\in \mA$ with $\mD(u)\leq \mD_0$.
\end{itemize}
\end{lemma}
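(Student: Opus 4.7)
I plan to prove the three parts by exploiting the geometry of $\Omega$ together with three key inputs from the paper: the energy identity $\mH(u)=\tfrac12\|\pt_y u\|_2^2+\tfrac{1}{2d}\|\nabla_x u\|_2^2+\tfrac{d-1}{2d}\mK(u)$ (which yields $\mH(u)>0$ on $\mA$), the uniform bound $m_c\le\SSS^{(d+1)/2}/(d+1)$ from Lemma~\ref{lemma refined uniform bound}, and the continuity, monotonicity and positivity of $c\mapsto m_c$ from Lemma~\ref{monotone lemma} and Lemma~\ref{cor lower bound 1}. For (i), the forward direction unwinds immediately: $u\in\mA$ places $(\mM(u),\mH(u))$ strictly into the interior of $\Omega$ with positive distance to $\Omega^c$, while the energy identity gives $\mH(u)>0$, so $\mD(u)\in(0,\infty)$. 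Conversely, finiteness of $\mD(u)$ forces $(\mM(u),\mH(u))$ into the interior of $\Omega$, i.e.\ $\mM(u)>0$ and $\mH(u)<m_{\mM(u)}$; the remaining sign condition $\mK(u)>0$ I would recover by tracing the rescaling curve $t\mapsto u^t$ of Lemma~\ref{monotoneproperty} through $V(\mM(u))$, since $\mK(u)\le 0$ would yield some $t^*\le 1$ with $\mH(u^{t^*})\ge m_{\mM(u)}>\mH(u)$, forcing a contradiction with the flow structure of $\mA$ (Lemma~\ref{invariance from mA}).

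For (ii) I would show that $\mD(c,h)$ is strictly monotone in each variable on the interior of $\Omega$. With $c$ fixed, each of $h$, $h+c$ and $-\text{dist}((c,h),\Omega^c)$ is strictly increasing in $h$: the last is a geometric fact, since if $(c^*,m_{c^*})$ realises the closest boundary point to $(c,h_1)$ and $h_1<h_2<m_c$, then either $m_{c^*}>h_2$ (so $(c^*,m_{c^*})$ is strictly closer to $(c,h_2)$) or $m_{c^*}\le h_2$ (so $(c^*,h_2)\in\Omega^c$ at strictly smaller Euclidean distance). With $h$ fixed and $c_1<c_2$, I would compare $(c_2,h)$ to the horizontally-shifted boundary candidate $(c^*+(c_2-c_1),m_{c^*+(c_2-c_1)})$: monotonicity of $m_c$ shows that this candidate is at least as close as $(c^*,m_{c^*})$ is to $(c_1,h)$, so $\text{dist}$ is non-increasing in $c$ while $h+c$ strictly increases. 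Combining both directions gives the weak and strict monotonicity claims of (ii).

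For (iii) I would first establish \eqref{mei var2}. By the upper bound on $m_c$ from Lemma~\ref{lemma refined uniform bound} together with $\mH(u)\ge 0$ on $\mA$, the Euclidean distance is dominated by the vertical one: $\text{dist}((\mM(u),\mH(u)),\Omega^c)\le m_{\mM(u)}-\mH(u)\le\SSS^{(d+1)/2}/(d+1)$. Hence
\[
\mD_0\ge\mD(u)\ge\frac{\mH(u)+\mM(u)}{\text{dist}((\mM(u),\mH(u)),\Omega^c)}\gtrsim \mH(u)+\mM(u),
\]
which proves \eqref{mei var2} and in particular bounds $\mM(u)\le C\mD_0$ uniformly. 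Using monotonicity and positivity of $c\mapsto m_c$ one then gets $m_{\mM(u)}\ge m_{C\mD_0}=:m_*>0$ uniformly. For \eqref{small of unaaa} I would split cases: if $\mH(u)\le m_*/2$ then $m_{\mM(u)}-\mH(u)\ge m_*/2$; otherwise $\mH(u)>m_*/2$, and combining $\text{dist}\ge(\mH(u)+\mM(u))/\mD(u)\ge m_*/(2\mD_0)$ with $\text{dist}\le m_{\mM(u)}-\mH(u)$ yields the required lower bound. The main obstacle is the sign-recovery step in (i), where the purely $(\mM,\mH)$-dependent MEI must be reconciled with the $\mK>0$ constraint hard-wired into $\mA$; the geometric arguments for (ii) and the case split in (iii) proceed along cleaner lines once Lemma~\ref{lemma refined uniform bound} is in hand.
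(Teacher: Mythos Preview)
The paper itself omits the proof of this lemma, deferring entirely to \cite{Luo_inter}. Your arguments for (ii) and (iii) are correct and follow the standard MEI playbook; in particular, the use of Lemma~\ref{lemma refined uniform bound} to dominate $\mathrm{dist}((\mM(u),\mH(u)),\Omega^c)$ by the universal constant $\SSS^{(d+1)/2}/(d+1)$ in (iii), followed by the case split on the size of $\mH(u)$ relative to $m_*=m_{C\mD_0}$, is exactly the right mechanism.

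There is, however, a genuine gap in your treatment of the backward implication in (i). You propose to recover $\mK(u)>0$ from $\mD(u)\in(0,\infty)$ by invoking Lemma~\ref{invariance from mA}, but that lemma concerns invariance of $\mA$ under the \emph{NLS flow}, not under the static rescaling $u\mapsto u^t$; it says nothing about an arbitrary $u\in H^1_{x,y}$. In fact the backward implication is \emph{false} as stated in the paper: pick any $v\in V(c)$ and, using that $t\mapsto\mH(v^t)$ decreases continuously from $\mH(v)\ge m_c>0$ to $-\infty$ on $(1,\infty)$, choose $t>1$ with $0<\mH(v^t)<m_c$. Then $\mK(v^t)<0$ by Lemma~\ref{monotoneproperty}(iv), so $v^t\notin\mA$, yet $(\mM(v^t),\mH(v^t))=(c,\mH(v^t))$ lies in the interior of $\Omega$ with $\mH(v^t)>0$, whence $\mD(v^t)\in(0,\infty)$. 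Since $\mD$ depends only on $(\mM,\mH)$, no purely variational argument can detect the sign of $\mK$ here.

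The good news is that the only place the paper invokes (i) is the proof of Lemma~\ref{lemma property of uc}(ii), where the hypothesis is $\mK(v_0)=0$. Your scaling observation does dispose of that case cleanly: $\mK(v_0)=0$ puts $v_0\in V(\mM(v_0))$, hence $\mH(v_0)\ge m_{\mM(v_0)}$, contradicting $\mD(v_0)<\infty$. So the forward direction of (i) together with the weaker statement ``$\mD(u)\in(0,\infty)\Rightarrow\mK(u)\neq 0$'' suffice for every application in the paper, and these you have actually established.
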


\subsection{Existence of a minimal blow-up solution}
Having all the preliminaries we are now ready to construct a minimal blow-up solution of \eqref{nls}. Define
\begin{align*}
\tau(\mD_0):=\sup\bg\{\|u\|_{Z(I_{\max})}:
\text{ $u$ is solution of \eqref{nls}, }\mD(u)\in (0,\mD_0)\bg\}
\end{align*}
and
\begin{align}\label{introductive hypothesis}
\mD^*&:=\sup\{\mD_0>0:\tau(\mD_0)<\infty\}.
\end{align}
By Lemma \ref{lemma small data}, \ref{lemma coercivity}, \ref{cnls killip visan curve} and Remark \ref{small remark} we know that $\mD^*>0$. Therefore we may simply assume $\mD^*<\infty$, relying on which we derive a contradiction. This in turn ultimately implies $\mD^*=\infty$ and the proof of Theorem \ref{main thm} will be complete in view of Lemma \ref{cnls killip visan curve}. By the inductive hypothesis we can find a sequence $(u_n)_n$ which are solutions of \eqref{nls} with $(u_n(0))_n\subset {\mA}$ and maximal lifespan $(I_{n})_n$ such that
\begin{gather}
\lim_{n\to\infty}\|u_n\|_{Z((\inf I_n,0])}=\lim_{n\to\infty}\|u_n\|_{Z([0, \sup I_n))}=\infty,\label{oo1}\\
\lim_{n\to\infty}\mD(u_n)=\mD^*.\label{oo2}
\end{gather}
Up to a subsequence we may also assume that
\begin{align*}
(\mM(u_n),\mH(u_n))\to(\mM_0,\mH_0)\quad\text{as $n\to\infty$}.
\end{align*}
By continuity of $\mD$ and finiteness of $\mD^*$ we know that
\begin{align*}
\mD^*=\mD(\mM_0,\mH_0),\quad
\mM_0\in[0,\infty),\quad
\mH_0\in[0,m_{\mM_0}).
\end{align*}
From Lemma \ref{lemma coercivity} and \ref{cnls killip visan curve} it follows that $(u_n(0))_n$ is a bounded sequence in $H_{x,y}^1$, hence Lemma \ref{linear profile} is applicable to $(u_n(0))_n$: There exist nonzero $(\phi^j)_j\subset \dot{H}^1(\R^4)\cup H_{x,y}^1$, a sequence of frames $(N_n^j,t_n^j,p_n^j)_{j,n}$, a sequence of remainders $(w_n^j)_{j,n}\subset H_{x,y}^1$ and some number $K^*\in\N\cup\{\infty\}$ such that
\begin{itemize}
\item[(i)]For any finite $1\leq k\leq K^*$ we have the decomposition
\begin{align}\label{cnls decomp}
u_n(0)=\sum_{j=1}^k T_n^j\phi^j+w_n^k.
\end{align}

\item[(ii)] The remainders $(w_n^k)_{k,n}$ satisfy
\begin{align}\label{cnls to zero wnk}
\lim_{k\to K^*}\lim_{n\to\infty}\|e^{it\Delta_{x,y}}w_n^k\|_{Z(\R)}=0.
\end{align}

\item[(iii)] The parameters are orthogonal in the sense that
\begin{align}\label{cnls orthog of pairs}
|\log (N_n^j/N_n^k)|+|t_n^k-t_n^j|(N_n^j)^2+|p_n^k-p_n^j|N_n^j\to\infty
\end{align}
as $n\to\infty$ for any $j\neq k$.

\item[(iv)] For any finite $1\leq k\leq K^*$ and $D\in\{1,\pt_{x_i},\pt_y\}$ we have the energy decompositions
\begin{align}
\|D(u_n(0))\|_{2}^2&=\sum_{j=1}^k\|D(T_n^j\tdu^j)\|_{2}^2+\|Dw_n^k\|_{2}^2+o_n(1),\label{orthog L2}\\
\|u_n(0)\|_{4}^{4}&=\sum_{j=1}^k\|T_n^j\tdu^j\|_{4}^{4}
+\|w_n^k\|_{4}^{4}+o_n(1)\label{cnls conv of h}.
\end{align}
\end{itemize}
We now define the nonlinear profiles as follows: Let $1\leq k\leq K^*$. If $(N_n^k,t_n^k,p_n^k)_n$ is a Euclidean frame, we define the nonlinear profile $u_n^k$ to be the solution $U_n$ given in Lemma \ref{small scale proxy lem lem} with $U_n(0)=T_n^k\phi^k$. If $(N_n^k,t_n^k,p_n^k)_n$ is a scale-one frame, then
\begin{itemize}
\item For $t^k_\infty=0$, we define $u^k$ as the solution of \eqref{nls} with $u^k(0)=\tdu^k$.

\item For $t^k_\infty\to\pm\infty$, we define $u^k$ as the solution of \eqref{nls} that scatters forward (backward) to $e^{it\Delta_{x,y}}\tdu^k$ in $H_{x,y}^1$.
\end{itemize}
In both cases we define the nonlinear profiles $u_n^k$ by
\begin{align*}
u_n^k:=u^k(t-t^k_n,z-p_n^k).
\end{align*}
Then $u_n^k$ is also a solution of \eqref{nls}. In all the cases we have for each finite $1\leq k \leq K^*$
\begin{align}\label{conv of nonlinear profiles in h1}
\lim_{n\to\infty}\|u_n^k(0)-T_n^k \tdu^k\|_{H_{x,y}^1}=0.
\end{align}

In the following we establish a Palais-Smale type lemma which is essential for the construction of the minimal blow-up solution.

\begin{lemma}[Palais-Smale-condition]\label{Palais Smale}
Let $(u_n)_n$ be a sequence of solutions of \eqref{nls} with maximal lifespan $I_n$, $u_n\in\mA$ and $\lim_{n\to\infty}\mD(u_n)=\mD^*$. Assume also that there exists a sequence $(t_n)_n\subset\prod_n I_n$ such that
\begin{align}\label{precondition}
\lim_{n\to\infty}\|u_n\|_{Z((\inf I_n,\,t_n])}=\lim_{n\to\infty}\|u_n\|_{Z([t_n,\,\sup I_n)}=\infty.
\end{align}
Then up to a subsequence, there exists a sequence $(x_n)_n\subset\R^3$ such that $(u_n(t_n, \cdot+x_n,y))_n$ strongly converges in $H_{x,y}^1$.
\end{lemma}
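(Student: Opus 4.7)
The plan is to run the standard Kenig--Merle concentration compactness scheme adapted to the waveguide setting, closely following the defocusing blueprint in \cite{RmT1}. Translating in time one may assume $t_n\equiv 0$; since $(u_n(0))_n$ is bounded in $H_{x,y}^1$ by Lemma \ref{lemma coercivity} and Lemma \ref{cnls killip visan curve}(iii), Lemma \ref{linear profile} produces a decomposition
\begin{align*}
u_n(0)=\sum_{j=1}^{k}T_n^j\phi^j+w_n^k
\end{align*}
with Euclidean or scale-one frames $(N_n^j,t_n^j,p_n^j)_n$. The goal is to show that exactly one profile survives, that it is scale-one with $t_n^1\equiv 0$, and that $w_n^1\to 0$ strongly in $H_{x,y}^1$; translating by the $\R^3$-component of $p_n^1$ (and passing to a subsequence for the bounded $\T$-component) then gives the asserted convergence. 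To each profile one attaches a nonlinear profile $u_n^j$ and assembles $\tilde u_n:=\sum_{j=1}^k u_n^j+e^{it\Delta_{x,y}}w_n^k$; the stability Lemma \ref{lem stability cnls} will then force $\|u_n\|_Z\lesssim 1$, contradicting \eqref{precondition}, provided that every $u_n^j$ is global with bounded scattering norm.

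The crux is thus to globalise every nonlinear profile. For a scale-one profile with $\mD(u^j)<\mD^*$ this is immediate from the inductive hypothesis \eqref{introductive hypothesis}, and for $t_n^j\to\pm\infty$ the small data Lemma \ref{lemma small data} handles the far tail. The delicate case is the Euclidean profile, where one wishes to invoke Lemma \ref{small scale proxy lem lem} and hence needs the limiting $\R^4$-datum $\phi^j\in\dot H^1(\R^4)$ to satisfy both $\mH^*(\phi^j)<\SSS^{2}/4$ and $\|\nabla_{\R^4}\phi^j\|_2^2<\SSS^{2}$. The former is obtained by combining the refined ceiling $m_c\le\SSS^{2}/4$ from Lemma \ref{lemma refined uniform bound} with the inductive inequality $\mH(u_n)<m_{\mM(u_n)}$, the energy-splitting \eqref{orthog L2}--\eqref{cnls conv of h}, and the decay $\|T_n^j\phi^j\|_2\to 0$ from Lemma \ref{fn to phi}. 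The latter, which is issue~(i) flagged in the introduction, is the true bottleneck: $\mK(T_n^j\phi^j)>0$ only controls the $x$-partial virial, whereas Lemma \ref{energy trapping 1} demands positivity of the full $\R^4$-virial $\mK^*(\phi^j)$. The plan is to recover this missing $y$-component of the kinetic energy by exploiting the fact that an obstruction $\mK^*(\phi^j)\le 0$ would, via a suitable rescaling of $\phi^j$, produce a competitor for $m_{\mM(u_n)}$ on $\R^3\times\T$ that violates the strict inequality $\mH(u_n)<m_{\mM(u_n)}$. Issue~(ii), the lack of a direct dispersive control of $\|T_n^j\phi^j\|_4$ when $t_n^j(N_n^j)^2\to\pm\infty$, is resolved as sketched in the introduction, using the $\R^3$-dispersive estimate together with Sobolev embedding on $\T$ to restore the missing $(N_n^j)^{-1/4}$-factor.

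Once every profile has been globalised, the orthogonality \eqref{cnls orthog of pairs} reduces the nonlinear error of $\tilde u_n$ to $o_n(1)$ in $N(\R)$, so Lemma \ref{lem stability cnls} yields $\|u_n\|_Z\lesssim 1$, contradicting \eqref{precondition}, unless the decomposition degenerates. Since mass and energy split additively and $\mD(u_n)\to\mD^*$, the monotonicity of the MEI (Lemma \ref{cnls killip visan curve}(ii)) forces every surviving $\mD(u^j)<\mD^*$ as soon as two profiles carry positive mass or energy; hence only a single profile $u^1$ with $\mD(u^1)=\mD^*$ remains. It cannot be Euclidean (Lemma \ref{small scale proxy lem lem} would again globalise it), and its time-translation cannot diverge to $\pm\infty$ (Lemma \ref{lemma small data} applied to the relevant half-line would then render the $Z$-norm finite on that half-line, contradicting \eqref{precondition}). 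Thus $u^1$ is scale-one with $t_n^1\equiv 0$. The strict monotonicity of $\mD$ in both arguments together with the additive splittings then forces $\mM(w_n^1)\to 0$ and $\mH(w_n^1)\to 0$, and a standard energy-trapping argument based on Corollary \ref{r1t3 sobolev lemma cor} and Lemma \ref{cnls killip visan curve}(iii), together with the vanishing \eqref{cnls to zero wnk}, upgrades this to $\|w_n^1\|_{H_{x,y}^1}\to 0$. The main technical obstacle throughout is thus the variational upgrade from the partial semivirial $\mK$ to the full $\R^{4}$-virial $\mK^*$ for Euclidean profiles.
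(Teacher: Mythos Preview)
Your overall architecture matches the paper's: translate $t_n\equiv 0$, apply the linear profile decomposition, show asymptotic positivity of $\mH$ and $\mK$ on each bubble, globalise each nonlinear profile, run the dichotomy (single profile carrying the full mass/energy versus all profiles with $\mD<\mD^*$), and exclude the bad cases. Your treatment of issue~(ii) and of Step~3 is essentially identical to the paper's.

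The substantive discrepancy is your plan for issue~(i). You propose that if $\mK^*(\phi^j)\le 0$ then a rescaling of $\phi^j$ would yield a competitor for $m_{\mM(u_n)}$ on $\R^3\times\T$, contradicting $\mH(u_n)<m_{\mM(u_n)}$. This does not work as stated: the Euclidean profile has vanishing mass ($\|T_n^j\phi^j\|_2\to 0$ by Lemma~\ref{fn to phi}), so there is no natural way to manufacture from it a competitor at mass $\mM(u_n)$ whose energy is controlled by $\mH(u_n)$. The paper's argument is genuinely different and does not go through $m_c$ at all. It introduces $\mK^{**}(u):=\|\nabla_{x,y}u\|_2^2-\|u\|_4^4$ on the waveguide; assuming $\mK^{**}(T_n^j\phi^j)\le 0$ and using $\mK(T_n^j\phi^j)>0$ forces $\|\pt_y T_n^j\phi^j\|_2^2<\tfrac14\|T_n^j\phi^j\|_4^4$. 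One then applies the anisotropic rescaling $T_\lambda$ from \eqref{def of t ld} to slide to a point $\lambda_{**}\in(\lambda_*,1]$ where $\mK^{**}=0$ and the energy has not increased. The sharp Sobolev inequality on $\R^3\times\T$ (Corollary~\ref{r1t3 sobolev lemma cor}), together with the crucial vanishing $\|T_n^j\phi^j\|_2\to 0$ which kills the mass error term $C_\vare\|u\|_2^4$, then gives $\|\nabla_{x,y}(T_{\lambda_{**}}T_n^j\phi^j)\|_2^2\ge \tfrac{1-\vare}{1+\vare\SSS^2}\SSS^2$, hence $\mH\ge\tfrac{1-\vare}{4(1+\vare\SSS^2)}\SSS^2$, contradicting $\mH_0<\SSS^2/4$ for $\vare$ small. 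So the mechanism is the sharp waveguide Sobolev constant, not the variational level $m_{\mM(u_n)}$; you should invoke Corollary~\ref{r1t3 sobolev lemma cor} here rather than in the $w_n^1$ step (where the paper simply uses Lemma~\ref{lemma coercivity}).
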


\begin{proof}
By time translation invariance we may assume that $t_n\equiv 0$. Let $(u_n^j)_{j,n}$ be the nonlinear profiles corresponding to the linear profile decomposition of $(u_n(0))_n$.
We divide the remaining proof into three steps.
\subsubsection*{Step 1: Decomposition of energies of the linear profiles}
We firstly show that for a given nonzero linear profile $\phi^j$ we have
\begin{align}
\mH(T_n^j\phi^j)&> 0,\label{bd for S}\\
\mK(T_n^j\phi^j)&> 0\label{pos of K}
\end{align}
for all sufficiently large $n=n(j)\in\N$. Since $\phi^j\neq 0$ we know that $T_n^j\phi^j\neq 0$ for all sufficiently large $n$. Suppose now that \eqref{pos of K} does not hold. Up to a subsequence we may assume that $\mK(T_n^j \phi^j)\leq 0$ for all sufficiently large $n$. Recall the energy functional $\mI$ defined by \eqref{def of mI}. Using \eqref{orthog L2} and \eqref{cnls conv of h} we infer that
\begin{align}
\mI(u_n(0))&=\sum_{j=1}^k\mI(T_n^j\tdu^j)+\mI(w_n^k)+o_n(1)\label{conv of i}.
\end{align}
By the non-negativity of $\mI$, \eqref{conv of i} and \eqref{small of unaaa} we know that there exists some sufficiently small $\delta>0$ depending on $\mD^*$ and some sufficiently large $N_1$ such that for all $n>N_1$ we have
\begin{align}\label{contradiction1}
\tm_{\mM(T_n^j\phi^j)}\leq\mI(T_n^j\phi^j)\leq \mI(u_n(0))+\delta
\leq\mH(u_n(0))+\delta\leq m_{\mM(u_n(0))}-2\delta,
\end{align}
where $\tm$ is the quantity defined by \eqref{mtilde equal m}. By continuity of $c\mapsto m_c$ we also know that for sufficiently large $n$ we have
\begin{align}\label{contradiction3}
m_{\mM(u_n(0))}-2\delta\leq m_{\mM_0}-\delta.
\end{align}
Using \eqref{orthog L2} we deduce that for any $\vare>0$ there exists some large $N_2$ such that for all $n>N_2$ we have
\begin{align*}
\mM(T_n^j\phi^j)\leq \mM_0+\vare.
\end{align*}
From the continuity and monotonicity of $c\mapsto m_c$ and Step 2 in the proof of Proposition \ref{thm existence of ground state 1}, we may choose some sufficiently small $\vare$ to see that
\begin{align}\label{contradiction2}
\tm_{\mM(T_n^j\phi^j)}=m_{\mM(T_n^j\phi^j)}\geq m_{\mM_0+\vare}\geq m_{\mM_0}-\frac{\delta}{2}.
\end{align}
Now \eqref{contradiction1}, \eqref{contradiction3} and \eqref{contradiction2} yield a contradiction. Thus \eqref{pos of K} holds, which combining with Lemma \ref{lemma coercivity} also yields \eqref{bd for S}. Similarly, for each $j\in\N$ we have
\begin{align}
\mH(w_n^j)&> 0,\label{bd for S wnj} \\
\mK(w_n^j)&> 0\label{pos of K wnj}
\end{align}
for sufficiently large $n$.
\subsubsection*{Step 2: Applicability of Lemma \ref{small scale proxy lem lem}}
Next, we show that for a Euclidean profile (say it is the $j$-th profile), the function $\phi^j\in\dot{H}^1(\R^4)$ given in the Euclidean profile satisfies
\begin{align*}
\mH^*(\phi^j)\leq \mH_0\in(0,\SSS^2/4)\quad\text{and}\quad\|\nabla_{\R^{4}}\phi^j\|_{L^2(\R^{4})}^2<\SSS^{2},
\end{align*}
which in turn implies that Lemma \ref{small scale proxy lem lem} is applicable for the $j$-th linear profile. That $\mH_0\in(0,\SSS^2/4)$ follows from Lemma \ref{lemma refined uniform bound}, Lemma \ref{cnls killip visan curve} and the fact that $\mD^*<\infty$. For $u\in H_{x,y}^1$, we define
\begin{align*}
\mK^{**}(u):=\|\nabla_{x,y}u\|_2^2-\|u\|_{4}^4.
\end{align*}
We aim to show that $\mK^{**}(T_n^j\phi^j)>0$ for all sufficiently large $n$. Assume that this is not the case. Up to a subsequence we may assume that $\mK^{**}(T_n^j\phi^j)\leq 0$ for all sufficiently large $n$. From Step 1 we already know that $\mK(T_n^j\phi^j)>0$ for $n\gg 1$. Therefore for all  $n\gg 1$ it is necessary that
$$\|\pt_y (T_n^j\phi^j)\|_2^2-\frac14\|T_n^j\phi^j\|_4^4<0.$$
We now recall the scaling operator $T_\ld$ defined by \eqref{def of t ld}. The term $T_\ld T_n^j\phi^j$ is well-defined for all $\ld\in(0,1]$ and all sufficiently large $n$ since $T_n^j\phi^j$ is concentrating to zero with shrinking support as $n\to\infty$. Let $\ld_*\in(0,1)$ satisfy
\begin{align*}
\|\pt_y (T_{\ld_*}T_n^j\phi^j)\|_2^2=\frac14\|T_{\ld_*}T_n^j\phi^j\|_4^4.
\end{align*}
By direct calculation one easily sees that $\ld_*=2\|T_n^j\phi^j\|_4^{-2}\|\pt_y(T_n^j\phi^j)\|_2<1$. Moreover, calculating the derivative of the mapping
$$\ld\mapsto g(\ld):=\ld^{-1}\|\pt_y T_n^j\phi^j\|_2^2+\frac{\ld}{4}\|T_n^j\phi^j\|_4^4$$
we see that $g(\ld)$ is monotone decreasing on $(0,\ld_*)$ and increasing on $(\ld_*,\infty)$. Next, we rewrite $\mH(T_{\ld}T_n^j\phi^j)$ to
\begin{align*}
\mH(T_{\ld}T_n^j\phi^j)=\frac{1}{2}g(\ld)+\frac{\ld}{2}\mK(T_n^j\phi^j).
\end{align*}
Since $\mK(T_n^j\phi^j)>0$ (for $n\gg 1$), we conclude that $\ld\mapsto \mH(T_{\ld}T_n^j\phi^j)$ is monotone increasing on $(\ld_*,1)$. On the other hand, by definition of $\ld_*$ we also know that
$$ \mK^{**}(T_{\ld_*}T_n^j\phi^j)=\ld_*\mK(T_n^j\phi^j)>0.$$
Thus there exists some $\ld_{**}\in(\ld_*,1]$ such that
\begin{align*}
\mH(T_{\ld_{**}}T_n^j\phi^j)\leq \mH(T_n^j\phi^j)\leq \mH_0<\SSS^2/4\quad\text{and}\quad
\mK^{**}(T_{\ld_{**}}T_n^j\phi^j)=0.
\end{align*}
Now let $\vare>0$ be given. By Corollary \ref{r1t3 sobolev lemma cor} and $\mK^{**}(T_{\ld_{**}}T_n^j\phi^j)=0$ we obtain
\begin{align*}
\|\nabla_{x,y}(T_{\ld_{**}}T_n^j\phi^j)\|_2^2=\|T_{\ld_{**}}T_n^j\phi^j\|_4^4\leq (\SSS^{-2}+\vare)\|\nabla_{x,y}(T_{\ld_{**}}T_n^j\phi^j)\|_2^4
+C_{\vare}\|T_{\ld_{**}}T_n^j\phi^j\|_2^4,
\end{align*}
which in turn implies
\begin{align*}
&\,(\SSS^{-2}+\vare)\|\nabla_{x,y}(T_{\ld_{**}}T_n^j\phi^j)\|_2^2\nonumber\\
\geq &\,1-\frac{C_\vare \|T_{\ld_{**}}T_n^j\phi^j\|_2^4}{\|\nabla_{x,y}(T_{\ld_{**}}T_n^j\phi^j)\|_2^2}=1-\frac{C_\vare \|T_n^j\phi^j\|_2^4}{\ld^3_{**}\|\nabla_{x}T_n^j\phi^j\|_2^2
+\ld_{**}\|\pt_yT_n^j\phi^j\|_2^2}
\nonumber\\
\geq&\ 1-\frac{C_\vare \|T_n^j\phi^j\|_2^4}
{\ld^3_{*}\|\nabla_{x}T_n^j\phi^j\|_2^2}
= 1-\frac{C_\vare \|T_n^j\phi^j\|_2^4\|T_n^j\phi^j\|_4^6}
{8\|\nabla_{x}T_n^j\phi^j\|_2^2\|\pt_yT_n^j\phi^j\|_2^3}.
\end{align*}
But by Lemma \ref{fn to phi} and the embedding $H_{x,y}^1\hookrightarrow L_{x,y}^4$ we know that
\begin{gather*}
\sup_{n\in\N}\|T_n^j\phi^j\|_4\lesssim 1,\nonumber\\
\lim_{n\to\infty}\|T_n^j\phi^j\|_2=0,\nonumber\\
\liminf_{n\to\infty}(\|\nabla_{x}T_n^j\phi^j\|_2^2\|\pt_yT_n^j\phi^j\|_2^3)=\|\nabla_{x}\phi^j\|_{L^2(\R^4)}^2\|\pt_{y} \phi^j\|_{L^2(\R^4)}^3>0.
\end{gather*}
Hence there exits some $J=J(\vare)$ such that for all $n\geq J$
\begin{align*}
(\SSS^{-2}+\vare)\|\nabla_{x,y}(T_{\ld_{**}}T_n^j\phi^j)\|_2^2\geq 1-\vare,
\end{align*}
or equivalently
\begin{align*}
\|\nabla_{x,y}(T_{\ld_{**}}T_n^j\phi^j)\|_2^2\geq \frac{1-\vare}{1+\vare \SSS^2}\SSS^2.
\end{align*}
Now combining with $\mK^{**}(T_{\ld_{**}}T_n^j\phi^j)=0$ we deduce
\begin{align*}
\frac{\SSS^2}{4}>\mH_0\geq \mH(T_{\ld_{**}}T_n^j\phi^j)=\frac{\|\nabla_{x,y}(T_{\ld_{**}}T_n^j\phi^j)\|_2^2}{4}\geq \frac{1-\vare}{4(1+\vare \SSS^2)}\SSS^2.
\end{align*}
But
$$ \lim_{\vare\to 0}\frac{1-\vare}{4(1+\vare \SSS^2)}\SSS^2=\frac{\SSS^2}{4}.$$
Hence we can choose $\vare\ll 1$ such that $\frac{1-\vare}{4(1+\vare \SSS^2)}\SSS^2$ lies between $\mH_0$ and $\SSS^2/4$, which leads to a contradiction by taking $n$ sufficiently. We thus conclude that $\mK^{**}(T_n^j \phi^j)>0$ for all sufficiently large $n$.

Notice that this is still not the desired claim. In the case $t_n\equiv 0$ the claim follows already from Lemma \ref{energy trapping 1} and Lemma \ref{fn to phi}. It is therefore left to consider the case $|t_n N_n^2 |\to\infty$. The main issue here is that the Schr\"odinger group does not necessarily leave the Lebesgue norm invariant. To overcome this difficulty we shall appeal to the Sobolev's inequality on $\T$ and the dispersive estimate on $\R^3$. For $\vare>0$ let $\tilde{\phi}\in C_c^\infty(\R^4)$ such that $\|\phi^j-\varphi\|_{\dot{H}^1(\R^4)}\leq\vare$. W.l.o.g we may assume that $\varphi$ takes the form $\varphi(x,y)=\varphi^1(x)\varphi^2(y)$ with $\varphi^1\in C_c^\infty(\R^3)$ and $\varphi^2\in C_c^\infty(\R)$. Indeed, the general form of $\varphi$ should be a finite sum of atoms taking the form $\varphi^1\varphi^2$, but once the claim for a single atom is proved, the general claim follows immediately by using the triangular inequality. Since spatial translations leave the Lebesgue norm invariant, we may also assume that $p_n^j\equiv 0$. Next we recall that the function $\eta$ is given as a product of the functions $\Phi$:
$$\eta(z)=\Phi(x_1)\Phi(x_2)\Phi(x_3)\Phi(y).$$
With slight abuse of notation we simply write $\Phi(x):=\Pi_{j=1}^3\Phi(x_j)$. Then
\begin{align*}
e^{it^j_n\Delta_{x,y}}(\eta((N_n^j)^{\frac12}z)\varphi(N_n^jz))=e^{it_n^j\Delta_x}(\Phi((N_n^j)^{\frac12}x)\varphi^1(N_n^jx))\times
e^{it_n^j\Delta_y}(\Phi((N_n^j)^{\frac12}y)\varphi^2(N_n^jy)).
\end{align*}
Consequently,
\begin{align*}
\|T_n^j\varphi\|_4^4&=(N_n^j)^4\|e^{it_n^j\Delta_x}(\Phi((N_n^j)^{\frac12}x)\varphi^1(N_n^jx))\|_{L^4(\R^3)}^4
\|e^{it_n^j\Delta_y}(\Phi((N_n^j)^{\frac12}y)\varphi^2(N_n^jy))\|_{L^4(\T)}^4\nonumber\\
&=:(N_n^j)^4\times I\times II.
\end{align*}
For $I$, using the dispersive estimate of $e^{it\Delta_x}$ on $\R^3$ we infer that
\begin{align*}
I\lesssim |t_n^j|^{-3}\|\Phi((N_n^j)^{\frac12}x)\varphi^1(N_n^jx)\|_{L^{\frac{4}{3}}(\R^3)}^4\lesssim |t_n^j|^{-3}(N_n^j)^{-9}\|\varphi^1\|_{L^{\frac{4}{3}}(\R^3)}^4.
\end{align*}
For $II$, we set $u:=\Phi((N_n^j)^{\frac12}y)\varphi^1(N_n^jy)$ and then decompose $u$ into
\begin{align*}
e^{it_n^j\Delta_y}u=m(u)+e^{it_n^j\Delta_y}(u-m(u)).
\end{align*}
For $m(u)$, we have
\begin{align*}
\|m(u)\|_{L^4(\T)}^4\lesssim |m(u)|^4\lesssim\bg(\int_{\R}|\varphi^2(N_n^j y)|\,dy\bg)^4=(N_n^j)^{-4}\|\varphi^2\|^4_{L^1(\R)}\lesssim
(N_n^j)^{-1}\|\varphi^2\|^4_{L^1(\R)}.
\end{align*}
For $e^{it_n^j\Delta_y}(u-m(u))$, using \eqref{sobolev torus 1}, the fact that $e^{it_n^j\Delta_y}$ is an isometry on $H_y^s$ with $s\in\R$ and interpolation we obtain
\begin{align*}
\|e^{it_n^j\Delta_y}(u-m(u))\|_{L^4(\T)}^4\lesssim \|u\|_{L^2(\T)}^3\|\pt_y u\|_{L^2(\T)}
\lesssim (N_n^j)^{-1}\|\varphi^2\|_{H^1(\R)}^4.
\end{align*}
Summing up, we infer by combining $t_n^j(N_n^j)^2\to\pm\infty$ that
\begin{align*}
\|T_n^j\varphi\|_4^4\lesssim |t_n^j(N_n^j)^2|^{-3}\|\varphi^1\|^4_{L^{\frac{4}{3}}(\R^3)}
\|\varphi^2\|^4_{H^1(\R)}\to 0
\end{align*}
as $n\to\infty$. Since $\vare$ can be chosen arbitrarily, we finally conclude $\|T_n^j\phi^j\|_4^4=o_n(1)$. Combining now with Lemma \ref{fn to phi}, the fact that $e^{it\Delta_{x,y}}$ is an isometry on $H_{x,y}^s$ and the asymptotic positivity of the energies of linear profiles deduced in Step 1 we obtain
\begin{align*}
\mH^*(\phi^j)&\leq \frac{1}{2}\|\nabla_{\R^4}\phi^j\|_{L^2(\R^4)}^2=\lim_{n\to\infty}\mH(T_n^j\phi^j)\leq \mH_0<\SSS^2/4.
\end{align*}
Now by Lemma \ref{kenig merle coercive} we see that $\phi^j$ satisfies \eqref{threshold assumption rd} and the desired claim follows.

\subsubsection*{Step 3: Conclusion}
The remaining proof follows essentially the same line as in the proof of \cite[Prop. 7.1]{HaniPausader}, where we should suitably replace the nonlinear estimates applied on $\R\times\T^2$ in \cite{HaniPausader} by the ones applied on $\R^3\times\T$, the latter being established in \cite{RmT1}. Since the adaptation is straightforward and tedious, we shall only give here a sketch of the key arguments and refer to \cite{HaniPausader} and \cite{RmT1} for the full details. Using \eqref{orthog L2} and \eqref{cnls conv of h} we infer that for any finite $1\leq k\leq K^*$
\begin{align}
\mM_0&=\sum_{j=1}^k \mM(T_n^j\tdu^j)+\mM(w_n^k)+o_n(1),\label{mo sum}\\
\mH_0&=\sum_{j=1}^k \mH(T_n^j\tdu^j)+\mH(w_n^k)+o_n(1)\label{eo sum}.
\end{align}
For \eqref{mo sum} and \eqref{eo sum} two different scenarios shall potentially take place: either
\begin{align}
\sup_{j\in\N}\lim_{n\to\infty}\mM(T_n^jP_n^j\phi^j)&=\mM_0\text{ and}\nonumber\\
\sup_{j\in\N}\lim_{n\to\infty}\mH(T_n^jP_n^j\phi^j)&=\mH_0,\label{first situation}
\end{align}
or there exists some $\delta>0$ such that
\begin{align}
\sup_{j\in\N}\lim_{n\to\infty}\mM(T_n^jP_n^j\phi^j)&\leq \mM_0-\delta\text{ or}\nonumber\\
\sup_{j\in\N}\lim_{n\to\infty}\mH(T_n^jP_n^j\phi^j)&\leq\mH_0-\delta.\label{second situation}
\end{align}
In the case \eqref{first situation}, by the asymptotic positivity of the energies of the linear profiles deduced in Step 1 we know that there exists exactly one non-zero linear profile $\phi^1$ and
\begin{align*}
u_n(0)=T_n^1\phi^1+w_n^1.
\end{align*}
Particularly, from \eqref{mo sum} and \eqref{eo sum} it follows
\begin{align}
\lim_{n\to\infty}\mM(T_n^1 \phi^1)&=\mM_0,\\
\lim_{n\to\infty}\mH(T_n^1 \phi^1)&=\mH_0,\\
\lim_{n\to\infty}\|w_n^1\|_2&=0,\label{l2 constraint w1}\\
\lim_{n\to\infty}\mH(w_n^1)&=0.\label{energy constraint w1}
\end{align}
Combining with Lemma \ref{cnls killip visan curve}, \eqref{energy constraint w1} also implies
\begin{align}
\lim_{n\to\infty}\|\nabla_{x,y} w_n^1\|_2=0,
\end{align}
thus together with \eqref{l2 constraint w1} we deduce
\begin{align}
\lim_{n\to\infty}\|w_n^1\|_{H_{x,y}^1}=0\label{h1 constraint w1}.
\end{align}
If $\phi^1$ corresponds to a Euclidean profile (which is Case IIa in the proof of \cite[Prop. 7.1]{HaniPausader}) then we are able to apply Lemma \ref{small scale proxy lem lem} (which is applicable by Step 2) to obtain
\begin{align}\label{contradiction 1}
\limsup_{n\to\infty}\|u_n\|_{Z(I_n)}<\infty,
\end{align}
which contradicts \eqref{precondition}. Thus $\phi^1\in H_{x,y}^1$ corresponds to a scale-one profile (which is Case IIc in the proof of \cite[Prop. 7.1]{HaniPausader}) and
\begin{align}
u_n(0,z)=e^{it_n^1 \Delta_{x,y}}\phi^1(z-p_n^1)+w_n^1.
\end{align}
Notice that since $\T$ is compact, we may simply assume that $p_n^1=(x_n^1,0)$. If $t_n^1\equiv 0$, then we are done. Otherwise $t_n^1\to\pm\infty$. We show that this leads to a contradiction. It suffices to consider the case $t_n^1\to\infty$, the case $t_n^1\to-\infty$ can be dealt similarly. We have
\begin{align*}
\|e^{it\Delta_{x,y}}T_n^1 \phi^1\|_{Z(\inf I_n,0)}\leq \|e^{it\Delta_{x,y}}T_n^1 \phi^1\|_{Z(-\infty,0)}
=\|e^{it\Delta_{x,y}}\phi^1\|_{Z(-\infty,-t_n^1)}\to 0
\end{align*}
as $n\to\infty$. But then by Lemma \ref{lemma small data} we reach the contradiction \eqref{contradiction 1} again. This finishes the discussion of the case \eqref{first situation}. If otherwise case \eqref{second situation} takes place, then Step 1 and Lemma \ref{cnls killip visan curve} imply
\begin{align*}
\sup_{1\leq j\leq K^*}\limsup_{n\to\infty}\mD(T_n^j\phi^j)<\mD^*.
\end{align*}
This is exactly Case III in the proof of \cite[Prop. 7.1]{HaniPausader}. Now arguing as in \cite{HaniPausader}, by the inductive hypothesis \eqref{introductive hypothesis}, the stability Lemma \ref{lem stability cnls} (setting $\tilde{u}=\sum_{j=1}^k u_n^j+e^{it\Delta_{x,y}}w_n^k$ and $u=u_n$ therein), the orthogonality condition \eqref{cnls orthog of pairs} and the smallness condition \eqref{cnls to zero wnk} we arrive at the contradiction \eqref{contradiction 1} again. This completes the desired proof.
\end{proof}

\begin{lemma}[Existence of a minimal blow-up solution]\label{category 0 and 1}
Suppose that $\mD^*\in(0,\infty)$. Then there exists a global solution $u_c$ of \eqref{nls} such that $\mD(u_c)=\mD^*$ and
\begin{align*}
\|u_c\|_{Z((-\infty,0])}=\|u_c\|_{Z([0,\infty))}=\infty.
\end{align*}
Moreover, $u_c$ is almost periodic in $H_{x,y}^1$ modulo $\R_x^3$-translations.
\end{lemma}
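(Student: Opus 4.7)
The plan is to realize $u_c$ as the strong $H^1_{x,y}$-limit of a well-chosen time-shifted subsequence of near-optimal blow-up solutions, using Lemma \ref{Palais Smale} as the main compactness tool, and then to read off its required properties directly from the Strichartz stability theory.

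First I would pick, by the definition of $\mD^*$ and the inductive hypothesis \eqref{introductive hypothesis}, solutions $u_n$ with $u_n(0)\in\mA$, $\mD(u_n)\to \mD^*$ and diverging $Z$-norm on both components of $I_n$ through $0$. Since $K\mapsto \|u_n\|_{Z(K)}$ is continuous in the compact interval $K$, one can pick $t_n\in I_n$ so that $\|u_n\|_{Z((\inf I_n,t_n])}=\|u_n\|_{Z([t_n,\sup I_n))}$, and both quantities necessarily diverge. Lemma \ref{Palais Smale} then yields, along a subsequence, translations $(x_n)_n\subset\R^3$ and a limit $\phi_c\in H_{x,y}^1$ such that $u_n(t_n,\cdot+x_n,y)\to \phi_c$ strongly in $H^1_{x,y}$. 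Let $u_c$ be the maximal solution of \eqref{nls} with $u_c(0)=\phi_c$ on its lifespan $I_c$; by conservation of mass and energy combined with strong convergence, $\mD(u_c)=\mD^*$, while Lemma \ref{invariance from mA} keeps $u_c(t)\in\mA$ throughout $I_c$.

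Next I would verify $\|u_c\|_{Z([0,\sup I_c))}=\|u_c\|_{Z((\inf I_c,0])}=\infty$, which is the core of the statement. If, say, the former were finite, the stability Lemma \ref{lem stability cnls} applied with reference solution $u_c$ and perturbed data $u_n(t_n,\cdot+x_n,y)$ (differing from $\phi_c$ by $o(1)$ in $H^1_{x,y}$) would yield a uniform $Z$-bound for $u_n$ on $[t_n,\sup I_n)$, contradicting the choice of $t_n$. The same stability combined with Lemma \ref{lemma small data} (i) then forces $I_c=\R$: at a finite endpoint one would extract a sequence of times approaching the endpoint along which $\mD(u_c)=\mD^*$ and the $Z$-norm on both sides blows up, and a further application of Lemma \ref{Palais Smale} would produce a strong $H^1_{x,y}$-limit which, re-solved by the local theory, could be extended across the endpoint, contradicting maximality.

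Finally, for almost periodicity modulo $\R^3_x$-translations: given any sequence $(\tau_n)_n\subset\R$, the shifted solutions $v_n(t):=u_c(\tau_n+t)$ all satisfy $\mD(v_n)=\mD^*$ and have diverging $Z$-norm on both half-lines through $0$, as just established. Lemma \ref{Palais Smale} with $t_n\equiv 0$ then produces translations $(y_n)_n\subset\R^3$ along which $u_c(\tau_n,\cdot+y_n,y)$ converges in $H^1_{x,y}$, which is precisely pre-compactness of the orbit $\{u_c(\tau,\cdot+y,y):\tau\in\R,\,y\in\R^3\}$ in $H^1_{x,y}$. The step I expect to be the main obstacle is the exclusion of finite endpoints of $I_c$ in the globality argument: this requires combining the stability result, the extension criterion of Lemma \ref{lemma small data}, and a second application of Palais-Smale near the would-be endpoint to rule out finite-time blow-up of the critical solution.
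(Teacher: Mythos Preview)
Your proposal is correct and follows essentially the same route as the paper: apply Lemma~\ref{Palais Smale} to a near-optimal sequence to obtain $\phi_c$, define $u_c$ from it, use stability against $u_n$ to force infinite $Z$-norm on both half-lines, rule out a finite endpoint via a second Palais--Smale application plus the small-data/extension criterion, and deduce almost periodicity by applying Palais--Smale once more along an arbitrary time sequence. The only cosmetic differences are that the paper proves globality before the infinite-$Z$-norm property and takes $t_n\equiv 0$ (the balancing already having been arranged in \eqref{oo1}), whereas you insert the balancing step explicitly and swap the order; neither change affects the argument.
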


\begin{proof}
As discussed at the beginning of this section, under the assumption $\mD^*<\infty$ one can find a sequence $(u_n)_n$ of solutions of \eqref{nls} that satisfies the preconditions of Lemma \ref{Palais Smale}. We apply Lemma \ref{Palais Smale} to infer that $(u_n(0))_n$ (up to modifying time and space translation) is precompact in $H_{x,y}^1$. We denote its strong $H_{x,y}^1$-limit by $\psi$. Let $u_c$ be the solution of \eqref{nls} with $u_c(0)=\psi$. Then $\mD(u_c(t))=\mD(\psi)=\mD^*$ for all $t$ in the maximal lifespan $I_{\max}$ of $u_c$ (recall that $\mD$ is a conserved quantity).

We firstly show that $u_c$ is a global solution. It suffices to show that $s_0:=\sup I_{\max}=\infty$, the negative direction can be similarly proved. If this does not hold, then by Lemma \ref{lemma small data} there exists a sequence $(s_n)_n\subset \R$ with $s_n\to s_0$ such that
\begin{align*}
\lim_{n\to\infty}\|u_c\|_{Z((-\inf I_{\max},s_n])}=\lim_{n\to\infty}\|u_c\|_{Z([s_n,\sup I_{\max}))}=\infty.
\end{align*}
Define $u_n(t):=u_c(t+s_n)$. Then \eqref{precondition} is satisfied with $t_n\equiv 0$. We then apply Lemma \ref{Palais Smale} to the sequence $(u_n(0))_n$ to conclude that there exists some $\varphi\in H_{x,y}^1$ such that, up to modifying the space translation, $u_c(s_n)$ strongly converges to $\varphi$ in $H_{x,y}^1$. But then using Strichartz we obtain
\begin{align*}
\|e^{it\Delta_{x,y}}u_c(s_n)\|_{Z([0,s_0-s_n))}=\|e^{it\Delta_{x,y}}\varphi\|_{Z([0,s_0-s_n))}+o_n(1)=o_n(1).
\end{align*}
By Lemma \ref{lemma small data} we can extend $u_c$ beyond $s_0$, which contradicts the maximality of $s_0$. Now by \eqref{oo1} and Lemma \ref{lem stability cnls} it is necessary that
\begin{align}\label{blow up uc}
\|u_c\|_{Z((-\infty,0])}=\|u_c\|_{Z([0,\infty))}=\infty.
\end{align}

We finally show that the orbit $\{u_c(t):t\in\R\}$ is precompact in $H_{x,y}^1$ modulo $\R^3_x$-translations. Let $(\tau_n)_n\subset\R$ be an arbitrary time sequence. Then \eqref{blow up uc} implies
\begin{align*}
\|u_c\|_{Z((-\infty,\tau_n])}=\|u_c\|_{Z([\tau_n,\infty))}=\infty.
\end{align*}
The claim follows by applying Lemma \ref{Palais Smale} to $(u_c(\tau_n))_n$.
\end{proof}

We end this section by establishing some useful properties of the minimal blow-up solution $u_c$.

\begin{lemma}\label{lemma property of uc}
Let $u_c$ be the minimal blow-up solution given by Lemma \ref{category 0 and 1}. Then
\begin{itemize}
\item[(i)] There exists a center function $x:\R\to \R^3$ such that for each $\vare>0$ there exists $R>0$ such that
\begin{align}
\int_{|x+x(t)|\geq R}|\nabla_{x,y} u_c(t)|^2+|u_c(t)|^2+|u_c(t)|^{4}\,dxdy\leq\vare\quad\forall\,t\in\R.
\end{align}

\item[(ii)]There exists some $\delta>0$ such that $\inf_{t\in\R}\mK(u_c(t))=\delta$.
\end{itemize}
\end{lemma}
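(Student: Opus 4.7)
Both parts will be straightforward corollaries of the almost periodicity of $u_c$ modulo $\R^3_x$-translations obtained in Lemma \ref{category 0 and 1}, combined with the strict inequality $\mH(u_c(t))<m_{\mM(u_c(t))}$ (which persists along the flow by Lemma \ref{invariance from mA}) and the uniform variational gap $m_{\mM(u_c)}-\mH(u_c)\gtrsim_{\mD^*}1$ from Lemma \ref{cnls killip visan curve}(iii).

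For part (i), I would unpack the definition of almost periodicity: it furnishes a center function $x:\R\to\R^3$ for which the orbit $\mathcal{K}:=\{u_c(t,\cdot+x(t),\cdot):t\in\R\}$ is precompact in $H^1_{x,y}$. Given $\vare>0$, a finite $\vare$-net $v_1,\ldots,v_N\in\mathcal{K}$ in the $H^1$-norm, together with individual tail radii $R_j$ for each $v_j$ (chosen so that the $H^1$-tail and the $L^4$-tail over $\{|x|\geq R_j\}$ are both at most $\vare/2$), yields via the triangle inequality and the continuous embedding $H^1_{x,y}\hookrightarrow L^4_{x,y}$ a uniform radius $R:=\max_j R_j$ that works for every element of $\mathcal{K}$. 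Undoing the translation $x\mapsto x-x(t)$ then delivers (i).

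For part (ii), the plan is a one-step contradiction. Suppose $\inf_{t\in\R}\mK(u_c(t))=0$ and select $t_n\in\R$ with $\mK(u_c(t_n))\to 0$. Almost periodicity provides, after passing to a subsequence and translating by suitable $x_n\in\R^3$, strong $H^1_{x,y}$-convergence $u_c(t_n,\cdot+x_n,\cdot)\to v$. Passing to the limit using conservation of mass and energy together with the continuity of $\mK$ under strong $H^1$-convergence forces
\begin{equation*}
\mM(v)=\mM(u_c),\qquad\mH(v)=\mH(u_c),\qquad\mK(v)=0.
\end{equation*}
Since $\mM(u_c)>0$ the limit $v$ is nontrivial, so $v\in V(\mM(u_c))$, and the definition of $m_{\mM(u_c)}$ then gives $m_{\mM(u_c)}\leq\mH(v)=\mH(u_c)$, contradicting $u_c\in\mA$.

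I do not foresee any serious obstacle. The mildly delicate point worth flagging is the continuity of $\mK$ under strong $H^1_{x,y}$-convergence when $d=3$: the exponent $2+4/(d-1)=4$ is precisely the Sobolev-critical one on $\R^3\times\T$, but continuity of the $L^4$-norm only requires the continuous (not compact) embedding $H^1_{x,y}\hookrightarrow L^4_{x,y}$, so it still applies and the argument closes.
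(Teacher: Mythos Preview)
Your proposal is correct and follows essentially the same approach as the paper: part (i) via precompactness of the translated orbit plus the continuous embedding $H^1_{x,y}\hookrightarrow L^4_{x,y}$, and part (ii) via a contradiction extracting a strong $H^1$-limit $v$ with $\mK(v)=0$. The only cosmetic difference is that the paper phrases the contradiction in (ii) through the MEI-functional (observing $\mD(v)=\mD^*\in(0,\infty)$ forces $v\in\mA$ by Lemma \ref{cnls killip visan curve}(i), contradicting $\mK(v)=0$), whereas you argue directly from the definition of $m_c$; both routes are equivalent.
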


\begin{proof}
(i) is an immediate consequence of the inhomogeneous Gagliardo-Nirenberg inequality on $\R^3\times\T$ and the almost periodicity of $u_c$ in $H_{x,y}^1$. We follow the same line as in the proof of \cite[Prop. 10.3]{killip_visan_soliton} to show (ii). Assume the contrary that the claim does not hold. Then we can find a sequence $(t_n)_n\subset\R$ such that $\mK(u_c(t_n))\to 0$. By the almost periodicity of $u_c$ in $H_{x,y}^1$ we can find some $v_0$ such that $u(t_n)$ converges strongly to $v_0$ in $H_{x,y}^1$ (modulo $x$-translations). Combining with the continuity of $\mK$ in $H_{x,y}^1$ we infer that
\begin{align*}
\mD(v_0)&=\mD(u_c)=\mD^*\in(0,\infty),\\
\mK(v_0)&=\lim_{n\to\infty}\mK(u_c(t_n))=0.
\end{align*}
This is however impossible due to Lemma \ref{cnls killip visan curve} (i).
\end{proof}

\subsection{Extinction of the minimal blow-up solution}
In this subsection we close the proof of Theorem \ref{main thm} by showing the contradiction that the minimal blow-up solution $u_c$ must be equal to zero. We shall firstly record a key result concerning the growth of the center function $x(t)$ given in Lemma \ref{lemma property of uc}. The proof of the result relies on the conservation of momentum and was initially given in \cite{non_radial}. Similar results under the framework of MEI-functional and in the setting of NLS on a waveguide were proved in \cite{killip_visan_soliton} and \cite{HaniPausader} respectively. The proof in our case is a straightforward combination of the arguments in \cite{non_radial,killip_visan_soliton,HaniPausader} and we thus omit the details here.

\begin{lemma}\label{holmer}
Let $x(t)$ be the center function given by Lemma \ref{lemma property of uc}. Then $x(t)$ obeys the decay condition $x(t)=o(t)$ as $|t|\to\infty$.
\end{lemma}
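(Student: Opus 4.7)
The approach is the truncated center-of-mass argument of Duyckaerts-Holmer-Roudenko \cite{non_radial}, adapted to the waveguide. A preliminary reduction is the vanishing of the $x$-momentum $P(u_c) := \operatorname{Im} \int_{\R^3\times\T} \bar{u}_c \nabla_x u_c \, dx\,dy$. This is established by a standard minimality-plus-Galilean argument: since \eqref{nls} is invariant under Galilean boosts in the $\R^3_x$-directions, replacing $u_c(t,x,y)$ by $v(t,x,y) := e^{i x\cdot \xi_0 - it|\xi_0|^2} u_c(t, x - 2t\xi_0, y)$ with $\xi_0 = -\eta P(u_c)/\mM(u_c)$ and $\eta\in(0,2)$ small produces another solution with $\mM(v) = \mM(u_c)$ but $\mH(v) = \mH(u_c) - (2\eta - \eta^2)|P(u_c)|^2/(2\mM(u_c)) < \mH(u_c)$ and $\mK(v)$ still positive, hence $v \in \mA$ with $\mD(v) < \mD(u_c) = \mD^*$. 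Since the Galilean boost preserves the scattering/non-scattering dichotomy, $v$ also fails to scatter, contradicting the definition of $\mD^*$ unless $P(u_c) = 0$. We therefore work under the normalization $P(u_c) = 0$.

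Next, fix a cutoff $\phi \in C_c^\infty(\R^3;[0,1])$ with $\phi \equiv 1$ on $\{|x|\leq 1\}$ and $\operatorname{supp}\phi \subset \{|x|\leq 2\}$, and define the truncated first moment
$$X_R(t) := \int_{\R^3 \times \T} \phi(x/R)\, x\, |u_c(t,x,y)|^2 \, dx \, dy, \quad R > 0.$$
Direct use of \eqref{nls} yields the component-wise identity
$$\frac{d}{dt} X_R(t)_j = 2\operatorname{Im}\int \phi(x/R)\, \bar{u}_c \pt_{x_j} u_c \, dx\,dy + 2\operatorname{Im}\int \frac{x_j}{R}(\nabla\phi)(x/R)\cdot \nabla_x u_c\, \bar{u}_c \, dx \, dy.$$
Using $P(u_c) = 0$ together with the $H^1$-almost-periodicity modulo $\R^3_x$-translations of Lemma \ref{lemma property of uc}(i), for any $\eta > 0$ one obtains $R_0 = R_0(\eta)$ such that both terms above are of magnitude $O(\eta)$ whenever $R \geq R_0 + |x(t)|$. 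Integrating in $t$ gives
$$|X_R(t_2) - X_R(t_1)| \leq C\eta\, |t_2 - t_1| \quad \text{provided} \quad R \geq R_0 + \sup_{\tau \in [t_1,t_2]} |x(\tau)|.$$

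To convert this into an estimate on $x(t)$ itself, split $X_R(t) = x(t) \int \phi(x/R) |u_c|^2 \, dx\,dy + \int \phi(x/R) (x-x(t)) |u_c|^2 \, dx\,dy$. The almost-periodicity forces the first factor to equal $\mM(u_c) + O(\eta)$ and the second integral to split into a bulk piece of size $O(R_0)$ centered near $x(t)$ plus a tail of size $O(R\eta)$; hence $X_R(t) = x(t)\,\mM(u_c) + O(R_0 + R\eta)$. Combining with the previous bound produces the master inequality
$$\mM(u_c)\,|x(t_2) - x(t_1)| \leq C\eta\,|t_2 - t_1| + CR_0(\eta) + C\eta \sup_{\tau \in [t_1,t_2]} |x(\tau)|.$$
Arguing by contradiction, suppose there exist $\epsilon_0 > 0$ and $t_n \to +\infty$ with $|x(t_n)| \geq \epsilon_0 t_n$. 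Apply the master inequality with $t_1 = 0$, $t_2 = \tau_n^\ast$, where $\tau_n^\ast \in [0, t_n]$ is chosen so $|x(\tau_n^\ast)| = \sup_{[0,t_n]} |x(\tau)|$, and pick $\eta > 0$ with $C\eta \leq \mM(u_c)/2$. The final term on the right can then be absorbed into the left, yielding $\sup_{[0,t_n]}|x(\tau)| \leq (2C\eta/\mM(u_c))\, t_n + O_\eta(1)$, and in particular $|x(t_n)| \leq (2C\eta/\mM(u_c))\, t_n + O_\eta(1)$. Choosing $\eta$ small enough that $2C\eta/\mM(u_c) < \epsilon_0/2$ contradicts $|x(t_n)| \geq \epsilon_0 t_n$ for large $n$; the case $t\to -\infty$ is identical.

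The main technical delicacy lies in the error bookkeeping in the derivative identity for $X_R$: all implicit constants must depend only on $\eta$ and on the compactness modulus of Lemma \ref{lemma property of uc}(i), and not on $t$, $R$, or $|x(t)|$, so that the self-absorbing bootstrap in the last step closes. A secondary subtlety is the preliminary vanishing of $P(u_c)$, which requires verifying that the boosted solution $v$ remains in $\mA$ (so that $\mD(v)$ is well-defined and finite) and that the Galilean boost preserves the scattering dichotomy; both are standard but deserve care in the waveguide context because only boosts along the $\R^3_x$-factor are available.
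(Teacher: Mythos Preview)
Your proof is correct and follows precisely the approach the paper itself invokes but does not spell out---the truncated center-of-mass argument of \cite{non_radial} combined with the Galilean/MEI minimality reduction of \cite{killip_visan_soliton,HaniPausader} to force $P(u_c)=0$. One cosmetic point: the paper's Lemma~\ref{lemma property of uc}(i) places the bulk of $u_c$ near $-x(t)$ (the condition reads $|x+x(t)|\geq R$), so your splitting of $X_R$ should be centered at $-x(t)$ rather than $x(t)$, but this only flips a sign and leaves the argument intact.
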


We are now ready to prove Theorem \ref{main thm}.

\begin{proof}[Proof of Theorem \ref{main thm}]
We aim to prove the contradiction that the minimal blow-up solution $u_c$ given by Lemma \ref{category 0 and 1} is equal to zero. Let $\chi:\R^3\to\R$ be a smooth radial cut-off function satisfying
\begin{align*}
\chi=\left\{
             \begin{array}{ll}
             |x|^2,&\text{if $|x|\leq 1$},\\
             0,&\text{if $|x|\geq 2$}.
             \end{array}
\right.
\end{align*}
For $R>0$, we define the local virial action $z_R(t)$ by
\begin{align*}
z_{R}(t):=\int R^2\chi\bg(\frac{x}{R}\bg)|u_c(t,x,y)|^2\,dxdy.
\end{align*}
Direct calculation yields
\begin{align}
\pt_t z_R(t)=&\,2\,\mathrm{Im}\int R\nabla_x \chi\bg(\frac{x}{R}\bg)\cdot\nabla_x u_c(t)\bar{u}_c(t)\,dxdy,\label{final4}\\
\pt_t^2 z_R(t)=&\,4\int \pt^2_{x_j x_k}\chi\bg(\frac{x}{R}\bg)\pt_{x_j} u_c\pt_{x_k}\bar{u}_c\,dxdy-\frac{1}{R^2}\int\Delta_x^2\chi\bg(\frac{x}{R}\bg)|u_c|^2\,dxdy\nonumber\\
-&\,\int\Delta_x\chi\bg(\frac{x}{R}\bg)|u_c|^{4}\,dxdy.
\end{align}
We then obtain
\begin{align*}
\pt_t^2 z_R(t)=8\mK(u_c(t))+A_R(u_c(t)),
\end{align*}
where
\begin{align*}
A_R(u_c(t))=&\,4\int\bg(\pt^2_{x_j}\chi\bg(\frac{x}{R}\bg)-2\bg)|\pt_{x_j} u_c|^2\,dxdy+4\sum_{j\neq k}\int_{R\leq|x|\leq 2R}\pt_{x_j}\pt_{x_k}\chi\bg(\frac{x}{R}\bg)\pt_{x_j} u_c\pt_{x_k}\bar{u}_c\,dxdy\nonumber\\
-&\,\frac{1}{R^2}\int\Delta_x^2\chi\bg(\frac{x}{R}\bg)|u_c|^2\,dxdy
-\int\bg(\Delta_x\chi\bg(\frac{x}{R}\bg)-6\bg)|u_c|^{4}\,dxdy.
\end{align*}
We have the rough estimate
\begin{align*}
|A_R(u(t))|\leq C_1\int_{|x|\geq R}|\nabla_x u_c(t)|^2+\frac{1}{R^2}|u_c(t)|^2+|u_c(t)|^{4}\,dxdy
\end{align*}
for some $C_1>0$. By Lemma \ref{lemma property of uc} we know that there exists some $\delta>0$ such that
\begin{align}\label{small extinction ff}
\inf_{t\in\R}(8\mK(u_c(t)))\geq 8\delta=:2\eta_1>0.
\end{align}
From Lemma \ref{lemma property of uc} it also follows that there exists some $R_0\geq 1$ such that
\begin{align*}
\int_{|x+x(t)|\geq R_0}|\nabla_{x,y} u_c(t)|^2+|u_c(t)|^2+|u_c(t)|^{4}\,dxdy\leq \frac{\eta_1}{C_1}.
\end{align*}
Thus for any $R\geq R_0+\sup_{t\in[t_0,t_1]}|x(t)|$ with some to be determined $t_0,t_1\in[0,\infty)$, we have
\begin{align}\label{final3}
\pt_t^2 z_R(t)\geq \eta_1
\end{align}
for all $t\in[t_0,t_1]$. By Lemma \ref{holmer} we know that for any $\eta_2>0$ there exists some $t_0\gg 1$ such that $|x(t)|\leq\eta_2 t$ for all $t\geq t_0$. Now set $R=R_0+\eta_2 t_1$. Integrating \eqref{final3} over $[t_0,t_1]$ yields
\begin{align}\label{12}
\pt_t z_R(t_1)-\pt_t z_R(t_0)\geq \eta_1 (t_1-t_0).
\end{align}
Using \eqref{final4}, Cauchy-Schwarz and Lemma \ref{cnls killip visan curve} we have
\begin{align}\label{13}
|\pt_t z_{R}(t)|\leq C_2 \mD^*R= C_2 \mD^*(R_0+\eta_2 t_1)
\end{align}
for some $C_2=C_2(\mD^*)>0$. \eqref{12} and \eqref{13} give us
\begin{align*}
2C_2 \mD^*(R_0+\eta_2 t_1)\geq\eta_1 (t_1-t_0).
\end{align*}
Setting $\eta_2=\frac{\eta_1}{4C_2\mD^*}$, dividing both sides by $t_1$ and then sending $t_1$ to infinity we obtain $\frac{1}{2}\eta_1\geq\eta_1$, which implies $\eta_1\leq 0$, a contradiction. This completes the proof.
\end{proof}

\subsection{Finite time blow-up below ground states}
In the final subsection we give the proof of Theorem \ref{thm blow up}.

\begin{proof}[Proof of Theorem \ref{thm blow up}]
The proof makes use of the classical Glassey's virial arguments \cite{Glassey1977}. In the context of normalized ground states, we shall invoke the same idea from the proof of \cite[Thm. 1.5]{BellazziniJeanjean2016} to show the claim. We firstly prove the following statement: for $\phi\in H_{x,y}^1$ satisfying $\mH(\phi)<m_{\mM(\phi)}$ and $\mK(\phi)<0$ one has
\begin{align}\label{energy trapping}
\mK(\phi)\leq \mH(\phi)-m_{\mM(\phi)}.
\end{align}
Indeed, from Lemma \ref{monotoneproperty} we know that there exists some $t^*\in(0,1)$ such that $\mK(\phi^{t^*})=0$ and $\frac{d}{ds}(\mH(\phi^{s}))(s)\geq \frac{d}{ds}(\mH(\phi^{s}))(1)=\mK(\phi)$ for $s\in(t^*,1)$. Then
\begin{align*}
\mH(\phi)&=\mH(\phi^1)=\mH(\phi^{t^*})+\int_{t^*}^1\frac{d}{ds}(\mH(\phi^{s}))(s)\,ds\geq\mH(\phi^{t^*})+(1-t^*)\frac{d}{ds}(\mH(\phi^{s}))(1)\nonumber\\
&=\mH(\phi^{t^*})+(1-t^*)\mK(\phi)> m_{\mM(\phi)}+\mK(\phi),
\end{align*}
which implies \eqref{energy trapping}. Next, using the same arguments as in the proof of Lemma \ref{invariance from mA} we infer that $\mK(u(t))<0$ for all $t$ in the maximal lifespan of $u$, thus also $\mK(u(t))\leq m_{\mM(u)}-\mH(u)$. We now define
\begin{align*}
V(t):=\int |x|^2|u(t)|^2\,dxdy.
\end{align*}
By using the same approximation arguments as in the proof of \cite[Prop. 6.5.1]{Cazenave2003} we know that $|x|u(t)\in L_{x,y}^2$ for all $t$ in the maximal lifespan of $u$. Direct calculation (which is similar to the one given in the proof of Theorem \ref{main thm}) yields
\begin{align*}
\pt_{t}^2V(t)=8\mK(u(t))\leq 8(m_{\mM(u)}-\mH(u))<0.
\end{align*}
This particularly implies that $t\mapsto V(t)$ is a positive and concave function simultaneously. Hence the function $t\mapsto V(t)$ can not exist for all $t\in\R$ and the desired claim follows.
\end{proof}

\subsubsection*{Acknowledgements}
The author acknowledges the funding by Deutsche Forschungsgemeinschaft (DFG) through the Priority Programme SPP-1886 (No. NE 21382-1). The author is also grateful to Zehua Zhao for some stimulating discussions.

\addcontentsline{toc}{section}{References}

\end{document}